\newtheorem{theorem}{Theorem}[section]
\newtheorem{proposition}[theorem]{Proposition}
\newtheorem{lemma}[theorem]{Lemma}
\newtheorem{corollary}[theorem]{Corollary}
\newtheorem{remark}[theorem]{Remark}
\newcommand{\sezione}[1]{\section{#1}\setcounter{equation}{0}}
\newcommand{\nor}{\Arrowvert}
\def\R{{\rm I\mskip -3.5mu R}}
\def\N{{\rm I\mskip -3.5mu N}}
\def\e{\varepsilon}
\def\di12{\mathcal{D}^{1,2}(\R^n)}
\def\na{\nabla}
\def\d{\delta}
\def\g{\gamma}
\def\D{\Delta}
\def\l{{\lambda}}
\def\L{{\Lambda}}
\def\a{{\alpha}}
\def\b{{\beta}}
\def\0l{_{0,\l}}
\def\1l{_{1,\l}}
\def\2l{_{2,\l}}
\def\3l{_{3,\l}}
\def\4l{_{4,\l}}
\def\de{\partial}
\def\Om{\Omega}
\def\h{\mathcal{H}}
\def\mg{\mathcal{G}}
\begin{document}
\title[On the Hardy-Sobolev]{On the Hardy-Sobolev equation}

\thanks{The first author is partially supported by the Australian Research Council. The second author is supported by Gruppo Nazionale per
l'Analisi Matematica, la Probabilit\'a e le loro Applicazioni (GNAMPA)
of the Istituto Nazionale di Alta Matematica (INdAM). The last two authors are supported by PRIN-2012-grant ``Variational and perturbative aspects of nonlinear differential problems''.}
\author[Dancer]{E. N. Dancer}
\address{School of Mathematics and Statistics F07 University of Sydney NSW 2006 Australia and Department of Mathematics, University of Swansea, Swansea, UK.}
\email{norman.dancer@sydney.edu.au }
\author[Gladiali]{F.  Gladiali}
\address{Dipartimento Polcoming-Matematica e Fisica, Universit\`a  di Sassari  - Via Piandanna 4, 07100 Sassari - Italy.}
\email{fgladiali@uniss.it}
\author[Grossi]{M. Grossi}
\address{Dipartimento di Matematica, Sapienza Universit\`a di Roma, P.le A. Moro 2 - 00185 Roma- Italy.}
\email{massimo.grossi@uniroma1.it}

\begin{abstract}
In this paper we study the problem
\begin{equation}\tag{1}
\left\{\begin{array}{ll}
-\Delta u-\frac {\l}{|x|^2}u=u^p & \hbox{ in }\Omega\\
u\geq 0& \hbox{ in }\Omega
\end{array}\right.
\end{equation}
where $\Omega=\R^N$ or $\Omega=B_1$, $N\geq 3$, $p>1$ and $\l< \frac{(N-2)^2}4$. Using a suitable map we transform the problem $(1)$
into a another one without the singularity $\frac1{|x|^2}$. Then we obtain some bifurcation results from the radial solutions corresponding to some explicit values of $\l$.  
\end{abstract}
\maketitle
\sezione{Introduction, statement of the main results and idea of the proofs.}\label{s0}
In this paper we consider the following problem
\begin{equation}\label{first}
\left\{\begin{array}{ll}
-\Delta u-\frac {\l}{|x|^2}u=u^p & \hbox{ in }\Omega\\
u\geq 0& \hbox{ in }\Omega
\end{array}\right.
\end{equation}
where $ \Omega\subseteq \R^N$, $N\geq 3$, $p>1$ and $\l< \frac{(N-2)^2}4$.
We will focus on the case where either $\Omega=\R^N$ or $\Omega=B_1$ and $p$ suitably chosen.  By solutions we meen weak solutions, so we will ask that $u\in D^{1,2}\left(\R^N\right)$ where $D^{1,2}\left(\R^N\right)=\{u\in L^{2^*}(\R^N)\hbox{ such that } |\na u| \in L^2(\R^N)\}$ in the first case, or $u\in H^1_0(B_1)$ in the case of the ball.
These problems were very studied in the pasts years using both variational or moving plane methods or the finite dimensional reduction of Lyapunov-Schmidt.


In this paper we follow a different approach that will allow us to obtain, among other results,  richer multiplicity results of solutions. The main ingredient of our proofs is given by the following 
map,
$$
{\mathcal L}_p:C(0,+\infty)\rightarrow C(0,+\infty)
$$
defined as
\begin{equation}\label{i1}
v(r)={\mathcal L}_p(u(r))=r^a u\left(r^b\right)\,\hbox{ for }r>0,\ p>1
\end{equation}
with
\begin{equation}\label{k}
a=2\frac{(N-2)(1-\nu_\l)}
{(p-1)(N-2)(\nu_\l-1)+4},
\end{equation}
and
\begin{equation}\label{alpha}
b=\frac4{(p-1)(N-2)(\nu_\l-1)+4}.
\end{equation}
where
\begin{equation}\label{ni-l}
\nu_\l=\sqrt{1-\frac {4\l}{(N-2)^2}}.
\end{equation}
Let $(0,T)\subset\R$ be an interval ($T=+\infty$ is allowed) and ${\mathcal D}^{1,2}((0,T),r^{N-1}dr)=\left\{u:(0,T)\rightarrow\R\hbox{ such that }\int_0^T
|u'|^2r^{N-1}dr
<+\infty\right\}$. The next proposition highlights the main properties of the operator ${\mathcal L}_p$.
\begin{proposition}\label{i2}
Let $p>1$, $\l<\frac{(N-2)^2}4$ and $u$ be a function satisfying
\begin{equation}\label{i3}
-u''-\frac{N-1}ru'-\frac {\l}{r^2}u=u^p\quad\hbox{in }(0,T)
\end{equation}
with $T\in(0,+\infty]$.
Then,  we have that  $v(r)={\mathcal L}_p(u(r))$ satisfies
\begin{equation}\label{i4}
-v''-\frac{M-1}rv'=A(\l,p)v^p\quad\hbox{in }(0,T^{\frac 1b})
\end{equation}
where
\begin{equation}\label{i6}
M-1=\frac{(p+3)(N-2)(\nu_\l-1)+4(N-1)}{(p-1)(N-2)(\nu_\l-1)+4}
\end{equation}
and
\begin{equation}\label{i5}
A(\l,p)=b^2=\left(\frac4{(p-1)(N-2)(\nu_\l-1)+4}\right)^2.
\end{equation}
If we choose
\begin{equation}\label{i1a}
T=+\infty\hbox{ when }p=\frac {N+2}{N-2}
\end{equation}
or
\begin{equation}\label{i1ba}
T=1\hbox{ when }1<p<\frac {N+2}{N-2},
\end{equation}
then we have that
\begin{equation}\label{i2a}
{\mathcal L}_p:{\mathcal D}^{1,2}((0,T),r^{N-1}dr)\rightarrow{\mathcal D}^{1,2}((0,T),r^{M-1}dr)
\end{equation}
and
\begin{equation}\label{i2b}
||{\mathcal L}_pu||_{{\mathcal D}^{1,2}((0,T),r^{M-1}dr)}=
\frac 1{\nu_\l}\int_0^T
\left(u'(s)^2-\frac\l{s^2}u^2(s)\right)s^{N-1}ds
\end{equation}
\end{proposition}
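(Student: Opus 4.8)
The plan is to obtain \eqref{i4} by a brute‑force substitution, reading off the formulas for $M$ and $A$ as precisely the conditions that kill all ``wrong'' terms, and then to deduce \eqref{i2a}--\eqref{i2b} from a change of variables in the Dirichlet integral.

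\textbf{Step 1: the transformed equation.} Write $s=r^{b}$ and differentiate $v(r)=r^{a}u(r^{b})$ twice by the chain rule, so that $v'(r)=a\,r^{a-1}u(s)+b\,r^{a+b-1}u'(s)$ and $v''(r)$ is a sum of three terms proportional to $r^{a-2}u(s)$, $r^{a+b-2}u'(s)$ and $r^{a+2b-2}u''(s)$. Inserting these into $-v''-\frac{M-1}{r}v'$ and collecting yields a linear combination of the same three monomials; now use \eqref{i3} in the form $u''(s)=-\frac{N-1}{s}u'(s)-\frac{\lambda}{s^{2}}u(s)-u(s)^{p}$ together with $\frac1s=r^{-b}$ to eliminate $u''(s)$. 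This produces the term $b^{2}r^{a+2b-2}u(s)^{p}$ plus extra multiples of $r^{a-2}u(s)$ and $r^{a+b-2}u'(s)$. The proof of \eqref{i4} then reduces to three elementary checks: (i) the coefficient of $r^{a+b-2}u'(s)$ vanishes, which is exactly the formula \eqref{i6} for $M$ (equivalently $2a+M-2=b(N-2)$); (ii) the coefficient of $r^{a-2}u(s)$ vanishes, which, using (i), reads $a(a+M-2)=b^{2}\lambda$, i.e.\ after inserting \eqref{k}--\eqref{alpha} the single identity $(N-2)^{2}(1-\nu_{\lambda}^{2})=4\lambda$, immediate from \eqref{ni-l}; (iii) $a+2b-2=ap$, which is precisely the relation \eqref{k} between $a$ and $b$, so the surviving nonlinear term equals $b^{2}r^{ap}u(s)^{p}=b^{2}v(r)^{p}$. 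Since $b^{2}=A(\lambda,p)$ by \eqref{i5} and $s=r^{b}$ runs over $(0,T)$ as $r$ runs over $(0,T^{1/b})$, this gives \eqref{i4}.

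\textbf{Step 2: the functional identity.} First note $b>0$ (for $1<p<\frac{N+2}{N-2}$ because $(p-1)(N-2)(1-\nu_{\lambda})<4$, while $b=\frac1{\nu_{\lambda}}$ in the critical case), so with the choices \eqref{i1a}--\eqref{i1ba} one has $T^{1/b}=T$ and the target interval is again $(0,T)$. To prove \eqref{i2b}, expand $(v'(r))^{2}$ from the formula above and substitute $s=r^{b}$, $dr=\frac1b\,s^{(1-b)/b}\,ds$ in $\int_{0}^{T^{1/b}}(v'(r))^{2}r^{M-1}\,dr$; using $2a+M-2=b(N-2)$ the three exponents collapse and one gets
\begin{equation*}
\int_{0}^{T}(v')^{2}r^{M-1}\,dr=\frac{a^{2}}{b}\int_{0}^{T}s^{N-3}u^{2}\,ds+2a\int_{0}^{T}s^{N-2}uu'\,ds+b\int_{0}^{T}s^{N-1}(u')^{2}\,ds .
\end{equation*}
Integrate the middle term by parts, $2a\int_{0}^{T}s^{N-2}uu'\,ds=a\int_{0}^{T}s^{N-2}(u^{2})'\,ds=-a(N-2)\int_{0}^{T}s^{N-3}u^{2}\,ds$, the boundary contributions vanishing because $u\in H^{1}_{0}(B_{1})$ when $T=1$ (resp.\ by the decay forced by finiteness of the Dirichlet integral when $T=+\infty$) and because of the behaviour of $u$ at $s=0$, which follows from the weighted Hardy inequality $\int_{0}^{T}u^{2}s^{N-3}\,ds\le C\int_{0}^{T}(u')^{2}s^{N-1}\,ds$ (valid since $N\ge3$) together with the asymptotics near the origin of solutions of \eqref{i3}. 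Finally the algebraic identity $\frac{a^{2}}{b}-a(N-2)=-\lambda b$ — which is once more just $a(a+M-2)=b^{2}\lambda$, i.e.\ \eqref{ni-l} — collapses the two $u$‑terms and leaves the multiple of $\int_{0}^{T}\bigl((u')^{2}-\frac{\lambda}{s^{2}}u^{2}\bigr)s^{N-1}\,ds$ asserted in \eqref{i2b}. In particular this right‑hand side is finite (the Hardy term being controlled as above), so $v=\mathcal{L}_{p}u\in\mathcal{D}^{1,2}((0,T),r^{M-1}dr)$, which proves \eqref{i2a}.

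\textbf{Main obstacle.} Conceptually the whole statement is driven by the single identity $(N-2)^{2}(1-\nu_{\lambda}^{2})=4\lambda$, so the actual difficulties are bookkeeping and endpoint analysis: keeping the rather heavy exponent/coefficient algebra straight when deriving \eqref{i4} and the displayed formula of Step 2, and — the genuinely analytic point — justifying rigorously that the boundary terms at $s=0$ and $s=T$ in the integration by parts vanish. The latter is not a feature of the transformation itself but of the function class considered, and is handled by Hardy's inequality together with the asymptotic behaviour of $u$ near $0$ (e.g.\ via sub/super‑solutions or standard ODE asymptotics).
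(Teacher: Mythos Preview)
Your argument is correct and follows the same approach as the paper: a direct substitution for \eqref{i4}, and for \eqref{i2b} the expansion of $(v')^2$, the change of variables $s=r^b$, and an integration by parts on the cross term. The only difference worth noting is in how the boundary contributions are disposed of: the paper extracts, from the finiteness of $\int_0^\infty |u|^{2N/(N-2)}s^{N-1}\,ds$ (Sobolev), explicit sequences $\delta_n\to0$ and $M_n\to\infty$ along which $u(s)s^{(N-2)/2}\to0$, whereas you invoke Hardy's inequality---which works just as well---together with ``the asymptotics near the origin of solutions of \eqref{i3}''; the latter is unnecessary, since \eqref{i2a}--\eqref{i2b} are statements about the operator $\mathcal L_p$ on the whole of $\mathcal D^{1,2}$ and should not rely on $u$ solving the equation.
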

The previous proposition establishes a one-to-one relationship between the radial solutions to \eqref{first} and the ODE \eqref{i4}. This allows us to find some old and new results about radial solutions to \eqref{first}.\\
On the other hand we stress that the map ${\mathcal L}_p$ will be used also to establish existence of $nonradial$ solutions.\\
In this paper we analyze two different situations: either 
\begin{equation}
p=\frac{N+2}{N-2} \hbox{ and }\Om=\R^N,
\end{equation}
or
\begin{equation}
1<p<\frac{N+2}{N-2} \hbox{ and }\Om=B_1.
\end{equation}
Let us start considering $p=\frac{N+2}{N-2}$ so that \eqref{first} becomes
\begin{equation}\label{1}
\left\{\begin{array}{ll}
-\Delta u-\frac {\l}{|x|^2}u=C(\l)u^\frac{N+2}{N-2} & \hbox{ in }\R^N\\
u\geq 0\\
u\in D^{1,2}\left(\R^N\right)
\end{array}\right.
\end{equation}
where $N\geq 3$, $D^{1,2}\left(\R^N\right)=\{u\in L^{2^*}(\R^N)\hbox{ such that } |\na u| \in L^2(\R^N)\}$ and $C(\l)=N(N-2)\left(1-\frac {4\l}{(N-2)^2}\right)$
(we have added the constant $C(\l)$ just to have a simpler expression of the explicit radial solutions).\\
Our starting point is the paper \cite{T} of Terracini.  For what concerns the radial case, Terracini shows that the unique radial solutions of \eqref{1} in $D^{1,2}\left(\R^N\right)$ are given by the functions
\begin{equation}\label{sol-rad}
u_{\d,\l}(r)=\frac {r^{\frac{N-2}2\left(\nu_\l-1\right)}\d^{\frac{N-2}2}}{\left(1+\d^{2}r^{2\nu_\l}\right)^\frac{N-2}2}
\end{equation}
with $\nu_\l$ 
as in \eqref{ni-l}.
Moreover she proved the 
following  result.
\begin{theorem}[Terracini]
Let $\l\in[0,\frac{2N}{N-2})$. Then problem \eqref{1} has a unique (up to rescaling) solution in $D^{1,2}\left(\R^N\right)$.
Moreover there exists $\l^*<0$ such that for $\l<\l^*$ problem \eqref{1} admits at least two positive solutions in $D^{1,2}\left(\R^N\right)$ which are distinct modulo rescaling. One is radial while the second is not.
\end{theorem}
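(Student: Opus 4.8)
The plan is to treat the two assertions separately, in each case handling the radial solutions through Proposition~\ref{i2}. For the uniqueness statement, first note that when $p=\frac{N+2}{N-2}$ one reads off from \eqref{i6} that $M=N$ and from \eqref{i5} that $A(\l,p)=b^2$ with $b=\nu_\l^{-1}$; hence by Proposition~\ref{i2} the map $v=\mathcal{L}_pu$ is a bijection — its inverse is of the same form as \eqref{i1}, and by \eqref{i2a}--\eqref{i2b} it preserves the relevant weighted spaces — between radial solutions of \eqref{1} in $\mathcal{D}^{1,2}(\R^N)$ and the $\mathcal{D}^{1,2}\big((0,\infty),r^{N-1}dr\big)$–solutions of $-v''-\frac{N-1}{r}v'=\nu_\l^{-2}v^{\frac{N+2}{N-2}}$ on $(0,\infty)$. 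A constant rescaling absorbs $\nu_\l^{-2}$, so this is the critical radial equation $-\D v=v^{\frac{N+2}{N-2}}$ on $\R^N$, whose finite–energy radial solutions are exactly the Aubin--Talenti bubbles (by a classical phase–plane/ODE argument, or by Caffarelli--Gidas--Spruck); pulling these back by $\mathcal{L}_p^{-1}$ returns precisely the family \eqref{sol-rad}, so for every admissible $\l$ problem \eqref{1} has a unique radial solution $u_{\d,\l}$ up to rescaling.

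To finish the uniqueness part I would show that for $0\le\l<\frac{2N}{N-2}$ every solution of \eqref{1} is radial, via the moving plane method applied to a Kelvin transform of $u$. The crucial point is that for $\l\ge0$ the inverse–square term behaves well under reflection across hyperplanes $\{x_1=t\}$, $t\ge0$: writing $x^t$ for the reflected point, one has $|x^t|>|x|$ when $x_1>t$, hence $\frac{\l}{|x^t|^2}\le\frac{\l}{|x|^2}$, so $u-u^t$ (with $u^t(x):=u(x^t)$) solves in $\{x_1>t\}$ a linear inequality in which the singular weight sits only on the reflected side; meanwhile the $\mathcal{D}^{1,2}$–membership forces, through the indicial equation of $-\D-\frac{\l}{|x|^2}$, the asymptotics $u\sim c_0|x|^{\frac{N-2}{2}(\nu_\l-1)}$ as $x\to0$ and $u\sim c_\infty|x|^{-\frac{N-2}{2}(\nu_\l+1)}$ as $|x|\to\infty$. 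It is exactly for $\l<\frac{2N}{N-2}$ that these exponents are strong enough to start the planes from infinity and to run the maximum principle near the singularity; sliding the planes down to $\{x_1=0\}$ and rotating the axis then forces $u=u_{\d,\l}$.

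For the multiplicity statement I would produce a second, nonradial solution variationally, the mechanism — visible via \eqref{i2b} — being that the radial solution becomes ``expensive'' as $\l\to-\infty$. Indeed \eqref{i2b} gives $\int_{\R^N}\big(|\na u_{\d,\l}|^2-\tfrac{\l}{|x|^2}u_{\d,\l}^2\big)=\o_{N-1}\nu_\l^{\,N-1}\kappa_N$ with $\kappa_N>0$ a dimensional constant, and combined with the Nehari identity and $C(\l)=N(N-2)\nu_\l^2$ this shows that the quotient $Q_\l(u):=\big(\int_{\R^N}|\na u|^2-\l\int_{\R^N}\tfrac{u^2}{|x|^2}\big)\big/\big(\int_{\R^N}|u|^{\frac{2N}{N-2}}\big)^{\frac{N-2}{N}}$ at $u_{\d,\l}$ equals a fixed constant times $\nu_\l^{\,2(2N-3)/N}$, hence grows like $|\l|^{\,2-3/N}$. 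Now fix a closed subgroup $G\subset O(N)$ whose only fixed point is $0$ and all of whose other orbits are positive–dimensional (for $N\ge4$ one may take $G=O(m)\times O(N-m)$, $2\le m\le N-2$) and minimize $Q_\l$ over $\mathcal{D}^{1,2}(\R^N)^G$. For a $G$–invariant minimizing sequence the only possible loss of compactness is a single bubble, which by $G$–invariance must concentrate at $0$, at the level $\mathfrak v(\l)=Q_\l(U)=S+|\l|c_U$ of an Aubin--Talenti bubble $U$ centred at $0$ (with $S$ the Sobolev constant and $\int U^2/|x|^2$ finite and scale invariant), which is only linear in $|\l|$. Since removing from $U$ a $G$–invariant ball of radius $r$ about $0$ changes the Dirichlet integral by $O(r^{N-2})$ but lowers the Hardy integral by $\text{const}\cdot r^{N-2}+o(r^{N-2})$, for $|\l|$ large there is a competitor with $Q_\l<\mathfrak v(\l)$; hence $S_\l^G:=\inf_{\mathcal{D}^{1,2}(\R^N)^G}Q_\l$ is attained, and a constant rescaling $u_\l$ of a nonnegative minimizer is — by elliptic regularity off $0$, the $\mathcal{D}^{1,2}$–framework at $0$, and the maximum principle for $-\D+\frac{|\l|}{|x|^2}$ — a positive solution of \eqref{1}. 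Finally $u_\l$ is not radial: $Q_\l(u_\l)\le\mathfrak v(\l)=O(|\l|)$, whereas the radial solution has $Q_\l(u_{\d,\l})\sim c\,|\l|^{\,2-3/N}$ with $2-3/N>1$ for $N\ge4$, so $u_\l\ne u_{\d,\l}$ for all $\d$ once $\l<\l^*$ for a suitable $\l^*<0$ (for $N=3$ one must instead compare the implied constants). This produces the two required positive solutions, one radial and one not.

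The genuinely hard step will be the symmetry one: the moving plane argument has to be pushed through with the inverse–square potential present, which demands sharp two–sided a priori asymptotics of $u$ at $0$ and at $\infty$ (this is where $\l<\frac{2N}{N-2}$ is used) together with a maximum principle valid up to the singularity, normally obtained after a preliminary Kelvin transform. The secondary difficulty is the compactness analysis for the $G$–constrained minimization — checking that the only escape level is $\mathfrak v(\l)$ and that $S_\l^G$ lies strictly below it. By contrast, the reduction of the radial classification to the critical equation is routine once Proposition~\ref{i2} is available.
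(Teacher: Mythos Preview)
The paper does not prove this theorem; it is quoted from Terracini \cite{T} and immediately followed by the remark that her arguments rest on the moving plane method (for uniqueness when $\l\ge0$) and a phase-plane analysis. The only piece the paper reproves is the classification of \emph{radial} solutions, via $\mathcal L_p$, in Corollary~\ref{C1} and Proposition~\ref{teo-2}; your first paragraph matches that exactly. For the full uniqueness statement your moving-plane sketch is in the spirit of what the paper attributes to Terracini, but there is no proof in the paper to compare against.

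Your multiplicity argument, however, rests on a computational error that makes the whole level comparison collapse. From \eqref{i9} (or directly from \eqref{i2b} together with the change of variables in the proof of Proposition~\ref{teo-2}) one has
\[
Q_\l(u_{\d,\l})=\Bigl(1-\tfrac{4\l}{(N-2)^2}\Bigr)^{\frac{N-1}{N}}S=\nu_\l^{\,2(N-1)/N}S,
\]
so as $\l\to-\infty$ this grows like $|\l|^{(N-1)/N}$, which is \emph{sublinear}. Your claimed exponent $2(2N-3)/N$ on $\nu_\l$ (growth $|\l|^{(2N-3)/N}$, superlinear for $N\ge4$) is wrong --- the numerator coming out of \eqref{i2b} is $\nu_\l$ times a constant, not $\nu_\l^{\,N-1}$ times one. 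With the correct exponent the radial level lies \emph{below} your bubble level $\mathfrak v(\l)=S+c_U|\l|$, so the radial solution is itself a $G$-invariant competitor beating $\mathfrak v(\l)$, and your argument gives no reason why the $G$-constrained minimizer should be nonradial. There is a second structural issue: because the Hardy term scales exactly like the Laplacian, a $G$-invariant sequence concentrating at the origin does not see a standard Aubin--Talenti bubble but a solution of the \emph{same} Hardy--Sobolev equation, so the identification of the unique escape level with $Q_\l(U)$ is not correct either. As the paper records in \eqref{i8}, Terracini's nonradial solution sits at a level below $k^{2/N}S$ for some integer $k$ with $k^{2/N}S<\nu_\l^{2(N-1)/N}S$; this points to a multi-bump/min-max construction rather than a direct symmetric minimization.
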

Another existence result was obtained some years later by Jin, Li and Xu (\cite{JLX}) where the authors proved the existence of singular solutions of the form $u(r,\theta)=r^\frac{2-N}2g(\theta)$, with $(r,\theta)$ polar coordinates in $\R^N$.\par
Finally, we recall a result by Musso and Wei (\cite{MW}) where it was proved the existence of infinitely many solutions for any $\lambda<0$. Note that {\em the energy }of this solutions, namely the quantity $E(u)=\frac12\int_{\R^N}\left(|\nabla u|^2-\frac {\lambda}{|x|^2}u^2\right)-\frac{N-2}{2N}\int_{\R^N}|u|^\frac{2N}{N-2}$ is large.

\noindent The results in \cite{T} are based on the moving plane method (when $\l>0$) and on the analysis of the radially symmetric case in the phase space.\\
Using the map ${\mathcal L}_{\frac{N+2}{N-2}}$ 
we give another proof of some results in \cite{T} in the radial case. In our opinion this approach is simpler. Actually, as showed in Proposition \eqref{i2}, since in this case we have that $M=N$ then
the map ${\mathcal L}_{\frac{N+2}{N-2}}$ reduces the study of the radial solutions of \eqref{1} to the well known problem,
\begin{equation}\label{eq-critico}
\left\{\begin{array}{ll}
-\Delta U=N(N-2)U^{2^*-1} & \hbox{ in }\R^N\\
U\geq 0\\
U\in D^{1,2}\left(\R^N\right).
\end{array}\right.
\end{equation}
Solutions of \eqref{eq-critico}
have been completely classified in  \cite{CGS}, where the authors proved that the solutions are given by
\begin{equation}\label{sol-critico-reg}
U_{\d}(r)=\frac {\d^{\frac{N-2}2}}{\left(1+\d^{2}r^2\right)^\frac{N-2}2}
\end{equation}
with $\delta>0$
and they are extremal functions for the well-known Sobolev inequality,
\begin{equation}\label{i3a}
\int_{\R^N}|\nabla u|^2\ge S\left(\int_{\R^N}|u|^\frac{2N}{N-2}\right)^\frac{N-2}N
\end{equation}
for $u \in D^{1,2}\left(\R^N\right)$ and $S$ the best Sobolev constant.\\
In this way we derive that the function $u_{\d,\l}$ in \eqref{sol-rad} are the unique radial solutions to \eqref{1}  (see Corollary \ref{C1}) and some inequalities involving the Hardy norm (see Proposition \ref{teo-2}) (these results were proved in Section 4 in \cite{T} using the phase plane).

As we pointed out, the role of map ${\mathcal L}_{\frac{N+2}{N-2}}$ is not restricted only to the radial case. Indeed it can be used to characterize {\em all}  solutions of the linearized problem at $u_{\d,\l}$, namely
\begin{equation}\label{lin}
\left\{\begin{array}{ll}
-\Delta v-\frac {\l}{|x|^2}v=N(N+2)\nu_\l ^2u_{\d,\l}^\frac4{N-2}v & \hbox{ in }\R^N\\
v\in D^{1,2}\left(\R^N\right)
\end{array}\right.
\end{equation}
Next result classifies the solution to \eqref{lin},
\begin{lemma}\label{lemma-lin}
Let $\l< \frac{(N-2)^2}4$ and 
\begin{equation}\label{lambda-j}
\l_j=\frac{(N-2)^2}4\left(1-\frac {j(N-2+j)}{N-1}\right),\ j\in\mathbb {N}.
\end{equation}
If $\l\neq \l_j$ then the space of solutions of \eqref{lin} (with $\d=1$) has dimension 1 and it is spanned by
\begin{equation}\label{Z}
Z_\l(x)= \frac{ |x|^{\frac{N-2}2\left(\nu_\l-1\right)}\left(1-|x|^{2\nu_\l}  \right)}{\left( 1+ |x|^{2\nu_\l}\right)^{\frac N2}}
\end{equation}
where $\nu_\l$ is as defined in \eqref{ni-l}.\\
If else $\l=\l_j$ for some $j=1,\dots$ then the space of solutions of \eqref{lin} (with $\d=1$) has dimension $1+\frac{(N+2j-2)(N+j-3)!}{(N-2)!\,j!} $ and it is spanned by
\begin{equation} \label{i13}
Z_{\l_j}(x) \,\, , \,\, Z_{j,i}(x)=\frac{|x|^{\frac N2\nu_{\l_j}- \frac{N-2}2} Y_{j,i}(x)}{ \left( 1+ |x|^{2\nu_{\l_j}}\right)^{\frac N2}  }
\end{equation}
where $\{Y_{j,i}\}$, ${i=1,\dots,\frac{(N+2j-2)(N+j-3)!}{(N-2)!j!}}$, form a basis of $\mathbb{Y}_j(\R^N)$, the space of all homogeneous harmonic polynomials of degree $j$ in $\R^N$.
\end{lemma}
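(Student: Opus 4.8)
The plan is to reduce \eqref{lin} to the (non-degenerate) linearized problem for the Aubin--Talenti bubble \eqref{sol-critico-reg}, one spherical harmonic mode at a time, by means of the operator ${\mathcal L}_{\frac{N+2}{N-2}}$. First I would expand a solution $v\in D^{1,2}(\R^N)$ of \eqref{lin} (with $\delta=1$) as $v(x)=\sum_{j\ge 0}\sum_i\phi_{j,i}(|x|)\,Y_{j,i}(x/|x|)$, where for each $j$ the $Y_{j,i}$ form an orthonormal basis of the degree-$j$ spherical harmonics on $S^{N-1}$, so $-\Delta_{S^{N-1}}Y_{j,i}=j(N-2+j)Y_{j,i}$. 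Projecting \eqref{lin} onto the $(j,i)$ component, the radial profile $\phi=\phi_{j,i}$ solves, on $(0,+\infty)$,
\begin{equation}\label{mode-j}
-\phi''-\frac{N-1}{r}\phi'+\frac{j(N-2+j)-\l}{r^2}\phi=N(N+2)\nu_\l^2\,u_{1,\l}^{\frac4{N-2}}\phi ,
\end{equation}
with finite Dirichlet (and Hardy) energy. Up to the angular term $j(N-2+j)/r^2$, this is an equation of the type \eqref{i3} to which Proposition \ref{i2} applies.

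Since ${\mathcal L}_{\frac{N+2}{N-2}}$ is linear, the next step is to set $\psi_{j,i}={\mathcal L}_{\frac{N+2}{N-2}}\phi_{j,i}$ and rerun the computation behind Proposition \ref{i2}, carrying the extra term along. One obtains that $\psi=\psi_{j,i}$ solves, on $(0,+\infty)$,
\begin{equation}\label{trans-j}
-\psi''-\frac{N-1}{s}\psi'+\frac{\mu_j}{s^2}\psi=N(N+2)\,U_1^{\frac4{N-2}}\psi,\qquad \mu_j:=\frac{j(N-2+j)}{\nu_\l^2},
\end{equation}
where $U_1$ is the bubble \eqref{sol-critico-reg} with $\delta=1$: if $\gamma=\frac{N-2}{2}(\nu_\l-1)$ denotes the exponent produced by the map, then $\gamma$ is exactly what makes the first-order coefficient equal $(N-1)/s$, and the identity $\gamma^2+(N-2)\gamma+\l=0$ is exactly what kills the $1/s^2$-term carried by $\l$, so that only the genuinely angular quantity $\mu_j$ survives. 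For $j=0$, $\mu_0=0$ and \eqref{trans-j} is the radial linearization of \eqref{eq-critico} at $U_1$, with one-dimensional solution space spanned by the dilation direction $\frac{1-s^2}{(1+s^2)^{N/2}}$, whose ${\mathcal L}_{\frac{N+2}{N-2}}$-preimage is precisely $Z_\l$ of \eqref{Z}. Moreover, by the norm identity \eqref{i2b} and its mode-$j$ analogue the map identifies the weighted space in which $\phi_{j,i}$ lives with the natural space for \eqref{trans-j}, so that $D^{1,2}(\R^N)$-solutions of \eqref{lin} correspond exactly to solutions of \eqref{trans-j} that behave like $s^{\rho_+(\mu_j)}$ at $0$, with $\rho_\pm(\mu)=\tfrac{1}{2}\bigl(-(N-2)\pm\sqrt{(N-2)^2+4\mu}\bigr)$, and decay at $\infty$; I call these the \emph{admissible} solutions.

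It remains to prove that \eqref{trans-j} admits a non-zero admissible solution if and only if $\mu\in\{0,N-1\}$ (the two values at which the right-hand side of \eqref{trans-j} is, respectively, the radial linearization and the degree-one linearization of the bubble). For $\mu=\ell(N-2+\ell)$, $\ell\in\mathbb{N}$, this is the classical non-degeneracy of the Aubin--Talenti bubble; for the (generally non-integer) values $\mu=\mu_j$ occurring here one argues on the ODE directly: reducing \eqref{trans-j} via $z=s^2$ to a Gauss hypergeometric equation, the local exponents at $z=0,-1,\infty$ are explicit in $\mu$ and $N$, and an admissible (regular at both ends) solution exists only when a hypergeometric parameter is a non-positive integer, which forces $\mu\in\{0,N-1\}$; alternatively, for $\mu>N-1$ the substitution $\psi=s^{\rho_+(\mu)}\chi$ turns \eqref{trans-j} into $-\Delta_{\widetilde N}\chi=N(N+2)(1+s^2)^{-2}\chi$ with $\widetilde N=2\rho_+(\mu)+N>N+2$, for which $(1+s^2)^{-(\widetilde N-2)/2}$ is a positive strict supersolution, so the kernel is trivial. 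Granting this, one assembles the count: the $j=0$ mode always contributes the one-dimensional space $\langle Z_\l\rangle$; a mode $j\ge1$ contributes exactly when $\mu_j=N-1$, i.e. $\nu_\l^2=\frac{j(N-2+j)}{N-1}$, which by \eqref{ni-l} is precisely $\l=\l_j$, and then each of the $\dim\mathbb{Y}_j(\R^N)=\frac{(N+2j-2)(N+j-3)!}{(N-2)!\,j!}$ harmonics $Y_{j,i}$ yields one admissible $\psi$ (a scalar multiple of the translation direction $\frac{s}{(1+s^2)^{N/2}}$), whose ${\mathcal L}_{\frac{N+2}{N-2}}$-preimage times $Y_{j,i}$ is the function $Z_{j,i}$ of \eqref{i13}; because $j\mapsto\mu_j$ is injective and $\mu_j>0$ for $j\ge1$, no two modes resonate at once. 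Summing $1$ and, when $\l=\l_j$, $\dim\mathbb{Y}_j(\R^N)$ yields the stated dimension.

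The hard part is exactly this last classification of \eqref{trans-j} for non-integer $\mu$: the standard non-degeneracy of the bubble only handles the integer modes, so the hypergeometric (or supersolution/Sturm) analysis is genuinely needed. A secondary but essential point is the bookkeeping of weighted spaces through the spherical harmonic decomposition and the map ${\mathcal L}_{\frac{N+2}{N-2}}$, to ensure that admissible solutions of \eqref{trans-j} are in exact correspondence with $D^{1,2}(\R^N)$-solutions of \eqref{lin}, and that the functions $Z_\l,Z_{j,i}$ so obtained indeed lie in $D^{1,2}(\R^N)$.
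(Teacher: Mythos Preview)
Your approach is essentially identical to the paper's: decompose in spherical harmonics, apply ${\mathcal L}_{\frac{N+2}{N-2}}$ mode by mode to reach the transformed radial equation $-\psi''-\frac{N-1}{s}\psi'+\frac{\mu}{s^2}\psi=\frac{N(N+2)}{(1+s^2)^2}\psi$, and then classify its admissible solutions. The only difference is that the paper treats the last step --- that nontrivial solutions exist only for $\mu\in\{0,N-1\}$ --- as a well-known fact about the linearized bubble, whereas you supply the extra hypergeometric/supersolution justification for the non-integer values of $\mu$ that arise.
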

A consequence of the previous result is the computation of the Morse index of $u_{\l}=u_{1,\l}$.
\begin{proposition}\label{cor-1}
Let $u_{\l}:=u_{1,\l}$ be the radial solution of \eqref{1}, then its Morse index $m(\l)$ is equal to
\begin{equation}\label{morse-index}
m(\l) = \sum_{0 \leq  j<\frac{2-N}2+\frac1 2 \sqrt{N^2-\frac {16(N-1) \l}{(N-2)^2}} \atop_{j\  integer} } \frac{(N+2j-2)(N+j-3)!}{(N-2)!\,j!}.
\end{equation}
In particular, we have that the Morse index of $u_{\l}$ changes as $\l$ crosses the values $\l_j$ and also that $m(\l) \to+\infty$ as $\l \to-\infty.$
\end{proposition}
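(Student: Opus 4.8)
The plan is to read off $m(\l)$ — the maximal dimension of a subspace of $D^{1,2}(\R^N)$ on which the quadratic form $Q_\l(v)=\int_{\R^N}\big(|\na v|^2-\frac{\l}{|x|^2}v^2-N(N+2)\nu_\l^2u_\l^{4/(N-2)}v^2\big)$ attached to \eqref{lin} is negative definite — by a spherical‑harmonic decomposition followed by the map $\mathcal L_{(N+2)/(N-2)}$. Expanding $v(x)=\sum_{j\ge0}\psi_j(|x|)Y_j(x/|x|)$, with $\{Y_j\}$ an $L^2(S^{N-1})$–orthonormal basis of $\mathbb Y_j(\R^N)$ and $-\Delta_{S^{N-1}}Y_j=\mu_jY_j$, $\mu_j=j(N-2+j)$, the form $Q_\l$ is block diagonal, so $m(\l)=\sum_{j\ge0}N_j\,m_j(\l)$, where $N_j=\dim\mathbb Y_j(\R^N)=\frac{(N+2j-2)(N+j-3)!}{(N-2)!\,j!}$ and $m_j(\l)$ is the Morse index of the weighted radial operator $L_{\l,j}=-\partial_{rr}-\frac{N-1}{r}\partial_r+\frac{\mu_j-\l}{r^2}-N(N+2)\nu_\l^2u_\l^{4/(N-2)}$ on $\mathcal D^{1,2}((0,\infty),r^{N-1}dr)$.

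Next I would push each $L_{\l,j}$ through $\mathcal L_{(N+2)/(N-2)}$, acting in the radial variable while keeping the angular factor $Y_j$ fixed. One first checks $\mathcal L_{(N+2)/(N-2)}(u_\l)=U_1$, and that the substitution $r\mapsto r^{1/\nu_\l}$ together with multiplication by $r^a$ converts the Hardy constant $\mu_j-\l$ into $\mu_j/\nu_\l^2$ — this is forced by matching the indicial roots of the two radial operators and using $(N-2)^2-4\l=(N-2)^2\nu_\l^2$. Then the isometry identity \eqref{i2b}, extended to carry along the angular term exactly as in Proposition \ref{i2}, shows that $\mathcal L_{(N+2)/(N-2)}$ carries $Q_{\l,j}$, up to the positive factor $1/\nu_\l$, onto the quadratic form of $\mathcal A_\kappa:=-\partial_{ss}-\frac{N-1}{s}\partial_s+\frac{\kappa}{s^2}-N(N+2)U_1^{4/(N-2)}$ on $\mathcal D^{1,2}((0,\infty),s^{N-1}ds)$, with $\kappa=\mu_j/\nu_\l^2$. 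Hence $m_j(\l)=n(\mu_j/\nu_\l^2)$, where $n(\kappa)$ denotes the Morse index of $\mathcal A_\kappa$.

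The crux is to compute $n(\kappa)$ for $\kappa\ge0$. The ground–state eigenvalue $\mu^{(1)}(\kappa)$ of $\mathcal A_\kappa$ is strictly increasing in $\kappa$. At $\kappa=0$ one has $\langle\mathcal A_0U_1,U_1\rangle=-2N\int_{\R^N}U_1^{2N/(N-2)}<0$, so $\mu^{(1)}(0)<0$; moreover the radial function $\frac{1-s^2}{(1+s^2)^{N/2}}$ (the radial part of $\partial_\delta U_\delta|_{\delta=1}$) lies in $\ker\mathcal A_0$ and changes sign exactly once, so by Sturm oscillation theory it is the second eigenfunction, whence $\mu^{(2)}(0)=0$. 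Since $\mu^{(2)}(\kappa)\ge\mu^{(2)}(0)=0$ for every $\kappa\ge0$, $\mathcal A_\kappa$ has at most one negative eigenvalue, and exactly one precisely when $\mu^{(1)}(\kappa)<0$. Finally the positive radial function $\frac{s}{(1+s^2)^{N/2}}$ (the radial part of the translation kernel $\partial_{x_i}U_1$) lies in $\ker\mathcal A_{N-1}$, hence is the ground state there and $\mu^{(1)}(N-1)=0$; by monotonicity $\mu^{(1)}(\kappa)<0$ for $\kappa<N-1$ and $\mu^{(1)}(\kappa)>0$ for $\kappa>N-1$. Therefore $n(\kappa)=1$ for $0\le\kappa<N-1$ and $n(\kappa)=0$ for $\kappa\ge N-1$.

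Assembling the pieces, $m_j(\l)=1$ if and only if $\mu_j/\nu_\l^2<N-1$, i.e. $j(N-2+j)<(N-1)\nu_\l^2$; completing the square and using $\nu_\l^2=1-\frac{4\l}{(N-2)^2}$, this is exactly the condition $0\le j<\frac{2-N}{2}+\frac12\sqrt{N^2-\frac{16(N-1)\l}{(N-2)^2}}$ (equivalently $\l<\l_j$), and $m_j(\l)=0$ otherwise. Summing $N_j$ over such $j$ yields \eqref{morse-index}. Since $\nu_\l\to+\infty$ as $\l\to-\infty$ the set of admissible $j$ is unbounded, so $m(\l)\to+\infty$; and as $\l$ decreases through $\l_j$ the $j$–th block switches on, so $m(\l)$ jumps there by $N_j$. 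The delicate points I expect are the functional‑analytic justification of the modal decomposition of the Morse index on $D^{1,2}(\R^N)$ — where $0$ belongs to the essential spectrum, so everything must be phrased through the quadratic form, and one must know a priori that only finitely many $j$ contribute (which the computation of $n(\kappa)$ confirms) — and verifying that the angular extension of Proposition \ref{i2} indeed turns $\mu_j-\l$ into $\mu_j/\nu_\l^2$ while preserving the relevant weighted $\mathcal D^{1,2}$ spaces.
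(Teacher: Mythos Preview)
Your argument is correct and follows essentially the same route as the paper: spherical-harmonic decomposition followed by the radial change of variables $\mathcal L_{(N+2)/(N-2)}$, reducing everything to the well-known spectral picture of the linearized critical equation at the standard bubble $U_1$.

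The one organizational difference worth noting is how the counting is packaged. The paper does not work directly with the quadratic form $Q_\l$; instead it invokes Corollary~\ref{d12-mi} to identify $m(\l)$ with the number of negative \emph{weighted} eigenvalues $\L_i$ of
\[
-\Delta w-\frac{\l}{|x|^2}w-N(N+2)\nu_\l^2\,u_\l^{4/(N-2)}w=\frac{\L_i}{|x|^2}\,w,\qquad w\in D^{1,2}(\R^N),
\]
and then observes that after decomposition and $\mathcal L_p$ this becomes the weighted problem $\mathcal A_0\eta=\frac{\nu}{r^2}\eta$ with $\nu=(\L_i-\mu_j)/\nu_\l^2$, whose unique negative eigenvalue is $1-N$. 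This yields $\L_i=\nu_\l^2(1-N)+\mu_j$, hence $\L_i<0$ iff $\mu_j<(N-1)\nu_\l^2$, exactly your condition. The weighted formulation buys the paper a clean statement (``one negative weighted eigenvalue, equal to $1-N$'') that is invariant under the transformation and sidesteps the essential-spectrum issues you flag; your approach, by contrast, gives a self-contained explanation of \emph{why} that number is $1-N$ (via $\partial_\delta U_\delta$, $\partial_{x_i}U_1$, and Sturm oscillation), at the cost of having to justify min--max for the singular radial operators $\mathcal A_\kappa$ directly. Both lead to the same inequality $j(N-2+j)<(N-1)\nu_\l^2$ and hence to \eqref{morse-index}.
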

Next aim is to obtain multiplicity results of nonradial solutions as $\l<0$ bifurcating from the radial solution $u_{\l}:=u_{1,\l}$ in \eqref{sol-rad}. 

For any $h\in\N$ we let $O(h)$ be the orthogonal group in 
$\R^h$. Our main result for problem \eqref{1} is the following (see \eqref{X} for the definition of the space $X$). 
\begin{theorem}\label{bif}
Let us fix $j\in \N$ and let $\l_j$ as in \eqref{lambda-j}. Then\\
i) If $j$ is odd there exists at least a continuum of nonradial weak solutions to \eqref{1}, invariant with respect to $O(N-1)$, bifurcating from the pair $(\l_j,u_{\l_j})$ in $(-\infty,0)\times X$.\\
ii) If $j$ is even there exist at least $\big[\frac N2\big]$ continua of nonradial weak solutions to
\eqref{1} bifurcating from the pair $(\l_j,u_{\l_j})$ in $(-\infty,0)\times X$. The first branch is $O(N-1)$ invariant, the second is $O(N-2)\times O(2)$ invariant, etc.\\
Moreover all these solutions $v_{\l}$ satisfy
$$\sup_{x\in \R^N}(1+|x|)^{\gamma}|v_{\l}(x)|\leq C$$
where $\gamma\in \R$ satisfies $\frac{N-2}2<\gamma<N-2$.
\end{theorem}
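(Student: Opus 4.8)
The plan is to set up a bifurcation analysis for problem \eqref{1} using the map ${\mathcal L}_{\frac{N+2}{N-2}}$ to transport the problem to a nonsingular elliptic equation, and then to apply a global bifurcation theorem (of Rabinowitz/Krasnoselskii type, or rather its degree-theoretic refinement for continua) in a space $X$ of functions with the prescribed decay $\frac{N-2}{2}<\gamma<N-2$. First I would fix $j\in\N$ and rewrite \eqref{1} as an operator equation $F(\l,v)=0$ where $F(\l,\cdot):X\to X$ is a compact perturbation of the identity; the natural candidate is $v=u_\l+w$ and an equation for $w$ obtained by inverting $-\Delta-\frac{\l}{|x|^2}$ (or, after applying ${\mathcal L}$, the plain Laplacian $-\Delta$ in dimension $M$, though here $M\neq N$ for subcritical-looking reductions — in the critical case $M=N$ so one works directly). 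The linearization at the radial solution $u_{\l}$ is precisely \eqref{lin}, so Lemma \ref{lemma-lin} gives the kernel: at $\l=\l_j$ the kernel contains, besides the radial direction $Z_{\l_j}$, the nonradial functions $Z_{j,i}$ built from degree-$j$ spherical harmonics. The radial direction $Z_{\l_j}$ is always present (it comes from the dilation invariance in the transformed picture and the natural one-parameter family $u_{\d,\l}$), so to get bifurcation of \emph{nonradial} solutions one must quotient it out: this is where the symmetry groups enter.

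The second step is the symmetry reduction. For each subgroup $G\subseteq O(N)$ of the form $O(N-k)\times O(k)$ (acting on $\R^N=\R^{N-k}\times\R^k$), I would restrict $F$ to the fixed-point subspace $X^G$ of $G$-invariant functions. Since $u_\l$ is radial it lies in every $X^G$, and $F$ maps $X^G$ to itself. The point of the restriction is that inside $X^G$ the radial kernel direction $Z_{\l_j}$ survives but many of the $Z_{j,i}$ do not; one needs to count the dimension of $\mathbb{Y}_j(\R^N)^G$, the $G$-invariant homogeneous harmonic polynomials of degree $j$. A standard computation shows that $\mathbb{Y}_j(\R^N)^{O(N-k)\times O(k)}$ is one-dimensional exactly when the parity of $j$ is compatible (it is spanned by a Gegenbauer-type polynomial in the variable $t=|x'|^2-|x''|^2$, or equivalently $\cos\theta$ with $\theta$ the colatitude relative to the splitting), and the relevant restriction is: for $G=O(N-1)\times O(1)=O(N-1)$ one gets a one-dimensional invariant space for every $j$; for $G=O(N-k)\times O(k)$ with $2\le k\le N/2$ one gets a one-dimensional invariant space only when $j$ is even (because an odd-degree invariant polynomial would have to be odd under the swap-related symmetry, forcing it to vanish when $k\ge 2$). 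This yields: for $j$ odd only the $O(N-1)$ branch; for $j$ even the $[\frac N2]$ branches listed as $O(N-1)$, $O(N-2)\times O(2)$, \dots, $O(N-[\frac N2])\times O([\frac N2])$.

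The third step is to verify the transversality/odd-crossing hypothesis so that a genuine continuum bifurcates. Working in $X^G$, the kernel of the linearization $F_v(\l_j,u_{\l_j})$ is two-dimensional — spanned by $Z_{\l_j}$ and the single surviving nonradial mode $Z_{j,i}$ — but I must eliminate the spurious radial direction. The clean way is to further restrict to the subspace of $X^G$ that is $L^2$-orthogonal (with the appropriate weight coming from \eqref{i2b}) to the radial family, i.e. to work on a slice transverse to the orbit $\{u_{\d,\l_j}\}$; on this slice the kernel at $\l=\l_j$ becomes one-dimensional, spanned by $Z_{j,i}$. Then I would show that as $\l$ crosses $\l_j$ this simple eigenvalue crosses zero transversally — equivalently that the associated eigenvalue $\mu(\l)$ of the linearized operator, whose vanishing at $\l_j$ follows from Lemma \ref{lemma-lin} and the explicit formula \eqref{lambda-j}, has $\mu'(\l_j)\ne 0$. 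Because $\l_j$ depends monotonically and smoothly on $j$ through $j(N-2+j)$, and the eigenvalue count in Proposition \ref{cor-1} jumps exactly at each $\l_j$, the crossing is of odd multiplicity within $X^G$, and Rabinowitz's global bifurcation theorem produces a continuum emanating from $(\l_j,u_{\l_j})$. Finally I would record that every solution on the branch lies in $X$ and hence satisfies the stated pointwise bound $\sup_{\R^N}(1+|x|)^\gamma|v_\l(x)|\le C$, which is built into the definition of $X$ (and is inherited along the connected branch by the a priori estimates that make $F$ well-defined and compact on $X$), together with the observation that solutions on the branch near the bifurcation point are nonradial because their projection onto $Z_{j,i}$ is nonzero, and this nonradial character persists along the continuum by a standard connectedness argument ruling out return to the radial set.

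The main obstacle I expect is the bookkeeping for the crossing number: one must show that the linearized operator, restricted to the symmetric slice $X^G$, genuinely has an \emph{odd}-dimensional generalized kernel crossing (not just a kernel) at $\l=\l_j$, which requires either a clean eigenvalue-perturbation argument (showing the relevant eigenvalue is simple in $X^G$ and crosses with nonzero speed) or a computation of the Leray–Schauder degree jump. The subtlety is that the full kernel in $X^G$ is two-dimensional because of the ever-present radial mode; isolating the transverse nonradial mode cleanly — and checking that the weighted decay space $X$ with $\frac{N-2}{2}<\gamma<N-2$ is exactly the right functional setting in which $Z_{j,i}\in X$ while the growing solutions are excluded, so that the kernel is no larger than Lemma \ref{lemma-lin} predicts — is the technical heart of the proof. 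The decay exponent window is dictated by \eqref{Z} and \eqref{i13}: near infinity $Z_\l\sim |x|^{-\frac{N-2}{2}(1+\nu_\l)}$ and $Z_{j,i}\sim|x|^{-\frac N2\nu_{\l_j}-\frac{N-2}{2}+j}$, and one checks these decay rates lie in the prescribed range for $\l<0$, while the second (growing) solution of the ODE in each spherical-harmonic sector does not, which is what makes the linearization Fredholm of index zero on $X$.
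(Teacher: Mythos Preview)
Your overall architecture matches the paper's closely: compact operator on a weighted space $X$, symmetry reduction to $O(N-1)$-- and $O(N-k)\times O(k)$--invariant subspaces (the paper cites Smoller--Wasserman for the one-dimensionality of the invariant spherical-harmonic space, which is your Gegenbauer count), and a degree/Morse-index jump at $\l_j$ to fire Rabinowitz. So the strategy is right.

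The genuine gap is in how you dispose of the radial degeneracy $Z_\l$. You propose to ``work on a slice transverse to the orbit $\{u_{\d,\l_j}\}$'' and treat the kernel as one-dimensional there. But restricting the equation to such a slice does \emph{not} give you solutions of \eqref{1}: a zero of $F$ on the slice is only a zero of $P_\l\circ F$, i.e.\ a solution of a \emph{constrained} problem. The paper makes this explicit: it defines $K_\l=\{v:\int u_\l^{2^*-2}vZ_\l=0\}$, projects $T(\l,v)$ onto $K_\l$, and obtains bifurcating solutions of
\[
-\Delta v-\tfrac{\l}{|x|^2}v-C(\l)v^{2^*-1}=L\,C(\l)\tfrac{N+2}{N-2}u_\l^{2^*-2}Z_\l,
\]
with an unknown Lagrange multiplier $L$. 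Showing $L=0$ is a separate, nontrivial step (Section~3.4): it is done by combining the Pohozaev identity on $\R^N$ with the equation tested against the solution itself, which forces
\[
L\int_{\R^N}\Big(u_{\e,n}^{2^*-1}Z_{\e,n}+u_{\e,n}^{2^*-1}\,x\cdot\nabla Z_{\e,n}\Big)=0,
\]
and then checking the integral is nonzero in the limit $\e\to0$. Nothing in your outline supplies this argument; the dilation invariance of \eqref{1} explains \emph{why} $Z_\l$ is in the kernel, but it does not by itself guarantee that bifurcating solutions on the slice solve the unconstrained equation. You should either implement the projection-plus-Pohozaev route, or make precise a genuine quotient by the $\R_+$--scaling action and verify that bifurcation lifts from the quotient; the latter is not automatic in the weighted space $X$.

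A smaller point: the paper does not prove a transversal simple-eigenvalue crossing as you sketch. Instead it shows (Proposition~3.7) that the number of eigenvalues of $T'_v(\l,u_\l)$ in $(1,\infty)$ equals the Morse index $m(\l)$, and then invokes the parity change of $m(\l)$ across $\l_j$ in the symmetric subspace to get the degree jump. Your ``$\mu'(\l_j)\ne0$'' route would work too, but you have not computed it, and the Morse-index argument is what is actually available from Proposition~\ref{cor-1}.
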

\begin{remark}
The solutions of Theorem \ref{bif} are different from the nonradial one founded in \cite{T}. Indeed, the nonradial solution $\bar u$ in \cite{T} satisfies 
\begin{equation}\label{i8}
\frac{\int_{\R^N}\left(|\nabla\bar u|^2-\frac\l{|x|^2}\bar u^2\right)}{\left(\int_{\R^N}|\bar u|^\frac{2N}{N-2}\right)^\frac{N-2}N}<k^\frac2NS<\left(1-\frac{4\l}{(N-2)^2}\right)^\frac{N-1}N S
\end{equation}
for some integer $k$ and $\lambda$ large enough.\\
On the other hand, since 
\begin{equation}\label{i9}
\frac{\int_{\R^N}\left(|\nabla u_{\l_j}|^2-\frac\l{|x|^2}u_{\l_j}^2\right)}{\left(\int_{\R^N}|u_{\l_j}|^\frac{2N}{N-2}\right)^\frac{N-2}N}=\left(1-\frac{4\l_j}{(N-2)^2}\right)^\frac{N-1}NS,
\end{equation}
our continuum of solutions does not contain $\bar u$, at least in a region "close" to the radial branch $u_{\l}$.\par
The same remark applies to the solutions founded in \cite{MW} since there energy is bigger that $nS$, for some large integer $n$.
\end{remark}
\begin{remark}
A consequence of the previous remark is that, for $\l$ close to $\l_j$ and $j$ large, problem \eqref{1} has at least {\em three} solutions. One of them is the radial function $u_{\l}$ in \eqref{sol-rad} and the others are nonradial functions. Moreover, if $j$ is an even number sufficiently large, we have at least $\big[\frac N2\big]+2$ solutions. This shows that the bifurcation diagram of the solutions to \eqref{1} is very complicated for $\l<0$ and, of course, quite difficult to describe.
\end{remark}
\medskip
\noindent We now consider the case $1<p<\frac{N+2}{N-2}$ and the problem 
\begin{equation}\label{i10}
\left\{\begin{array}{ll}
-\Delta u-\frac {\l}{|x|^2}u=u^p & \hbox{ in }B_1\\
u>0& \hbox{ in }B_1\\
u\in H^{1}_0(B_1)
\end{array}\right.
\end{equation}
where $B_1$ is the unit ball in $\R^N$.  A complete description of \eqref{i10} for $\l\geq 0$ was given in \cite{CG} where the authors proved that,  problem  \eqref{i10} admits a unique solution which is radial. We are not aware of any result instead for $\l<0$. Anyway, since the problem is subcritical then the existence of a radial solution is a straightforward consequence of the Mountain Pass Theorem.
Likewise to the previous case, Proposition \eqref{i2} shows that  the map ${\mathcal L}_p$ provides an equivalence between the radial solutions to \eqref{i10} and the solutions of 
\begin{equation}\label{i11}
\left\{\begin{array}{ll}
-v''-\frac{M-1}rv'=A(\l,p)v^p & \hbox{ in }(0,1)\\
v>0& \hbox{ in }(0,1)\\
v'(0)=v(1)=0
\end{array}\right.
\end{equation}
with $M$ and $A(\l,p)$ as in \eqref{i6}, \eqref{i5} (see Section \ref{s6} for a discussion about the boundary conditions). It is known that this problem has a unique solution that we call $v_\l$.
Then we get the following result:
\begin{theorem}\label{d2}
Let $\l<\frac{(N-2)^2}4$ and $1<p<\frac{N+2}{N-2}$. Then the problem \eqref{i10}
admits only one radial solution $u_\l(r)$. Moreover 
\begin{equation}\label{origin}
r^{\frac{(N-2)}2 (1-\nu_\l)}u_\l(r)\to C \quad \hbox{ as } r\to 0^+
\end{equation}
where $C>0$.
\end{theorem}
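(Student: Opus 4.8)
The plan is to transfer problem \eqref{i10} to the one dimensional boundary value problem \eqref{i11} through the map ${\mathcal L}_p$ of Proposition \ref{i2}, exploit the fact that \eqref{i11} has a unique solution $v_\l$, and then recover the uniqueness for \eqref{i10} together with the asymptotics \eqref{origin} from the explicit form of the inverse of ${\mathcal L}_p$.

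First I would make precise the correspondence between radial solutions of \eqref{i10} and solutions of \eqref{i11}. If $u(|x|)\in H^1_0(B_1)$ is a radial weak solution of \eqref{i10}, then by interior elliptic regularity $u$ is smooth in $B_1\setminus\{0\}$ and hence, as a function of $r$, it solves the ODE \eqref{i3} on $(0,1)$; by Proposition \ref{i2} (applied with $T=1$) the function $v={\mathcal L}_p(u)$ solves the equation in \eqref{i11} on $(0,1)$, it is positive because $v(r)=r^au(r^b)$ with $r^a>0$, and $v(1)=u(1)=0$. Conversely, given a solution $v$ of \eqref{i11}, the function $u(s)=s^{-a/b}v(s^{1/b})$ is radial, positive and solves \eqref{i3} on $(0,1)$, while \eqref{i2b} shows that it has finite Hardy energy; one then has to check that $u\in H^1_0(B_1)$. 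The genuinely delicate point is the matching, at $r=0$, of the condition $v'(0)=0$ with the correct class of (possibly singular at the origin) radial functions belonging to $H^1_0(B_1)$: I expect this to be the main obstacle, and it is precisely the analysis of admissible boundary conditions carried out in Section \ref{s6}. Granting it, ${\mathcal L}_p$ is a bijection between the two solution sets, so the uniqueness of $v_\l$ yields a unique radial solution of \eqref{i10}, namely $u_\l(s)=s^{-a/b}v_\l(s^{1/b})$; from \eqref{k} and \eqref{alpha} one computes $a/b=\frac{N-2}2(1-\nu_\l)$.

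It remains to establish \eqref{origin}, which I would deduce working directly with $v_\l$. Rewriting the equation of \eqref{i11} as $(r^{M-1}v_\l')'=-A(\l,p)\,r^{M-1}v_\l^p<0$ on $(0,1)$, the map $r\mapsto r^{M-1}v_\l'(r)$ is strictly decreasing; since $v_\l'(0)=0$ forces $r^{M-1}v_\l'(r)\to0$ as $r\to0^+$, it follows that $r^{M-1}v_\l'(r)<0$ on $(0,1)$, i.e. $v_\l$ is strictly decreasing on $[0,1)$. Combined with $v_\l>0$ on $(0,1)$ and $v_\l(1)=0$ this gives $v_\l(0)>v_\l(r)>0$ for every $r\in(0,1)$, so that $C:=v_\l(0)>0$. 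Then, using $u_\l(s)=s^{-a/b}v_\l(s^{1/b})$ and the value of $a/b$ above,
$$
s^{\frac{N-2}2(1-\nu_\l)}\,u_\l(s)=v_\l\big(s^{1/b}\big)\ \longrightarrow\ v_\l(0)=C>0\qquad\text{as }s\to0^+,
$$
because $b>0$ and $v_\l$ is continuous at the origin; this is exactly \eqref{origin}.

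To summarise, the argument uses two inputs: the already quoted uniqueness for the one dimensional problem \eqref{i11}, which I would not reprove, and the precise identification via ${\mathcal L}_p$ of radial $H^1_0(B_1)$ solutions of \eqref{i10} with solutions of \eqref{i11}; the whole subtlety is concentrated at $r=0$, where $u_\l$ may be unbounded while $v_\l$ stays bounded and where one must reconcile the weak formulation in $H^1_0(B_1)$ with the Neumann-type condition $v'(0)=0$.
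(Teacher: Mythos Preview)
Your overall strategy coincides with the paper's: push the problem through ${\mathcal L}_p$ to \eqref{i11}, invoke uniqueness there, and read off \eqref{origin} from $u_\l(s)=s^{-a/b}v_\l(s^{1/b})$ with $a/b=\frac{N-2}2(1-\nu_\l)$. Your treatment of the asymptotics is also essentially the paper's, and your monotonicity argument for $v_\l(0)>0$ is correct.

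The genuine gap is exactly the step you flag and then ``grant''. You do not prove that a radial $H^1_0(B_1)$ solution $u$ of \eqref{i10} gives rise, via ${\mathcal L}_p$, to a $v$ satisfying the \emph{full} boundary condition $v'(0)=0$ (equivalently, that $v$ is bounded and continuous at $0$). This is not a side remark but the substance of the paper's proof, and your pointer to ``the analysis of admissible boundary conditions carried out in Section~\ref{s6}'' is circular: Section~\ref{s6} is precisely where Theorem~\ref{d2} is proved, and that analysis \emph{is} the proof. Concretely, from $u\in H^1_0(B_1)$ one only gets $\int_0^1 r^{M-1}|v'|^2<\infty$ via \eqref{i2b}; this alone does not exclude a singular behaviour of $v$ at $0$ (think of $r^{2-M}$ type solutions of the homogeneous equation). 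The paper closes this by applying a Kelvin-type transform $\widetilde v(r)=r^{2-M}v(1/r)$ on $(1,\infty)$, using Ni's radial lemma to get a first decay $\widetilde v(r)\le Cr^{(2-M)/2}$, and then bootstrapping the integrated equation to reach $|\widetilde v'(r)|\le Cr^{1-M}$ and $\widetilde v(r)\le Cr^{2-M}$; this yields $v\in L^\infty(0,1)$, $\lim_{r\to0}r^{M-1}v'(r)=0$, and finally $v'(0)=0$. Without this argument your bijection claim, and hence the uniqueness, is not established.
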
  
\noindent {This result extends the uniqueness result of \cite{CG} to the case $\l<0$ and shows that radial solutions to \eqref{i10} satisfy $u_\l(0)=0$ for $\l<0$ differently from the case of $\l\geq 0$ where $u_\l\notin L^{\infty}(B_1)$. Then the monotonicity properties of the Gidas, Ni and Nirenberg theorem cannot hold when $\l<0$.}\\ 
Once we have this branch of radial solution $u_\l$ for any $\l<0$ we can look for nonradial solutions that arise by bifurcation.
The strategy to obtain multiplicity results for $\l<0$ is the same as in the case $p=\frac{N+2}{N-2}$. 
First we prove non degeneracy of $u_\l$ in the space of radial function. 
Then we characterize the values of $\l$ for which the linearized operator at the radial solution $u_\l$ is non invertible and we compute the change of the Morse index of the radial solutions at these points. These values of $\l$ are related to a weighted eigenvalue for problem \eqref{i11}. 
To this end we let $v_\l\in  H^1((0,1),r^{M-1}dr)$  be the unique solution to \eqref{i11} and set
\begin{equation}\label{i12}
\L(\l)=\inf\limits_{w\in H^1((0,1),r^{M-1}dr),\ w(1)=0}\frac{\int_0^1\left(|w'|^2-pA(\l,p)v_\l^{p-1}w^2
\right)r^{M-1}}{\int_0^1w^2r^{M-3}}.
\end{equation}
The infimum $\L(\l)$ is well defined by the Hardy inequality but it is not clear if it is achieved. Indeed the embedding of $  H^1((0,1),r^{M-1}dr)   \hookrightarrow L^2((0,1),r^{M-3}dr)$ is not compact.
However the crucial information that the infimum \eqref{i12} is strictly negative implies that it is attained.  This is proved in Proposition \ref{d12-bis} in the Appendix in a more general case and relies on a careful study of some weighted problem given in \cite{GGN2}, Section 2.\par
Now we can state the following result,
\begin{theorem}\label{i14}
For any $j\in \N$, $j\geq 1$ there exists a value $\l_j$ that satisfies
\begin{equation}
\left(\frac{4}{(p-1)\left(2-N+\sqrt{(N-2)^2-4\l}\right)+4}\right)^2j(N-2+j)=-\L(\l)
\end{equation}
and an interval $I_j\subset (-\infty, 0)$ such that $\l_j\in I_j$
and that a nonradial bifurcation occurs at $(\l,u_{\l})$ for $\l\in I_j$.\\
Moreover, if $j$ is even there exist at least $\big[\frac N2\big]$ continua of nonradial solutions to
\eqref{i10} bifurcating from $(\l,u_{\l})$ for $\l\in I_j$. The first branch is $O(N-1)$ invariant, the second is $O(N-2)\times O(2)$ invariant, etc.
\end{theorem}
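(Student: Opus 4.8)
The plan is to transplant to the subcritical Dirichlet setting the scheme sketched above for Theorem~\ref{bif}, using the map ${\mathcal L}_p$ to convert the linearised problem into a one–dimensional weighted eigenvalue problem and using the number $\L(\l)$ of \eqref{i12} in place of the explicit spectral data of the critical case. First I would record the radial branch: by Theorem~\ref{d2} and Proposition~\ref{i2}, for every $\l<0$ problem~\eqref{i10} has a unique radial solution $u_\l$, obtained from the unique solution $v_\l$ of \eqref{i11} through ${\mathcal L}_p$, and $\l\mapsto v_\l$, hence $\l\mapsto u_\l$, is continuous. Then I would check that $u_\l$ is non–degenerate \emph{in the class of radial functions}, i.e.\ that the operator $w\mapsto -w''-\frac{M-1}rw'-pA(\l,p)v_\l^{p-1}w$ with $w'(0)=w(1)=0$ is invertible; this is the standard non–degeneracy of the positive ground state of a Lane--Emden ODE on an interval and can be obtained from Sturm oscillation together with the monotonicity of $v_\l$. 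It makes the radial branch locally unique and lets the implicit function theorem continue it away from the degeneracy values.

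To detect those values I would linearise \eqref{i10} at $u_\l$ and expand a candidate $v=\sum_{j\ge0}\sum_i\phi_{j,i}(|x|)\,Y_{j,i}(x/|x|)$ on spherical harmonics, recalling $-\Delta_{S^{N-1}}Y_{j,i}=j(N+j-2)Y_{j,i}$. The $j$–th mode solves $-\phi''-\frac{N-1}r\phi'+\frac{j(N+j-2)-\l}{r^2}\phi=pu_\l^{p-1}\phi$ with $\phi'(0)=\phi(1)=0$, and ${\mathcal L}_p$ (with the exponents $a,b$ of \eqref{k}--\eqref{alpha} attached to $\l$) turns this, exactly as in Proposition~\ref{i2}, into
\begin{equation*}
-\psi''-\frac{M-1}r\psi'+\frac{b^2\,j(N+j-2)}{r^2}\,\psi=pA(\l,p)\,v_\l^{p-1}\psi ,\qquad \psi'(0)=\psi(1)=0 .
\end{equation*}
Since adding a potential $c/r^2$ shifts the Rayleigh quotient in \eqref{i12} by exactly $c$, the lowest ``Hardy–weighted'' eigenvalue of the mode–$j$ operator is $\L(\l)+b^2 j(N+j-2)$. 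Testing \eqref{i12} with $w=v_\l$ (admissible since $v_\l(1)=0$, numerator $A(\l,p)(1-p)\int_0^1 v_\l^{p+1}r^{M-1}<0$) gives $\L(\l)<0$, so Proposition~\ref{d12-bis} applies and the infimum is attained by a positive profile $w_0$; consequently the mode–$j$ operator annihilates $w_0$ — hence is singular, with $0$ as its \emph{first} Dirichlet eigenvalue since $w_0>0$ — precisely when $b^2\,j(N+j-2)=-\L(\l)$, which, via $(N-2)(\nu_\l-1)=2-N+\sqrt{(N-2)^2-4\l}$, is the identity in the statement.

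It remains to locate such a $\l$ and to quantify the sign change. Put $g_j(\l)=b(\l)^2 j(N+j-2)+\L(\l)$, continuous for $\l<0$. As $\l\to0^-$, $b(\l)\to1$, $M\to N$, $v_\l\to u_0$ and the mode–$j$ operator tends to $-\Delta-pu_0^{p-1}$ restricted to the $j$–th harmonic, which is positive definite for $j\ge1$ because $u_0$ is the nondegenerate least–energy solution on $B_1$; hence $g_j(\l)>0$ near $0^-$. As $\l\to-\infty$, $\nu_\l\to+\infty$, so $b(\l)\to0$ and $b(\l)^2 j(N+j-2)\to0$, while $\L(\l)<0$ stays bounded away from $0$ (this needs a separate estimate on $v_\l$, $M$, $A(\l,p)$ as $\l\to-\infty$); hence $g_j(\l)<0$ for $\l$ very negative. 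Thus $g_j$ vanishes at some $\l_j\in(-\infty,0)$ satisfying the required identity, and choosing $I_j$ a small neighbourhood of $\l_j$ containing no other degeneracy value, the first eigenvalue of the mode–$j$ operator changes sign across $I_j$.

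Finally I would run the equivariant bifurcation. Taking $G=O(N-1)$ acting on the last $N-1$ variables, inside the space of $G$–invariant functions (with the behaviour at the origin dictated by \eqref{origin}) the $j$–th harmonic contributes a one–dimensional kernel, spanned by $w_0(|x|)\,C^{(N-2)/2}_j(x_1/|x|)$; since the $G$–equivariant Morse index jumps by one across $I_j$, Rabinowitz's global bifurcation theorem — legitimate because on $B_1$ with $p$ subcritical the associated solution operator is compact — produces a continuum of non–radial $O(N-1)$–invariant solutions of \eqref{i10} emanating from $\{(\l,u_\l)\}$ at a point of $I_j$. When $j$ is even one repeats this with $G=O(N-k)\times O(k)$, $k=1,\dots,\big[\frac N2\big]$: for these groups $\dim(\mathbb{Y}_j)^{G}=1$ exactly when $j$ is even, giving $\big[\frac N2\big]$ continua, pairwise distinct by their isotropy groups and distinct from the radial branch. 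The main obstacle is the one already flagged: $H^1((0,1),r^{M-1}dr)\hookrightarrow L^2((0,1),r^{M-3}dr)$ is not compact, so attainment of $\L(\l)$ — on which the equivalence ``singular mode $\Leftrightarrow b^2 j(N+j-2)=-\L(\l)$'' rests — is genuinely delicate and must be imported from Proposition~\ref{d12-bis} and \cite{GGN2}; a related point is obtaining enough control of $\L(\l)$ as $\l\to-\infty$, where \eqref{i11} degenerates to a critical problem since $\frac{M+2}{M-2}\to p$.
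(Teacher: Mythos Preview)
Your approach is essentially the paper's: the ${\mathcal L}_p$ reduction, the spherical-harmonic decomposition, the identification of degeneracy with $b^2\mu_j=-\L(\l)$, the two limit cases $\l\to0^-$ and $\l\to-\infty$, and equivariant degree bifurcation in the subspaces $\mathcal H$, $\mathcal H^h$ are exactly what Section~\ref{s6} does. Your $\l\to0^-$ argument (via Morse index $1$ of $u_0$) and the paper's (via comparison of the eigenfunction achieving $\L(0)$ with $v_0'$, yielding $\L(0)>1-N$) amount to the same thing; the paper also supplies the missing estimate at $\l\to-\infty$ that you flag, namely $\L(\l)\le (1-p)\big((M-2)/2\big)^{-2}+o(1)\to-(p-1)^3/4<0$ via the Hardy inequality and $\int|v_\l'|^2r^{M-1}=A(\l,p)\int v_\l^{p+1}r^{M-1}$.

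There is, however, one genuine gap. You write ``choosing $I_j$ a small neighbourhood of $\l_j$ containing no other degeneracy value'', but you have no mechanism to ensure such a neighbourhood exists: $\L$ is only known to be continuous, so the zero set of $g_j$ need not be discrete, and zeros of $g_{j'}$ with $j'\ne j$ may well accumulate at $\l_j$. The paper singles this out as the main technical point and resolves it in Proposition~\ref{p4.6}: one sets $\l_k=\sup\{\l\le0:g_k(\l)=0\}$, shows $\l_1>\l_2>\cdots$ from the monotonicity of $\mu_k$ and the intermediate value theorem, and then chooses $\b_k\in(\l_k,\l_{k-1})$ and $\a_k\in(\l_{k+1},\l_k)$ so that at $\b_k$ one has $g_h(\b_k)<0$ for all $h<k$ and $g_j(\b_k)>0$ for all $j\ge k$, while at $\a_k$ one has $g_h(\a_k)<0$ for $h\le k$ and $g_j(\a_k)>0$ for $j>k$. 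This pins down the $\mathcal H$-Morse index as $k$ at $\b_k$ and $k+1$ at $\a_k$, giving the required parity change for the degree argument without ever needing isolation of the degeneracy values. Your proof goes through once you replace the ``isolated $\l_j$'' step by this construction.
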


\bigskip 

The paper is organized as follows: in Section \ref{s1} we show the main properties of the map ${\mathcal L}_p$. In Section \ref{S2} we consider the case $p=\frac{N+2}{N-2}$ and in Section \ref{s6} the sub critical case $1<p<\frac{N+2}{N-2}$. In the Appendix we prove some technical result.
\sezione{Main properties of the map ${\mathcal L}_p$}\label{s1}
In this section we give the proof of Proposition \ref{i2}.
\begin{proof}[Proof of Proposition \ref{i2}.]
Let $u(r)$ be a solution of \eqref{i3}. Then a straightforward computation shows that $v(r)={\mathcal L}_p(u(r))$ is a solution to \eqref{i4} with $M$ and $A(\l,p)$ satisfying \eqref{i6} and \eqref{i5} respectively.\\
Now let us show \eqref{i2b}. We just consider the case $p=\frac{N+2}{N-2}$ and $T=+\infty$ (the subcritical case $1<p<\frac{N+2}{N-2}$  is similar and easier).\\
Note that in this case we have $a=\frac {(N-2)(1-\nu_\l)}{2\nu_\l}$, $b=\frac 1{\nu_\l}$
and 
\begin{equation}\label{second}
v(r)=r^{\frac{(N-2)(1-\nu_\l) }{2\nu_\l }} u\left(r^{\frac 1{\nu_\l}}\right)\,\hbox{ for }r>0.
\end{equation}
First of all we observe that since $u\in{\mathcal D}^{1,2}((0,+\infty),r^{N-1}dr)$ then 
$\int_0^{+\infty}u(s)^\frac{2N}{N-2}s^{N-1}<+\infty$ and so there exist sequences $\d_n\rightarrow0$ and $M_n\rightarrow+\infty$ such that
\begin{equation}\nonumber
u(\d_n)\d_n^\frac{N-2}2\rightarrow0\quad\hbox{and}\quad u(M_n)M_n^\frac{N-2}2\rightarrow0.
\end{equation}
By \eqref{second} we derive that
\begin{equation}\nonumber
\frac{r^{\frac{N}{\nu_\l}-N}}{\nu_\l^2}\left(
u'\left(r^{\frac 1{\nu_\l}}\right)\right)^2=\frac{(N-2)^2(1-\nu_\l)^2}{4\nu_\l^2}
\frac{v^2(r)}{r^2}-\frac{(N-2)(1-\nu_\l)}{\nu_\l}\frac{v(r)v'(r)}r+\left(v'(r)\right)^2
\end{equation}
Choosing $\e_n=\d_n^{\nu_\l}$ and $R_n=M_n^{\nu_\l}$  and integrating we get
\begin{eqnarray}\label{2.5}
&&\frac1{\nu_\l^2}\int_{\e_n}^{R_n}\left(
u'\left(r^{\frac 1{\nu_\l}}\right)\right)^2r^{\frac{N}{\nu_\l}-1}=\frac{(N-2)^2(1-\nu_\l)^2}{4\nu_\l^2}\int_{\e_n}^{R_n}
v^2(r)r^{N-3}\nonumber\\
&&-\frac{(N-2)(1-\nu_\l)}{\nu_\l}\int_{\e_n}^{R_n}v(r)v'(r)r^{N-2}+\int_{\e_n}^{R_n}\left(v'(r)
\right)^2r^{N-1}.
\end{eqnarray}
Then we have, using again \eqref{second}
\begin{eqnarray}
&&\int_{\e_n}^{R_n}v(r)v'(r)r^{N-2}=\frac12r^{N-2}v^2(r)\Big|_{\e_n}^{R_n}-\frac{N-2}2\int_{\e_n}^{R_n}v^2(r)r^{N-3}=\nonumber\\
&&\frac12\left(u(M_n)M_n^\frac{N-2}2\right)^2-\frac12\left(u(\d_n)
\d_n^\frac{N-2}2\right)^2-\frac{N-2}2\int_{\e_n}^{R_n}v^2(r)r^{N-3}.\nonumber
\end{eqnarray}
Hence, by the choice of $\e_n$ and $R_n$ we deduce that
\begin{equation}\nonumber
\int_0^{+\infty}v(r)v'(r)r^{N-2}=-\frac{N-2}2\int_0^{+\infty}v^2(r)r^{N-3}.
\end{equation}
So \eqref{2.5} becomes
\begin{eqnarray}
&&\frac 1{\nu_\l}\int_0^{+\infty}\left(
u'(s)\right)^2s^{N-1}=\frac{(N-2)^2(1-\nu_\l)}{4\nu_\l^2}(1+\nu_\l)\int_0^{+\infty}
v^2(r)r^{N-3}+\nonumber\\
&&\int_0^{+\infty}\left(v'(r)
\right)^2r^{N-1}\hbox{(recalling the definition of $\nu_\l$)}=\nonumber\\
&&\frac{\l}{\nu_\l}\int_0^{+\infty}
u^2(s)s^{N-3}+\int_0^{+\infty}\left(v'(r)
\right)^2r^{N-1},\nonumber
\end{eqnarray}
which gives the claim.
\end{proof}
\sezione{The critical case $p=\frac{N+2}{N-2}$}\label{S2}
\subsection{Basic properties and the main inequality}\label{s2}
In this section we consider problem \eqref{1}.
First let us observe that if we put $p=\frac{N+2}{N-2}$ in \eqref{i1}-\eqref{ni-l} we get
\begin{equation}
a=\frac{(N-2)(1-\nu_\l)}{2\nu_\l},
\end{equation}
and
\begin{equation}
b=\frac1{\nu_\l}.
\end{equation}
Moreover, if $u\in{\mathcal D}^{1,2}((0,+\infty),r^{N-1}dr)$ satisfies in weak sense
\begin{equation}\label{criticou}
-u''-\frac{N-1}ru'-\frac{\l}{r^2}u=C(\l)u^{2^*-1} \quad \hbox{ in }(0,+\infty)
\end{equation}
where $C(\l)=N(N-2)\nu_\l^2$ and $\nu_\l$ as in \eqref{ni-l}, then
Proposition \ref{i2} shows that $v\in{\mathcal D}^{1,2}((0,+\infty),r^{N-1}dr)$ weakly solves
\begin{equation}\label{criticov}
-v''-\frac{N-1}{r}v'=N(N-2)v^{2^*-1}\quad \hbox{ in }(0,+\infty).
\end{equation}
\begin{corollary}\label{C1}
All the radial solutions in $D^{1,2}(\R^N)$ of \eqref{1} are given by the functions $u_{\d,\l}(r)$ in \eqref{sol-rad}.
\end{corollary}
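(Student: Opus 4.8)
The plan is to combine Proposition \ref{i2} with the classification result of \cite{CGS} for the standard critical equation. Given a radial solution $u$ of \eqref{1} in $D^{1,2}(\R^N)$, the restriction $u(r)$ lies in ${\mathcal D}^{1,2}((0,+\infty),r^{N-1}dr)$ and solves \eqref{criticou}. By Proposition \ref{i2} (applied with $p=\frac{N+2}{N-2}$, so that $M=N$), the function $v(r)={\mathcal L}_{\frac{N+2}{N-2}}(u(r))=r^a u(r^b)$ with $a=\frac{(N-2)(1-\nu_\l)}{2\nu_\l}$ and $b=\frac1{\nu_\l}$ belongs to ${\mathcal D}^{1,2}((0,+\infty),r^{N-1}dr)$ and weakly solves \eqref{criticov}, i.e.\ $-v''-\frac{N-1}{r}v'=N(N-2)v^{2^*-1}$ on $(0,+\infty)$, which is exactly the radial form of \eqref{eq-critico}. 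Moreover $v\geq 0$ since $u\geq 0$ and $r^a>0$.

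Next I would invoke the classification theorem of \cite{CGS}: every nonnegative solution of \eqref{eq-critico} in $D^{1,2}(\R^N)$ is one of the Aubin--Talenti bubbles $U_\d(r)=\d^{\frac{N-2}2}\left(1+\d^2 r^2\right)^{-\frac{N-2}2}$ for some $\d>0$ (the trivial solution $v\equiv 0$ corresponding, loosely, to the degenerate limit; one should note that $u\not\equiv 0$ forces $v\not\equiv 0$, and conversely $v\equiv 0$ would force $u\equiv 0$). One subtlety here: \cite{CGS} classifies solutions on all of $\R^N$, so I must first check that $v$, which a priori is only defined and regular on $(0,+\infty)$, extends to a weak $D^{1,2}(\R^N)$ solution on all of $\R^N$ — i.e.\ that the possible singularity at the origin is removable. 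This follows because $v\in {\mathcal D}^{1,2}((0,+\infty),r^{N-1}dr)$ already encodes the finiteness of the Dirichlet energy near $0$, and a point has zero $D^{1,2}$-capacity in dimension $N\geq 3$, so any such $v$ is automatically a weak solution across the origin; alternatively one cites that \cite{T} already set this up. This removability check is the main (minor) obstacle.

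Finally I would invert the map: from $v=U_\d$ we recover $u(r^b)=r^{-a}U_\d(r)$, i.e.\ substituting $s=r^b$ (so $r=s^{1/b}=s^{\nu_\l}$) gives
\begin{equation}\nonumber
u(s)=s^{-a/b}U_\d\left(s^{1/b}\right)=s^{-a\nu_\l}\,\frac{\d^{\frac{N-2}2}}{\left(1+\d^2 s^{2\nu_\l}\right)^{\frac{N-2}2}}.
\end{equation}
Since $a\nu_\l=\frac{(N-2)(1-\nu_\l)}{2}=-\frac{N-2}{2}(\nu_\l-1)$, this is precisely
\begin{equation}\nonumber
u(s)=\frac{s^{\frac{N-2}2(\nu_\l-1)}\,\d^{\frac{N-2}2}}{\left(1+\d^2 s^{2\nu_\l}\right)^{\frac{N-2}2}}=u_{\d,\l}(s),
\end{equation}
matching \eqref{sol-rad}. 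Conversely, a direct check (or again Proposition \ref{i2} run backwards) shows every $u_{\d,\l}$ is a genuine radial solution in $D^{1,2}(\R^N)$, since ${\mathcal L}_{\frac{N+2}{N-2}}$ maps it to $U_\d$ and the norm identity \eqref{i2b} guarantees finite Hardy energy (which is finite precisely when $\l<\frac{(N-2)^2}{4}$, i.e.\ $\nu_\l>0$). This establishes the claimed exhaustive description. The only genuinely delicate point is ensuring the correspondence is a bijection onto the full $D^{1,2}$ solution set — in particular that no solution of \eqref{1} is lost because its image fails to be a bona fide solution on $\R^N$, and no spurious $U_\d$ with $\d\leq 0$ appears — both of which are handled by the positivity and the energy bookkeeping above.
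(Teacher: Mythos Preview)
Your proof is correct and follows essentially the same route as the paper: apply Proposition \ref{i2} with $p=\frac{N+2}{N-2}$ (so $M=N$), invoke the \cite{CGS} classification to identify $v$ with some $U_\d$, and invert ${\mathcal L}_{\frac{N+2}{N-2}}$ to recover $u_{\d,\l}$. You are in fact more careful than the paper on two points---the removability of the origin singularity for $v$ and the converse verification that each $u_{\d,\l}$ lies in $D^{1,2}(\R^N)$---and your exponent bookkeeping in the inversion ($u(s)=s^{-a/b}v(s^{1/b})$ rather than $s^{-a}v(s^{1/b})$) is correct, while the paper's displayed formula contains a minor typo there.
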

\begin{proof}
It follows directly by \eqref{criticou} and \eqref{criticov}. Since all solution to \eqref{criticov} are given by $v_\d(r)=\frac {\d^{\frac{N-2}2}}{\left(1+\d^{2}r^2\right)^\frac{N-2}2}$,
by the definition of ${\mathcal L}_p$ we deduce that
\begin{equation}
u_{\d,\l}(r)=r^{-a}v_\d\left(r^\frac1b\right)=r^{\frac{(N-2)(\nu_\l-1)}{2\nu_\l}}v_\d\left(r^{\nu_\l}\right)=\frac {r^{\frac{(N-2)(\nu_\l-1)}{2\nu_\l}}\d^{\frac{N-2}2}}{\left(1+\d^{2}r^{2\nu_\l}\right)^\frac{N-2}2}
\end{equation}
which gives the claim.
\end{proof}
Now we prove an interesting inequality. We remark that, in the case $\l\ge0$ this is basically contained in \cite{T}. If $\l<0$ we do not find any references although this can be shown using (for example) the concentration-compactness principle of Lions. Anyway, we think that the interest of the next proposition is in its proof, which reduces the Hardy inequality to the classical Sobolev imbedding.
\begin{proposition}\label{teo-2}
Let $\l<\frac{(N-2)^2}4$. Then we have that for any radial function $u\in D^{1,2}\left(\R^N\right)$
\begin{equation}\label{z2}
\int_{\R^N}\left(|\nabla u|^2-\frac\l{|x|^2}u^2\right)\ge\left(1-\frac{4\l}{(N-2)^2}\right)^\frac{N-1}NS\left(\int_{\R^N}|u|^\frac{2N}{N-2}\right)^\frac{N-2}N
\end{equation}
where $S$ is the best Sobolev constant. Moreover the previous inequality is achieved only for $u(r)=u_{\d,\l}(r)$.\\
If $\l>0$ then \eqref{z2} holds for any $u\in D^{1,2}\left(\R^N\right)$.
\end{proposition}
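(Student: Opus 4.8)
The plan is to reduce inequality \eqref{z2} to the ordinary Sobolev inequality \eqref{i3a} via the map $\mathcal{L}_{\frac{N+2}{N-2}}$, exactly in the spirit of Proposition \ref{i2}. First I would reduce to smooth, compactly supported radial functions by density, and observe that since \eqref{z2} only involves the radial profile, it suffices to prove: for every $u$ with $\int_0^\infty |u'|^2 r^{N-1}\,dr<\infty$,
\[
\int_0^\infty\!\Big(u'(r)^2-\frac{\l}{r^2}u(r)^2\Big)r^{N-1}\,dr
\ \ge\ \left(1-\tfrac{4\l}{(N-2)^2}\right)^{\frac{N-1}{N}} S\,
\Big(\int_0^\infty |u(r)|^{\frac{2N}{N-2}} r^{N-1}\,dr\Big)^{\frac{N-2}{N}},
\]
up to the usual factor $\omega_{N-1}=|\mathbb{S}^{N-1}|$, which cancels on both sides when one writes everything back in terms of $\int_{\R^N}$.

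Next, set $v=\mathcal{L}_{\frac{N+2}{N-2}}(u)$, i.e. $v(r)=r^{a}u(r^{b})$ with $a=\frac{(N-2)(1-\nu_\l)}{2\nu_\l}$ and $b=\frac1{\nu_\l}$, noting $M=N$ here so that $v\in\mathcal{D}^{1,2}((0,\infty),r^{N-1}dr)$ by \eqref{i2a}. The key input is the norm identity \eqref{i2b}, which in the critical case reads
\[
\int_0^{\infty}\big(v'(r)\big)^2 r^{N-1}\,dr=\frac1{\nu_\l}\int_0^{\infty}\Big(u'(s)^2-\frac{\l}{s^2}u^2(s)\Big)s^{N-1}\,ds .
\]
On the other side, a direct change of variables $s=r^{b}$ in the $L^{2^*}$ integral gives
\[
\int_0^\infty |v(r)|^{\frac{2N}{N-2}}r^{N-1}\,dr
=\int_0^\infty r^{\frac{2N}{N-2}a+N-1}|u(r^{b})|^{\frac{2N}{N-2}}\,dr
=\frac1{\nu_\l}\int_0^\infty |u(s)|^{\frac{2N}{N-2}}s^{N-1}\,ds,
\]
where one checks, using $a=\frac{(N-2)(1-\nu_\l)}{2\nu_\l}$ and $b=\frac1{\nu_\l}$, that the exponent arithmetic works out so the weight is again $s^{N-1}$ and the Jacobian $b\,s^{b-1}\,ds$ combines to produce precisely the factor $b=\frac1{\nu_\l}$. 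Now apply the radial Sobolev inequality to $v$ (equivalently \eqref{i3a} to the radial function $x\mapsto v(|x|)$): $\int_0^\infty (v')^2 r^{N-1}\ge S\big(\int_0^\infty |v|^{\frac{2N}{N-2}}r^{N-1}\big)^{\frac{N-2}{N}}$. Substituting the two identities and clearing the factors of $\nu_\l$ — one factor on the left, and $\nu_\l^{-(N-2)/N}$ on the right — yields
\[
\int_0^\infty\!\Big(u'^2-\frac{\l}{s^2}u^2\Big)s^{N-1}\ \ge\ \nu_\l^{\,1-\frac{N-2}{N}} S\Big(\int_0^\infty |u|^{\frac{2N}{N-2}}s^{N-1}\Big)^{\frac{N-2}{N}}
=\nu_\l^{\frac{2}{N}}S\Big(\int_0^\infty |u|^{\frac{2N}{N-2}}s^{N-1}\Big)^{\frac{N-2}{N}},
\]
and since $\nu_\l^{2}=1-\frac{4\l}{(N-2)^2}$, the constant is $\big(1-\frac{4\l}{(N-2)^2}\big)^{\frac{N-1}{N}}$, as claimed; multiplying through by $\omega_{N-1}$ and its power gives \eqref{z2}. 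For the equality case: equality in \eqref{i3a} forces $v=v_\d$ for some $\d>0$ by \cite{CGS}, and then $\mathcal{L}_{\frac{N+2}{N-2}}$ being a bijection (Corollary \ref{C1}) forces $u=u_{\d,\l}$; conversely one checks directly that $u_{\d,\l}$ gives equality. Finally, when $\l>0$ the passage from general $u\in D^{1,2}(\R^N)$ to its radial decreasing rearrangement does not increase the left side (the gradient term decreases by Pólya–Szegő, and $-\l\int |x|^{-2}u^2$ decreases in absolute value appropriately since $|x|^{-2}$ is radially decreasing and $\l>0$, so $-\l\int|x|^{-2}u^2$ can only increase — one must argue this direction carefully using that rearrangement increases $\int |x|^{-2}u^2$) while the right side is rearrangement-invariant, so the radial case implies the general one.

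The main obstacle I anticipate is twofold. First, the exponent bookkeeping in the change of variables for the $L^{2^*}$ integral must be done with care: one needs to verify that $\frac{2N}{N-2}a + (b-1) + (N-1)b = N-1$ reduces correctly, i.e. that $\frac{2N}{N-2}a = (N-1)(1-b)+1-b = N(1-b)$, which is where the specific formula \eqref{k} for $a$ in the critical case is used essentially — this is a genuine (if routine) identity to check, and the whole argument collapses if it fails. Second, for the $\l>0$ extension to all of $D^{1,2}(\R^N)$, the rearrangement argument for the Hardy term goes in the favorable direction (larger left-hand side after rearranging) only because $\l>0$; for $\l<0$ this is exactly why the statement is restricted to radial $u$, and I would be careful to state that the rearrangement inequality $\int_{\R^N}|x|^{-2}u^2 \le \int_{\R^N}|x|^{-2}(u^*)^2$ is what makes the $\l>0$ case work and has no analogue for $\l<0$. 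Modulo these points, the proof is a clean transplantation of the Sobolev inequality through $\mathcal{L}_p$, which is the conceptual content the authors are advertising.
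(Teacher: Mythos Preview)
Your approach is exactly the paper's: transplant Sobolev through $\mathcal{L}_{\frac{N+2}{N-2}}$ using \eqref{i2b}, then handle $\l>0$ by Schwarz rearrangement. However, your execution contains two arithmetic slips that you yourself flagged as the danger zone.

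First, in the $L^{2^*}$ change of variables $s=r^{b}$ you have $dr=\tfrac{1}{b}\,s^{\frac{1}{b}-1}\,ds$, so the prefactor is $\tfrac{1}{b}=\nu_\l$, not $b=\tfrac{1}{\nu_\l}$. Concretely, since $\tfrac{2N}{N-2}a+N=\tfrac{N}{\nu_\l}$ one gets
\[
\int_0^\infty |v|^{\frac{2N}{N-2}}r^{N-1}\,dr=\nu_\l\int_0^\infty |u|^{\frac{2N}{N-2}}s^{N-1}\,ds,
\]
the opposite of what you wrote. Second, even granting your (incorrect) intermediate constant $\nu_\l^{2/N}$, that equals $\big(1-\tfrac{4\l}{(N-2)^2}\big)^{1/N}$, not $\big(1-\tfrac{4\l}{(N-2)^2}\big)^{(N-1)/N}$; your last step simply asserts the desired exponent. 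With the corrected Jacobian the computation becomes
\[
\int\Big(u'^2-\tfrac{\l}{s^2}u^2\Big)s^{N-1}
=\nu_\l\int(v')^2 r^{N-1}
\ge \nu_\l\,S\Big(\nu_\l\int|u|^{2^*}s^{N-1}\Big)^{\frac{N-2}{N}}
=\nu_\l^{\frac{2(N-1)}{N}}S\Big(\int|u|^{2^*}s^{N-1}\Big)^{\frac{N-2}{N}},
\]
which is the claimed constant. The equality case and the $\l>0$ rearrangement step are fine (your phrasing around the Hardy term is a bit tangled, but the key fact you state --- rearrangement increases $\int|x|^{-2}u^2$ --- is correct and gives the right direction).
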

\begin{proof}[Proof of Proposition \ref{teo-2}]
Let $v$ be as in \eqref{second}. Then by \eqref{i2b} we get
\begin{eqnarray}
&&\int_{\R^N}\left(|\nabla u|^2-\frac\l{|x|^2}u^2\right)=\nu_\l\int_{\R^N}|\nabla v(|x|)|^2\ge\nu_\l S\left(\int_{\R^N}|v|^\frac{2N}{N-2}\right)^\frac{N-2}N
\nonumber\\
&&=\nu_\l S\left(\omega_N\int_0^{+\infty}|v(r)|^\frac{2N}{N-2}r^{N-1}\right)^\frac{N-2}N=\nu_\l^{2\frac{N-1}N}S\left(\omega_N\int_0^{+\infty}|u(s)|^\frac{2N}{N-2}s^{N-1}\right)^\frac{N-2}N=
\nonumber\\
&&\left(1-\frac{4\l}{(N-2)^2}\right)^\frac{N-1}NS\left(\int_{\R^N}|u|^\frac{2N}{N-2}\right)^\frac{N-2}N \nonumber
\end{eqnarray}
which gives the claim. Note that the previous inequality becomes an identity if and only if $\int_{\R^N}|\nabla v(|x|)|^2=S\left(\int_{\R^N}|v|^\frac{2N}{N-2}\right)^\frac{N-2}N$. It is well known (see for example \cite{CGS}) that this implies $v(x)=\frac {\d^{\frac{N-2}2}}{\left(1+\d^{2}r^2\right)^\frac{N-2}2}$ for some positive $\d$. Recalling that  (see Corollary \ref{C1})
$u(r)=r^{-\frac{(N-2)(1-\nu_\l)}2}v\left(r^{\nu_\l}\right)$ we have the uniqueness of the minimizer.\\
Let us show that if $\l>0$ then \eqref{z2} holds for any $u\in D^{1,2}\left(\R^N\right)$. This follows by the classical spherical rearrangement theory. Indeed, denoting by $u^*=u^*(|x|)$ the classical Schwartz rearrangement we have that $\int_{\R^N}|\nabla u^*|^2\le\int_{\R^N}|\nabla u|^2$, $\int_{\R^N}|u^*|^\frac{2N}{N-2}=\int_{\R^N}|u|^\frac{2N}{N-2}$ and $\int_{\R^N}\frac{|u|^2}{|x|^2}\le\int_{\R^N}\frac{|u^*|^2}{|x|^2}$. Hence, if $\l>0$, we get
\begin{equation}\nonumber
\frac{\int_{\R^N}\left(|\nabla u^*|^2-\frac\l{|x|^2}|u^*|^2\right)}{\left(\int_{\R^N}|u^*|^\frac{2N}{N-2}\right)^\frac{N-2}N}\le
\frac{\int_{\R^N}\left(|\nabla u|^2-\frac\l{|x|^2}u^2\right)}{\left(\int_{\R^N}|u|^\frac{2N}{N-2}\right)^\frac{N-2}N}
\end{equation}
which implies the claim.
\end{proof}
\subsection{The linearized operator}\label{s3}
In this section we linearize problem \eqref{1} at the radial solution $u_{\l}:=u_{1,\l}$ 
given in \eqref{sol-rad} and we look for the degeneracy points. This is equivalent to find nontrivial solutions for the linearized problem \eqref{lin}. Using \eqref{sol-rad} we rewrite \eqref{lin} as follows
\begin{equation}\label{lin-2}
\left\{\begin{array}{ll}
-\Delta v-\frac {\l}{|x|^2}v=N(N+2)\nu_\l ^2 \frac{|x|^{2 \left(\nu_\l-1\right)}} {\left(1+ |x|^{2\nu_\l}\right)^ 2}     v & \hbox{ in }\R^N\\
v\in D^{1,2}\left(\R^N\right).
\end{array}\right.
\end{equation}
We solve \eqref{lin-2} using the decomposition along the spherical harmonic functions and we write
\begin{equation*}
v(r,\theta) = \sum_{j=0}^{\infty} \psi_j(r)Y_j(\theta), \qquad \text{where} \qquad r=|x| \,\, , \,\, \theta=\frac{x}{|x|} \in S^{N-1}
\end{equation*}
and
\begin{equation*}
\psi_j(r) = \int_{S^{N-1}}V(r,\theta)Y_j(\theta)d\theta.
\end{equation*}
Here $Y_j(\theta)$ denotes the $j$-th spherical harmonics, i.e. it satisfies
\begin{equation*}
-\Delta_{S^{N-1}} Y_j = \mu_jY_j
\end{equation*}
where $\Delta_{S^{N-1}}$ is the Laplace-Beltrami operator on $S^{N-1}$ with the standard metric and $\mu_j$ is the $j$-th eigenvalue
of $-\Delta_{S^{N-1}}$. It is known that
\begin{equation}\label{mu-j}
\mu_j = j(N-2+j), \qquad j=0,1,2,\dots
\end{equation}
whose multiplicity is
\begin{equation}\label{dim-mu-j}
 \frac{(N+2j-2)(N+j-3)!}{(N-2)!\,j!} 
\end{equation}
and that
$$ Ker\left(\Delta_{S^{N-1}} + \mu_j\right) = \mathbb{Y}_j(\R^N)\left|_{S^{N-1}}\right.$$
and $\mathbb{Y}_j(\R^N)$ is the space of all homogeneous harmonic polynomials of degree $j$ in $\R^N$.
The function $v$ is a weak solution of \eqref{lin-2} if and only if $\psi_j(r)$ is a weak solution of
\begin{align} \label{1.8}
\begin{cases}
-\psi_j''(r) - \frac{N-1}{r}\psi_j'(r) + \frac{\mu_j-\l}{r^2}\psi_j(r) = N(N+2) \nu_{\l}^2
\frac{r^{2 \left(\nu_\l
-1\right)}} {\left(1+ r^{2\nu_\l
}
\right)^ 2} \psi_j&
\text{in} \,\,(0,\infty) \\
\psi'_j(0)=0 \hbox{ if }j=0 \quad \hbox{ and }\psi_j(0)=0 \hbox{ if }j\geq 1\\
\psi_j \in D^{1,2}((0,+\infty),r^{N-1}dr).
\end{cases}
\end{align}
Letting, as in  \eqref{i1}, 
$$\hat \psi_j(r)=r^{\frac {(N-2)(1-\nu_\l)}{2\nu_\l}} \psi_j\left(  r^{\frac 1{\nu_\l}}\right),$$
we have that $\hat \psi_j$ weakly solves
\begin{align} \label{3.5}
\begin{cases}
-\hat \psi_j''(r) - \frac{N-1}{r}\hat \psi_j'(r) + \frac{\mu_j}{\nu_\l^2r^2}\hat \psi_j(r) = N(N+2)
\frac{1} {\left(1+ r^2\right)^ 2} \hat \psi_j&
\text{in} \,\,(0,\infty) \\
 \hat \psi_j \in D^{1,2}((0,+\infty),r^{N-1}dr).
\end{cases}
\end{align}
Problem \eqref{3.5} is well known: it comes from the linearization of the solution $U_1$ in \eqref{sol-critico-reg} of problem \eqref{eq-critico}.
This equation has a nontrivial solution (since $\frac{\mu_j}{\nu_\l^2}\geq 0$) if and only if one of the following holds
\begin{itemize}
\item[i)] $\frac{\mu_j}{\nu_\l^2}=0$
\item [ii)] $\frac{\mu_j}{\nu_\l^2}=N-1$.
\end{itemize}
Case $i)$ corresponds to the radial degeneracy, i.e. $j=0$. Equation \eqref{3.5} has the solution $\hat \psi_0(r)= \frac{1-r^2}{(1+r^2)^{\frac N2}}$ and turning back to \eqref{1.8} we get 
$$\psi_0(r)= \frac{ r^{\frac{N-2}2\left(\nu_\l-1\right)}\left(1-r^{2\nu_\l}  \right)}{\left( 1+ r^{2\nu_\l}\right)^{\frac N2}}$$
which is a solution to \eqref{1.8} for any $\l<\frac{(N-2)^2}4$. This proves \eqref{Z}.\\
When $\frac{\mu_j}{\nu_\l^2}=N-1$ then equation \eqref{3.5} has the solution $\hat \psi_j(r)=\frac r{(1+r^2)^\frac N2}$. Turning back to \eqref{1.8} we get 
that \eqref{1.8} has the solution 
$$\psi_j(r)=\frac{ r^{\frac{N-2}2\left(\nu_\l-1\right)+\nu_{\l}}}{\left( 1+ r^{2\nu_\l}\right)^{\frac N2}}$$
when ii) is satisfied.
Then $ii)$ implies that \eqref{1.8} has the solution $\psi_j(r)$ if and only if $\l=\l_j$
\begin{equation}\nonumber
\l_j=\frac{(N-2)^2}4\left(1-\frac {\mu_j}{N-1}\right)
\end{equation}
as in \eqref{lambda-j}. This proves \eqref{i13} and finishes the proof of Lemma \ref{lemma-lin}.\\[.4cm]
A first consequence of Lemma \ref{lemma-lin} is the computation of the Morse index of the solution $u_{\l}$ given in Proposition \eqref{cor-1}.
\begin{proof}[Proof of Proposition \eqref{cor-1}]
As shown in the Appendix (Corollary \ref{d12-mi}), the Morse index of the radial solution $u_\l$  is given by the number of negative values $\L_i$ such that the problem
\begin{equation}\label{aut}
\left\{\begin{array}{ll}
-\Delta w-\frac {\l}{|x|^2}w-N(N+2)\nu_\l^2 \frac{|x|^{2 \nu_\l}} {\left(1+ |x|^{2\nu_\l}\right)^ 2}     w=\frac {\L}{|x|^2}w  & \hbox{ in }\R^N\\
w\in D^{1,2}\left(\R^N\right)
\end{array}\right.
\end{equation}
admits a weak solution $w_i$, counted with their multiplicity.
We denote by $w_i$ the solution of \eqref{aut} related to a negative value $\L_i$. We argue as before setting $w_{i,j}(r)=\int_{S^1}w_i(r,\theta)Y_j(\theta)\,d\theta$ and  $\widehat w_{i,j}(r)=r^{\frac{(N-2)(1-\nu_\l)}{2\nu_\l}}w_{i,j}(r^{\frac 1{\nu_\l}})$. Then $\widehat w_{i,j}(r)$ weakly satisfies
\begin{align} \label{aut-trans}
\begin{cases}
-\widehat w_{i,j}''(r) - \frac{N-1}{r}\hat w_{i, j}'(r)- N(N+2) 
\frac{1} {\left(1+ r^2\right)^ 2} \hat w_{i,j}= \frac{\L_i-\mu_j}{\nu_\l^2r^2}\hat w_{i,j}(r)
&\text{in} \,\,(0,\infty) \\
 \hat w_{i,j} \in D^{1,2}((0,+\infty),r^{N-1}dr).
\end{cases}
\end{align}
Since the problem 
\begin{align} \nonumber
\begin{cases}
-\eta''(r) - \frac{N-1}{r}\eta'(r)- N(N+2) 
\frac{1} {\left(1+ r^2\right)^ 2} \eta= \frac{\nu}{r^2}\eta
&\text{in} \,\,(0,\infty) \\
\eta \in D^{1,2}((0,+\infty),r^{N-1}dr)
\end{cases}
\end{align}
admits only one negative eigenvalue which is $1-N$, then
we derive that equation \eqref{aut-trans} has a nontrivial solution corresponding to a $\L_i<0$, if and only if 
\begin{equation}\nonumber
1-N=\frac{\L_i-\mu_j}{\nu_\l^2}.
\end{equation}
So we have that the indexes $j$ which contribute to the Morse index of the solution $u_\l$ are those that satisfy $\L_i=\nu_\l^2(1-N)+\mu_j<0$ and this implies, recalling the value of $\mu_j$  given in \eqref{mu-j}, 
$j<\frac{2-N}2+\frac 12 \sqrt{N^2-\frac {16(N-1)\l}{(N-2)^2}}$. Finally, 
using that the dimension of the eigenspace of the Laplace-Beltrami operator on $S^{N-1}$ related to $\mu_j$ is given in \eqref{dim-mu-j},  \eqref{morse-index} follows. 
\end{proof}
\begin{remark}\label{r1}
Reasoning as in the proof of the previous corollary it is easy to see that any eigenfunction of \eqref{aut} corresponding to an eigenvalue $\L<0$ can be written in the following way
$$w(r,\theta)= r^{\frac{(N-2)}{2}(\nu_\l-1)}\frac{r^{\nu_\l}}{\left(1+r^{2\nu_{\l}}\right)^{\frac N2}}Y_{j}(\theta)     $$
where 
$Y_{j}(\theta)$ is a spherical harmonic related to the eigenvalue $\mu_j$.
\end{remark}
\subsection{The bifurcation result}\label{s4}
In this section we will start the proof Theorem \ref{bif} using the bifurcation theory.\\
To this end let us give some definitions. Let $\gamma>0$ be such that $\frac{N-2}2<\gamma<N-2$. For every $g\in L^{\infty}(\R^N)$ we define the weighted norm
\begin{equation}\label{norm-gamma}
\nor g\nor_{\gamma}:=\sup_{x\in \R^N}(1+|x|)^{\gamma}|g(x)|
\end{equation}
and the space $L^{\infty}_{\gamma}(\R^N):=\{g\in L^{\infty}(\R^N)\, \hbox{ such that } \exists C>0 \,\hbox{ and }\nor g\nor_{\gamma}<C\}$. Set 
\begin{equation}\label{X}
X=D^{1,2}(\R^N)\cap L^{\infty}_{\gamma}(\R^N).
\end{equation}
$X$ is a Banach space with the norm
\begin{equation}\label{norm-X}
\nor g\nor_X:=\max\{\nor g\nor_{1,2},\nor g\nor_{\gamma}\}
\end{equation}
where $\nor g\nor_{1,2}$ denotes the usual norm in $D^{1,2}(\R^N)$, i.e. $\nor g\nor_{1,2}=\left(\int_{\R^N}|\na g|^2\, dx\right)^{\frac 12}$.\\To apply the standard bifurcation theory we have to define a compact operator $T$ from $(-\infty,0)\times X$ into $X$ and to compute its Leray Schauder degree in $0$ in a suitable neighborhood of the radial solution $(\l,u_\l)$, at least for the values $\l\neq \l_j$. This seems difficult since the linearized operator (see \eqref{lin}) is not invertible due to the radial degeneracy of $u_\l$ for every $\l$ proved in Lemma \ref{lemma-lin}. 
To this end we define 
\begin{equation}\nonumber
K_\l:=\left\{v\in D^{1,2}(\R^N)\hbox{ such that }\int_{\R^N}u_{\l}^{2^*-2}vZ_\l\, dx=0\right\}
\end{equation}
with $Z_\l$ as defined in \eqref{Z}. Since $u_{\l}^{2^*-2}\in L^{\frac N2}(\R^N)$ and $v,\ Z_\l\in L^{2^*}(\R^N)$, we have that  $K_\l$ is a linear closed subspace of $D^{1,2}(\R^N)$. We let $P_\l$ be orthogonal the projection of $D^{1,2}(\R^N)
$ on $K_\l$.\\
Now we define the operator $T(\l,v): \left(-\infty,0\right)\times X\rightarrow K_{\l}\cap X$ as 
\begin{equation}\label{operator-T}
T(\l,v)=P_{\l}\left(\left(-\Delta-\frac{\l}{|x|^2}I\right)^{-1}\left(C(\l)(v^+)^{2^*-1}\right)\right)
\end{equation}
and look for
 zeros of the operator $I-T(\l,v)$. A function $v\in X$ is a zero of $ I-T(\l,v)$ if $v\in K_{\l}\cap X$ and $v$ is a weak solution of
\begin{equation}\label{molt}
-\Delta v-\frac{\l}{|x|^2}v-C(\l)v^{2^*-1}=L C(\l)\frac{N+2}{N-2}u_{\l}^{2^*-2}Z(x) \quad \hbox{ in }\R^N
\end{equation}
where $L=L(v)\in \R$ ($L$ is the Lagrange multiplier). The final step will be to show that $L=0$ so that $v$ is indeed a weak solution of \eqref{1}. This will be done in Section \ref{s5}.\\
Before proving our bifurcation result we need some technical results.
\begin{lemma}\label{l-T-ben-def}
The operator $ T(\l,v)$ is well defined from $\left(-\infty,0\right)\times X$ into $K_{\l}\cap X$.
\end{lemma}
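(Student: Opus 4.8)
The plan is to unpack the definition of $T(\l,v)$ in \eqref{operator-T} and check, one composition at a time, that each map lands where it should. Fix $\l<0$ and $v\in X$. First I would note that, since $v\in L^\infty_\gamma(\R^N)$, the function $(v^+)^{2^*-1}$ decays like $|x|^{-\gamma(2^*-1)}$ at infinity; because $\gamma>\frac{N-2}2$ we have $\gamma(2^*-1)=\gamma\frac{N+2}{N-2}>\frac{N+2}2>2$, so $(v^+)^{2^*-1}$ is bounded and has enough decay to be a source term for which the Hardy operator $-\Delta-\frac{\l}{|x|^2}I$ can be inverted in $D^{1,2}(\R^N)$; since $\l<0$ the bilinear form $\int(|\na w|^2-\frac{\l}{|x|^2}w^2)$ is coercive on $D^{1,2}(\R^N)$ (indeed equivalent to the Dirichlet norm, using Hardy's inequality), so $(-\Delta-\frac{\l}{|x|^2}I)^{-1}$ is a well-defined bounded linear map into $D^{1,2}(\R^N)$, and composing with the bounded projection $P_\l$ keeps us in $K_\l\subseteq D^{1,2}(\R^N)$. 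This already gives that $T(\l,v)\in K_\l\cap D^{1,2}(\R^N)$.

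The substantive point is the $L^\infty_\gamma$ bound: I must show $w:=(-\Delta-\frac{\l}{|x|^2}I)^{-1}(C(\l)(v^+)^{2^*-1})$ satisfies $\nor w\nor_\gamma<\infty$, and that $P_\l w$ does too. For the latter I would observe that $P_\l w=w-\frac{\int u_\l^{2^*-2}wZ_\l}{\int u_\l^{2^*-2}Z_\l^2}\,Z_\l$ up to the appropriate normalization, and that $Z_\l$ itself lies in $L^\infty_\gamma(\R^N)$: from \eqref{Z}, $Z_\l(x)\sim |x|^{\frac{N-2}2(\nu_\l-1)}\cdot|x|^{-N\nu_\l}\cdot|x|^{2\nu_\l}=|x|^{-\frac{N-2}2-\nu_\l(N-2)/2\cdot(\text{something})}$; more precisely as $|x|\to\infty$, $Z_\l(x)\sim -|x|^{\frac{N-2}2(\nu_\l-1)+2\nu_\l-N\nu_\l}=-|x|^{-\frac{N-2}2-\frac{(N-2)}{2}\nu_\l}$, which decays strictly faster than $|x|^{-\frac{N-2}2}$ and one checks the exponent lies in $(-(N-2),-\frac{N-2}2)$ for $\l<0$, so $Z_\l\in L^\infty_\gamma$ for the admissible range of $\gamma$. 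Hence it suffices to bound $w$ itself.

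For the bound on $w$ I would use a comparison/barrier argument: $w\ge 0$ solves $-\Delta w-\frac{\l}{|x|^2}w=f$ with $0\le f(x)\le C(1+|x|)^{-\gamma(2^*-1)}$, and $-\gamma(2^*-1)<-(N+2)<-(N-2)-2$. One then constructs an explicit supersolution of the form $\bar w(x)=A(1+|x|)^{-\gamma}$ (or a combination of two such powers): a direct computation shows $-\Delta\bar w-\frac{\l}{|x|^2}\bar w\ge c(1+|x|)^{-\gamma-2}$ with $c>0$ precisely because $\l<0$ makes the Hardy term help near the origin and because $\gamma(\gamma+2-N)$ has the right sign for $\frac{N-2}2<\gamma<N-2$; since $-\gamma-2>-\gamma(2^*-1)$ fails — wait, actually $-\gamma-2 > -\gamma(2^*-1)$ iff $\gamma(2^*-2)>2$ iff $\gamma>\frac{N-2}2$, which holds — the source $f$ decays faster than $(1+|x|)^{-\gamma-2}$, so $\bar w$ dominates $f$ after enlarging $A$, and the maximum principle for the coercive operator $-\Delta-\frac{\l}{|x|^2}I$ (valid since $\l<0$, using that any $D^{1,2}$ subsolution with nonpositive ``boundary data at infinity'' is nonpositive) gives $0\le w\le\bar w$, i.e. $\nor w\nor_\gamma\le A$. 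I would also record the quantitative dependence $\nor w\nor_\gamma\le C(\l)\nor v\nor_\gamma^{2^*-1}$, which is what makes $T(\l,\cdot)$ map bounded sets to bounded sets and will be reused for continuity and compactness in Lemma-to-come.

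The main obstacle I anticipate is the barrier construction near the origin: the potential $\frac{\l}{|x|^2}$ with $\l<0$ is a negative (repulsive) singular term, so one must check that the candidate $(1+|x|)^{-\gamma}$ — or whatever power behaviour one picks near $0$ — is genuinely a supersolution there, and the constraint $\gamma<N-2$ together with $\l<\frac{(N-2)^2}4$ is exactly what guarantees the relevant indicial inequality $\gamma(\gamma-(N-2))<-\l$ has the correct sign. I would therefore treat the regions $|x|\le1$ and $|x|\ge1$ separately, possibly using $|x|^{\frac{N-2}2(1-\nu_\l)}$-type behaviour near $0$ (matching the natural homogeneity of the Hardy operator, cf. \eqref{origin}) glued to $|x|^{-\gamma}$ at infinity, and invoke the strong maximum principle on each piece. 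Everything else — linearity and boundedness of $(-\Delta-\frac{\l}{|x|^2}I)^{-1}$, boundedness of $P_\l$, closedness of $K_\l$ — is already recorded in the text preceding the lemma.
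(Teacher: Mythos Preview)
Your approach is correct and genuinely different from the paper's. Both proofs reduce to showing that $w=\widetilde T(\l,v)$ lies in $L^\infty_\gamma$, and both use the weak comparison principle coming from coercivity of $-\Delta-\frac{\l}{|x|^2}$ on $D^{1,2}(\R^N)$; the difference is in \emph{what} one compares with. The paper compares $w$ with the exact solution of $-\Delta W-\frac{\l}{|x|^2}W=(1+|x|)^{-\gamma\frac{N+2}{N-2}}$ and then analyses that solution by the substitution $\bar W(r)=r^{\frac{N-2}2(1-\nu_\l)}W(r)$ (the radial transform underlying $\mathcal L_p$) together with a Kelvin inversion, obtaining by ODE integration the explicit pointwise rates \eqref{w-infinito} and \eqref{w-in0}. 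You instead propose an explicit barrier $(1+|x|)^{-\gamma}$ and check it is a supersolution; this is shorter for the present lemma. A direct computation actually gives
\[
-\Delta\big((1+r)^{-\gamma}\big)=\gamma(1+r)^{-\gamma-2}\Big[(N-2-\gamma)+\tfrac{N-1}{r}\Big]>0
\]
for all $r>0$ because $\gamma<N-2$, so the anticipated difficulty near the origin does not arise and a single global barrier already works (the Hardy term, being nonnegative for $\l<0$, only helps). What the paper's longer route buys is the \emph{strict} decay $(1+r)^\gamma W(r)\to0$ as $r\to\infty$ and $r\to0$, which is exactly what is used to upgrade boundedness to compactness in the proof of Proposition~\ref{compattezza}; your barrier gives only $(1+r)^\gamma w(r)\le A$, so the compactness step would require an additional argument.

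Two small corrections. First, the decay exponent of $Z_\l$ at infinity is $-\tfrac{N-2}{2}(1+\nu_\l)$, which for $\l<0$ is strictly below $-(N-2)$, not in the interval $(-(N-2),-\tfrac{N-2}{2})$; your conclusion $Z_\l\in L^\infty_\gamma$ is nonetheless correct since this is faster decay than $|x|^{-\gamma}$. Second, you are right to check that $P_\l$ preserves $X$ via $P_\l w=w-cZ_\l$ with $Z_\l\in X$; the paper takes this step for granted.
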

\begin{proof}
It is enough prove that the operator 
\begin{equation}\label{T-tilde}
\widetilde T(\l,v)=\left(-\Delta -\frac{\l}{|x|^2}I\right)^{-1}\left(C(\l)(v^+)^{2^*-1}\right)
\end{equation} 
is  well defined from $\left(-\infty,0\right)\times X$ in $X$.\\
Since $(v^+)^{2^*-1}\in L^{\frac{2N}{N+2}}(\R^N)$ 
there exists a unique  $g\in D^{1,2}(\R^N)$ such that $g=\widetilde T(\l,v)$, see Lemma \ref{l1} in the Appendix, i.e. $g$ is a weak solution to
$$-\Delta g-\frac{\l}{|x|^2}g=C(\l)(v^+)^{2^*-1}\quad \hbox{ in }\R^N.$$
Then, the comparison theorem for functions in $D^{1,2}(\R^N)$, yields
$$|g(x)|\leq C|w(x)|\quad \hbox{a. e. in }\R^N $$
where $w$ is the unique weak solution to
\begin{equation}\label{def-w}
-\Delta w-\frac{\l}{|x|^2}w=\frac{1}{(1+|x|)^{\gamma \frac{N+2}{N-2}}}\quad \hbox{ in }\R^N.
\end{equation}
We are going to prove that
\begin{equation}\label{n1}
\left(1+r\right)^\g w(r)\le C.
\end{equation}
To do this let $\bar w(r)=r^{k}w\left( r\right)$, where $k=\frac{N-2}2(1-\nu_\l)$ and $\nu_\l$ is as defined in \eqref{ni-l}. The function $\bar w$ weakly satisfies
$$-\bar w''-\frac{N-1-2k}r\bar w'=\frac {r^k}{(1+r)^{\gamma \frac{N+2}{N-2}}} \quad \hbox{ in }(0,+\infty).$$
Integrating we get
$$-r^{N-1-2k}\bar w'(r)= C+\int_{r_0}^r \frac{s^{N-1-k}}{(1+s)^{\gamma \frac{N+2}{N-2}}} \, ds$$
for any $r_0>0$.
Consequently $  -\bar w'(r)\leq Cr^{1-N+2k}+Cr^{1-N+2k+N-k-\gamma\frac{N+2}{N-2}} $ (we are assuming that $N-1-k-\gamma\frac{N+2}{N-2}\neq -1$; the case $N-k-\gamma\frac{N+2}{N-2}= 0$ follows in a very similar way).\\ 
 Since $ w\in D ^{1,2}(\R^N)$, from Ni's radial Lemma (see \cite{Ni}) we know that $ w(r)\leq Cr^{\frac{2-N}2}$ for any $r$, so that $\bar w(r)\leq Cr^{k-\frac {N-2}2}=Cr^{-\frac{ N-2}2\nu_\l}$. 
Then $\bar w(r)\to 0$
 as $r\to +\infty$. Integrating $\bar w'(r)$ from $r$ to $+\infty$ yelds
$$\bar w(r)\leq Cr^{2-N+2k}+Cr^{2+k-\gamma\frac{N+2}{N-2}}.$$
This implies that, since by assumption $\frac{N-2}2<\gamma<{N-2}$ and $k<0$ for any $\l<0$,
\begin{equation}\label{w-infinito}
(1+r)^{\gamma}w(r)\leq Cr^{\gamma+2-N+k}+Cr^{\gamma+2-\gamma\frac{N+2}{N-2}}\leq C
\end{equation}
for $r$ large enough.\\
To finish the proof of  \eqref{n1} we need to prove that $|w(x)|$ is bounded in a neighborhood of the origin. To this end we set
$$\widetilde w(r)=\frac 1{r^{N-2k-2}}\bar w\left(\frac 1r\right).$$
The function $\widetilde w$ is the Kelvin transform of $\bar w$ and so it satisfies
$$-\widetilde w''-\frac{N-1-2k}r \widetilde w'=\frac 1{r^{N-2k+2}}\frac {r^{-k}}{(1+\frac 1r)^{\gamma \frac{N+2}{N-2}}} \quad \hbox{ in }\R^N\setminus\{0\}.$$
Reasoning as before and integrating from $r_0$ to $r$ we get
$$-r^{N-1-2k}\widetilde w'(r)\leq C+C\int_{r_0}^rs^{-3-k}ds$$
and, assuming $-3-k>-1$,(observe that the case $-3-k<-1$ is easier and the case $-3-k=-1$ follows reasoning as in the first case)
$$-\widetilde w'(r)\leq Cr^{1-N+2k}+Cr^{1-N+2k-2-k}$$
for $r$ large enough. Then, using that $w(r)\rightarrow0$ as $r\rightarrow+\infty$,
$$\widetilde w(r)\leq  Cr^{2-N+2k}+Cr^{-N+k}$$
for $r$ large enough. This implies that
$$\bar w(r)=\frac 1{r^{N-2-2k}}  \widetilde w\left(\frac 1r\right )\leq Cr^{2+k}+C$$
for $ r$ small enough. Finally, using that $w(r)=r^{-k}\bar w(r)$ we have that
\begin{equation}\label{w-in0}
w(r)\leq C\left\{
\begin{array}{ll}
r^{-k}+r^{2} & \hbox{ if }k<-2\\
r^2(1-\log r)  & \hbox{ if }k=-2\\
r^{-k}  & \hbox{ if }-2<k<0
\end{array}
\right.
\hbox{  for }r \, \hbox{ small enough. }
\end{equation}
Estimates \eqref{w-infinito} and \eqref{w-in0} imply that
$$\sup_{x\in \R^N}(1+|x|)^{\gamma}|w(x)|\leq C$$
so that $w$ and hence $g$ belong to $L^{\infty}_{\gamma}(\R^N)$ concluding the proof. 
\end{proof}
\begin{proposition}\label{compattezza}
We have that:
\begin{itemize}
\item[i)] the operator $T(\l,v):\left(-\infty,0\right)\times X \rightarrow  K_{\l}\cap X $ defined in \eqref{operator-T}
is continuous with respect to $\l$  and it is compact from $ X $ into $K_{\l}\cap X$ for any $\l\in (-\infty,0)$ fixed;
\item[ii)] the linearized operator $I-T'_v(\l,u_{\l})I$ is invertible for any $\l\neq \l_j$, where $\l_j$ are as defined in \eqref{lambda-j}.
\end{itemize}
\end{proposition}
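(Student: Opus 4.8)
\medskip
\noindent\textbf{Outline of the proof of Proposition \ref{compattezza}.}

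\emph{Part i).} The plan is first to reduce to $\widetilde T$ of \eqref{T-tilde}: indeed $P_\l$ differs from the identity only by the rank--one operator whose range is spanned by $\eta_\l:=\big(-\D-\frac{\l}{|x|^2}I\big)^{-1}\big(u_\l^{2^*-2}Z_\l\big)$, and $\eta_\l\in X$ by the argument already carried out in the proof of Lemma \ref{l-T-ben-def}. For the continuity in $\l$ we will use that $u\mapsto\frac{u}{|x|^2}$ is bounded from $D^{1,2}(\R^N)$ into its dual (Hardy's inequality), so that $-\D-\frac{\l}{|x|^2}I$ depends linearly, hence continuously, on $\l$ as an operator from $D^{1,2}(\R^N)$ into its dual, and is uniformly invertible for $\l$ in compact subsets of $\big(-\infty,\frac{(N-2)^2}4\big)$ (for $\l<0$ the associated quadratic form is trivially coercive); consequently the resolvent $R_\l:=\big(-\D-\frac{\l}{|x|^2}I\big)^{-1}$ is continuous in $\l$ in operator norm. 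Together with the continuous dependence of $C(\l)$, $u_\l$, $Z_\l$, $\eta_\l$ on $\l$, and with the weighted estimates of Lemma \ref{l-T-ben-def} (whose constants are locally uniform in $\l$), this gives the continuity of $\l\mapsto T(\l,v)$ into $X$.

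The core of part i) is the compactness for a fixed $\l<0$. Given a bounded sequence $(v_n)$ in $X$, set $f_n:=C(\l)(v_n^+)^{2^*-1}$ and $g_n:=R_\l f_n$. From $\|v_n\|_X\le C$ one reads off both $\|f_n\|_{L^{\frac{2N}{N+2}}}\le C$ and the pointwise bound $0\le f_n(x)\le C(1+|x|)^{-\g\frac{N+2}{N-2}}$; the comparison argument of Lemma \ref{l-T-ben-def} then yields $|g_n|\le C|w|$, with $w$ the fixed solution of \eqref{def-w}, and since in the admissible range $\frac{N-2}2<\g<N-2$ all exponents occurring in \eqref{w-infinito}--\eqref{w-in0} are \emph{strictly} better than $-\g$, we obtain $(1+|x|)^\g|g_n(x)|\to0$ as $|x|\to\infty$ and $|g_n(x)|\to0$ as $|x|\to0$, uniformly in $n$. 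Next, Rellich's theorem on balls and a diagonal argument allow us to assume $v_n\to v$ a.e.\ in $\R^N$ along a subsequence; because $(1+|x|)^{-\g\frac{N+2}{N-2}}\in L^{\frac{2N}{N+2}}(\R^N)$ exactly when $\g>\frac{N-2}2$, dominated convergence upgrades this to $f_n\to f:=C(\l)(v^+)^{2^*-1}$ strongly in $L^{\frac{2N}{N+2}}(\R^N)$, hence $g_n=R_\l f_n\to R_\l f$ in $D^{1,2}(\R^N)$. Finally, convergence in $L^\infty_\g(\R^N)$ will follow by combining the uniform smallness of the weighted supremum of $|g_n-R_\l f|$ outside a large ball and near the origin with interior elliptic estimates on the annuli $\{\rho<|x|<\rho^{-1}\}$, where the $g_n$ are bounded in $D^{1,2}(\R^N)$, are locally uniformly bounded, and solve $-\D g_n-\frac{\l}{|x|^2}g_n=f_n$, so that $g_n\to R_\l f$ in $C^0_{\mathrm{loc}}(\R^N\setminus\{0\})$. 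Thus $g_n\to R_\l f$ in $X$ and $T(\l,v_n)=P_\l g_n$ converges in $X$; that $T(\l,v)\in K_\l\cap X$ is immediate from the definition of $P_\l$ and from Lemma \ref{l-T-ben-def}.

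\emph{Part ii).} Here the plan is to reduce to a non--resonance statement and invoke the Fredholm alternative. Differentiating \eqref{operator-T} at $u_\l>0$ (where the derivative of $v\mapsto(v^+)^{2^*-1}$ is $(2^*-1)u_\l^{2^*-2}\,\cdot\,$) gives
\[
A_\l\phi:=T'_v(\l,u_\l)\phi=P_\l R_\l\Big(N(N+2)\nu_\l^2\,\frac{|x|^{2(\nu_\l-1)}}{(1+|x|^{2\nu_\l})^2}\,\phi\Big),
\]
and $A_\l$ is compact from $X$ into $K_\l\cap X$ by the scheme of part i): the linear potential equals $C(\l)(2^*-1)u_\l^{2^*-2}$, lies in $L^{\frac N2}(\R^N)$ and decays fast at infinity, so an a.e.\ limit extracted via Rellich is again promoted, by domination, to strong convergence. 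By the Fredholm alternative it is then enough to prove that $I-A_\l$ is injective on $K_\l\cap X$. Suppose $\phi\in K_\l\cap X$ and $\phi=A_\l\phi$. Unwinding the projection exactly as in the passage leading to \eqref{molt}, there is $L\in\R$ with
\[
-\D\phi-\frac{\l}{|x|^2}\phi-N(N+2)\nu_\l^2\,\frac{|x|^{2(\nu_\l-1)}}{(1+|x|^{2\nu_\l})^2}\,\phi=L\,u_\l^{2^*-2}Z_\l\ \hbox{ in }\R^N,\qquad\int_{\R^N}u_\l^{2^*-2}\phi\,Z_\l=0 .
\]
Testing with $Z_\l$ — which belongs to $D^{1,2}(\R^N)$ and solves the homogeneous problem \eqref{lin} with $\d=1$ — and integrating by parts (legitimate since $u_\l^{2^*-2}\in L^{\frac N2}(\R^N)$ and $\phi,Z_\l\in L^{2^*}(\R^N)$), the left--hand side vanishes by symmetry of the associated bilinear form, so $0=L\int_{\R^N}u_\l^{2^*-2}Z_\l^2$, hence $L=0$ because $\int_{\R^N}u_\l^{2^*-2}Z_\l^2>0$. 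Thus $\phi$ is a $D^{1,2}(\R^N)$--solution of \eqref{lin} with $\d=1$; by Lemma \ref{lemma-lin} and the hypothesis $\l\neq\l_j$ this forces $\phi=cZ_\l$ for some $c\in\R$, and the orthogonality constraint then gives $c\int_{\R^N}u_\l^{2^*-2}Z_\l^2=0$, i.e.\ $c=0$ and $\phi\equiv0$, as required.

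\emph{The main obstacle} will be the compactness in part i). The nonlinearity is critical and the equation is posed on all of $\R^N$, so $R_\l$ is not compact and the Sobolev energy alone cannot exclude concentration. What makes the argument go through is the weighted factor $L^\infty_\g(\R^N)$ in the definition \eqref{X} of $X$, together with the crucial output of Lemma \ref{l-T-ben-def}: in the admissible range $\frac{N-2}2<\g<N-2$ the resolvent $R_\l$ maps the relevant weighted class into functions decaying \emph{strictly} faster than $(1+|x|)^{-\g}$. This provides simultaneously a fixed $L^{\frac{2N}{N+2}}$-majorant for $(f_n)$ and uniform decay for $(g_n)$; the a.e.\ limit coming from Rellich and the continuity of $R_\l$ then complete the proof.
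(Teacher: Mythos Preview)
Your proposal is correct and follows essentially the same route as the paper. Both reduce part i) to the unprojected operator $\widetilde T$, use the weighted estimates of Lemma \ref{l-T-ben-def} to control $g_n$ pointwise, and split the $L^\infty_\gamma$-convergence into uniform smallness near $0$ and near $\infty$ plus a compactness argument on intermediate annuli; for part ii) both test the Lagrange-multiplier equation against $Z_\l$ to kill $L$ and then invoke Lemma \ref{lemma-lin}.

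The only (minor) differences are in presentation. For the $D^{1,2}$-convergence the paper argues via convergence of the Hardy norm $\int|\nabla g_n|^2-\l\int\frac{g_n^2}{|x|^2}$, while you go through $f_n\to f$ in $L^{\frac{2N}{N+2}}$ by dominated convergence and then apply the continuity of $R_\l$; this is a cleaner shortcut that uses exactly the same ingredients. For the annulus step you cite interior elliptic estimates to get $g_n\to R_\l f$ in $C^0_{\mathrm{loc}}(\R^N\setminus\{0\})$, whereas the paper claims $v_n\to\bar v$ in $L^\infty$ of the annulus and deduces the same for $g_n-\bar g$ without spelling out the regularity step; your formulation is more transparent. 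Finally, your explicit appeal to the Fredholm alternative in part ii) makes the logical structure clearer but adds nothing new mathematically.
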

\begin{proof}
Let us prove $i)$.
The operator $T(\l,v)$ is clearly continuous with respect to $\l$. As in the proof of Lemma \ref{l-T-ben-def}, we will prove that the operator $\widetilde T$, defined in \eqref{T-tilde} is compact 
from $X$ into $X$ for every $\l$ fixed. This implies in turn that $T$ is compact for every $\l$ fixed. To this end let $v_n$ be a sequence in $X$ such that $\nor v_n\nor_{X}\leq C$ and let $g_n= \widetilde T(\l, v_n)$. Then $g_n\in X$ and by Lemma \ref{l-T-ben-def} is a weak solution to
\begin{equation}\label{g_n}
-\D g_n-\frac{\l}{|x|^2}g_n=C(\l)(v_n^+)^{2^*-1}.
\end{equation}
Since $v_n$ is bounded in $X$ then $|v_n(x)|\leq C(1+|x|)^{-\gamma}$ and $v_n$ is uniformly bounded in $D^{1,2}(\R^N)$ then, up to a subsequence, $v_n\rightharpoonup \bar v$ weakly in $D^{1,2}(\R^N)$ and almost everywhere in $\R^N$.
Multiplying  \eqref{g_n}  by $g_n$ and integrating we get
\begin{equation}\label{g-n-bis}
  \int_{\R^N}|\na g_n|^2\,dx -\int_{\R^N}\frac {\l}{|x|^2}g_n^2=C(\l)\int_{\R^N}(v_n^+)^{2^*-1}g_n\, dx.
\end{equation}
Then the Hardy and Sobolev inequalities imply that
$$c_{\l}\int_{\R^N}|\na g_n|^2\,dx\leq C\nor v_n^{2^*-1}\nor_{\frac{2N}{N+2}}\nor g_n\nor_{1,2}$$
where $c_{\l}$ is as in Lemma \ref{l2} 
and $\nor \cdot \nor_{q}$ denotes the usual norm in $L^{q }(\R^N)$. Then 
$$\nor g_n\nor_{1,2}\leq C$$
so that, up to a subsequence $g_n\rightharpoonup \bar g$ weakly in $D^{1,2}(\R^N)$ and almost everywhere in $\R^N$. 
Passing to the limit in \eqref{g_n}, we get that $\bar g$ is a weak solution of
$$-\Delta \bar g - \frac{\l}{|x|^2}\bar g=C(\l)(\bar v^+)^{2^*-1}\quad \hbox{ in }\R^N.$$
Moreover, reasoning exactly as in the proof of Lemma \ref{l-T-ben-def} we get also $|g_n(x)|\leq C(1+|x|)^{-\gamma}$ for any $n$. This estimate allow us to pass to the limit in \eqref{g-n-bis} getting that 
\begin{eqnarray}
&& \int_{\R^N}|\na g_n|^2\,dx-\int_{\R^N}\frac {\l}{|x|^2}g_n^2=C(\l)\int_{\R^N}(v_n^+)^{2^*-1}g_n\, dx\to\nonumber\\
&&C(\l)\int_{\R^N}(\bar v^+)^{2^*-1}\bar g\, dx=\int_{\R^N}|\na \bar g|^2\,dx-\int_{\R^N}\frac {\l}{|x|^2}\bar g^2\, dx.\nonumber
\end{eqnarray}
By Lemma \ref{l2} this implies that $g_n\to g$ strongly in $D^{1,2}(\R^N)$. To finish the proof we need to show that $\nor g_n-g\nor_{\gamma}<\e$ if $n$ is large enough. To this end observe that $g_n-g$ weakly solves
$$-\Delta (g_n-\bar g)-\frac{\l}{|x|^2}(g_n-\bar g)=C(\l)\left( (v_n^+)^{2^*-1}- (\bar v^+)^{2^*-1}\right) \, \hbox{ in }\R^N$$
and, since $v_n$ and $\bar v$ are uniformly bounded in $L^{\infty}_{\gamma}(\R^N)$ then, as in Lemma \ref{l-T-ben-def} we have that $|g_n-\bar g|\leq C w$, where $w$ is  defined in \eqref{def-w}.  Then, from \eqref{w-infinito} we have that there exists $R_0>0$ such that  $(1+|x|)^{\gamma}| g_n(x)-\bar g(x)|\leq \frac \e 3$ in $\R^N\setminus B_{R_0}$ uniformly in $n$. Using \eqref{w-in0} instead we get that there exists $r_0$ such that  $ (1+|x|)^{\gamma}| g_n(x)-\bar g(x)|\leq \frac \e 3$ in $ B_{r_0}$ uniformly in $n$. Finally 
since, $v_n\to \bar v$ in $L^{\infty}(B_{R_0}\setminus B_{r_0})$  then we get that $(1+|x|)^{\gamma}|g_n-\bar g|<\frac \e 3$ for $n$ large enough in $B_{R_0}\setminus B_{r_0}$
and the proof of $i)$ is complete.\\[.3cm]
Let us prove $ii)$.
Let us consider the linearized operator of $I-T$ in $(\l,u_{\l})$. We have that
$$<I-T'_v(\l,u_{\l})I,w>=w-P_{K_\l}\left(\left( -\Delta-\frac{\l}{|x|^2}I\right)^{-1}\left(C(\l)\frac{N+2}{N-2}u_{\l}^{2^*-2}w\right)\right)$$
so that $w-T'_v(\l,u_{\l})w=0$ if and only if $w\in K_{\l}\cap X$ satisfies
$$-\D w-\frac{\l}{|x|^2}w -C(\l)\frac{N+2}{N-2}u_{\l}^{2^*-2}w=L C(\l)\frac{N+2}{N-2} u_{\l}^{2^*-2}Z_\l $$
weakly in $D^{1,2}(\R^N)$ for some $L=L(w)\in\R$. 
Multiplying by $Z_\l$ and recalling the equation satisfied by $Z_\l$ we get 
$$0=LC(\l)\frac{N+2}{N-2} \int_{\R^N}u_{\l}^{2^*-2}Z_\l^2$$
and this implies $L=0$. Then $w\in K_{\l}$ is a weak solution of 
\begin{equation}\label{prima}
-\Delta w-\frac{\l}{|x|^2}w-C(\l)\frac{N+2}{N-2}u_{\l}^{2^*-2}w=0\quad \hbox{ in }\R^N.
\end{equation}
Using Lemma \ref{lemma-lin} then we get that if $\l\neq \l_n$  equation \eqref{prima} has only the solution $Z_\l$ which is not in $K_\l$. This means that equation \eqref{prima} has in $K_\l$ only the solution 
 $w=0$ and the operator $I-T'_v(\l,u_{\l})I$ is indeed invertible, concluding the proof.
\end{proof}
\noindent To prove the bifurcation result (Theorem \ref{bif}) we need to exploit some of the symmetries of problem \eqref{1}. So we define  the subspace $\h$  of  $X$ given by 
\begin{equation}\nonumber
\h:=\{v\in  X\, ,\hbox{s.t. }v(x_1,\dots,x_N)=v(g(x_1,\dots,x_{N-1}),x_N)\, ,\hbox{ for any }g\in O(N-1)\}.
\end{equation}
Now let us consider  the subgroups
$\mg_h$ of $O(N)$ defined by
\begin{equation}\nonumber
\mg_h=O(h)\times O(N-h)\quad \hbox{ for }1\leq h\leq \left[\frac N2\right]
\end{equation}
where $\left[ a\right]$ stands for the integer part of $a$.
We consider also the subspaces $\h^h$ of $X$ of functions invariant by the
action of $\mg_h$. \\
The results of Smoller and Wasserman in \cite{SW86} and \cite{SW90} implies that, for any $j$ the eigenspace of the Laplace Beltrami operator related to $\mu_j$ (see Section \ref{s3}) contains only one eigenfunction which is $O(N-1)$-invariant (or which is invariant by the action of $\mg_h$). Then, Corollary \ref{cor-1} implies that 
$$m_{\h}(\l_j-\e)-m_{\h}(\l_j+\e)=1$$
if $\e$ is small enough, where $m_{\h}$ denotes the Morse index of $u_\l$ in the space $\h$ (or $\h^h$).\\
The change of the Morse index of $u_\l$ is a good clue to having the bifurcation, but since $u_{\l}$ is radially degenerate we have to use the projection $P_\l$, changing problem \eqref{1} with problem \eqref{molt}.\\
What we can do, at this step, is to prove a bifurcation result for problem \eqref{molt}. To prove this we need ``roughly speaking'' that the Morse index of $u_\l$ as a solution of problem \eqref{molt} is the same as $m(\l)$ and this is proved in the following proposition. 
\begin{proposition}\label{p2}
The number of the eigenvalues of $T'_v(\l,u_\l)$ counted with multiplicity in $(1,+\infty)$ coincides with the morse index $m(\l)$ of $u_\l$.
\end{proposition}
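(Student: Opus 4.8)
The plan is to realise $T'_v(\l,u_\l)$ as the restriction of a concrete compact self-adjoint operator to an invariant subspace, and then to count its eigenvalues in $(1,+\infty)$ by a min-max argument. Set $L_\l:=-\Delta-\frac{\l}{|x|^2}I$ and $M_\l w:=C(\l)\frac{N+2}{N-2}u_\l^{2^*-2}w$; differentiating \eqref{operator-T} at $v=u_\l>0$ gives $T'_v(\l,u_\l)w=P_\l\,L_\l^{-1}M_\l\,w$ for $w\in K_\l\cap X$. The right Hilbert structure here is not the usual one on $D^{1,2}(\R^N)$ but the one associated with the form $\langle u,v\rangle_\l:=\int_{\R^N}\big(\nabla u\cdot\nabla v-\frac{\l}{|x|^2}uv\big)$, which by the Hardy inequality and $\l<\frac{(N-2)^2}4$ is a scalar product on $D^{1,2}(\R^N)$ equivalent to the standard one. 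First I would check that $L_\l^{-1}M_\l$ is positive and self-adjoint with respect to $\langle\cdot,\cdot\rangle_\l$, since $\langle L_\l^{-1}M_\l w_1,w_2\rangle_\l=\langle M_\l w_1,w_2\rangle=C(\l)\frac{N+2}{N-2}\int_{\R^N}u_\l^{2^*-2}w_1w_2$ is visibly symmetric, and that it is compact (this is the linearised version of the compactness proved in Proposition \ref{compattezza}, and uses only $u_\l^{2^*-2}\in L^{N/2}(\R^N)$). Hence its nonzero spectrum is a sequence of finite-multiplicity eigenvalues $\mu_1\ge\mu_2\ge\cdots\to0^+$, so the part of the spectrum in $(1,+\infty)$ is discrete; moreover any corresponding eigenfunction is a fixed point of $\frac1\mu L_\l^{-1}M_\l$, hence lies in $X$ by Lemma \ref{l-T-ben-def}.

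The crucial structural point is that $K_\l$ is exactly the $\langle\cdot,\cdot\rangle_\l$-orthogonal complement of $Z_\l$, and that it is invariant under $L_\l^{-1}M_\l$. Indeed $Z_\l$ solves \eqref{lin} with $\d=1$, i.e. $L_\l Z_\l=M_\l Z_\l$, so $L_\l^{-1}M_\l Z_\l=Z_\l$: $Z_\l$ is an eigenfunction with eigenvalue $1$. Furthermore, for every $v\in D^{1,2}(\R^N)$ one has $\langle v,Z_\l\rangle_\l=\langle v,L_\l Z_\l\rangle=\langle v,M_\l Z_\l\rangle=C(\l)\frac{N+2}{N-2}\int_{\R^N}u_\l^{2^*-2}vZ_\l$, so $v\in K_\l$ if and only if $\langle v,Z_\l\rangle_\l=0$. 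Since a self-adjoint operator leaves invariant the orthogonal complement of any of its eigenvectors, $L_\l^{-1}M_\l$ maps $K_\l$ into itself; therefore $P_\l L_\l^{-1}M_\l w=L_\l^{-1}M_\l w$ for every $w\in K_\l$, no matter which projection $P_\l$ one uses, and $T'_v(\l,u_\l)$ is simply $L_\l^{-1}M_\l$ restricted to the invariant subspace $K_\l$. Consequently its spectrum in $(1,+\infty)$ coincides, with multiplicities, with that of $L_\l^{-1}M_\l$ on all of $D^{1,2}(\R^N)$: passing to $K_\l$ only discards the eigenpair $(1,Z_\l)$, and when $\l=\l_j$ the further eigenfunctions for the eigenvalue $1$ listed in Lemma \ref{lemma-lin} carry a nontrivial spherical harmonic, hence are $\langle\cdot,\cdot\rangle_\l$-orthogonal to the radial function $Z_\l$ and already belong to $K_\l$.

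Finally I would count the eigenvalues of $L_\l^{-1}M_\l$ in $(1,+\infty)$ by Courant-Fischer: $\mu_k=\max_{\dim V=k}\min_{0\ne w\in V}\frac{\langle M_\l w,w\rangle}{\langle w,w\rangle_\l}$, so $\mu_k>1$ precisely when there is a $k$-dimensional subspace on which $\langle w,w\rangle_\l-\langle M_\l w,w\rangle<0$. But $\langle w,w\rangle_\l-\langle M_\l w,w\rangle=\int_{\R^N}\big(|\nabla w|^2-\frac{\l}{|x|^2}w^2\big)-C(\l)\frac{N+2}{N-2}\int_{\R^N}u_\l^{2^*-2}w^2$ is exactly the second variation at $u_\l$ of the energy associated with \eqref{1}, whose maximal negative-definite subspace has dimension $m(\l)$ by the very definition of the Morse index (the quantity computed in Proposition \ref{cor-1}). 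Hence $L_\l^{-1}M_\l$, and therefore $T'_v(\l,u_\l)$, has exactly $m(\l)$ eigenvalues, counted with multiplicity, in $(1,+\infty)$. The step that requires the most care is recognising that the relevant scalar product is the $L_\l$-weighted one rather than the plain $D^{1,2}$ one, so that $K_\l$ is literally the orthogonal complement of the degeneracy direction $Z_\l$ and the Lagrange-multiplier projection $P_\l$ becomes inert on $K_\l$; once this is in place the rest is the classical min-max dictionary, the only remaining technical issue being the compactness of $L_\l^{-1}M_\l$ against the non-compact embedding $D^{1,2}(\R^N)\hookrightarrow L^{2^*}(\R^N)$, which is dealt with by the weighted estimates of Lemma \ref{l-T-ben-def} exactly as in Proposition \ref{compattezza}.
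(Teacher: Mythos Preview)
Your proof is correct and proceeds by a genuinely different route from the paper's. The paper argues directly with the Lagrange-multiplier formulation: writing the eigenvalue equation for $T'_v(\l,u_\l)$ as \eqref{4.5}, it observes that $w_1=\frac{L}{\Lambda-1}Z_\l$ is always a particular solution when $\Lambda\neq1$, so the general solution splits as $w_1+\tilde w$ with $\tilde w$ solving the homogeneous equation \eqref{4.6}; since $w_1\notin K_\l$ while testing against $Z_\l$ shows $\tilde w\in K_\l$, the constraint $w\in K_\l$ forces $L=0$, and one is reduced to counting solutions of \eqref{4.6} with $\frac1\Lambda\in(0,1)$, which is the Morse index.

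Your approach is more structural: by equipping $D^{1,2}(\R^N)$ with the inner product $\langle\cdot,\cdot\rangle_\l$ you realise $L_\l^{-1}M_\l$ as a compact positive self-adjoint operator, identify $K_\l$ as the $\langle\cdot,\cdot\rangle_\l$-orthogonal complement of its $1$-eigenvector $Z_\l$, hence an invariant subspace on which the projection $P_\l$ acts as the identity, and then invoke Courant--Fischer. This explains conceptually \emph{why} the projection does not affect the eigenvalue count in $(1,+\infty)$: one is merely discarding (part of) the eigenspace for the eigenvalue $1$. The paper establishes the same conclusion by an explicit elimination of the multiplier. Your route yields a cleaner picture and makes the link to the second variation transparent via min--max; the paper's route is more hands-on and avoids having to verify compactness and self-adjointness of $L_\l^{-1}M_\l$ on the full space, though as you note these follow from the decay of $u_\l^{2^*-2}$ by the same estimates underlying Proposition~\ref{compattezza}. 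The only point where you are slightly informal is the claim that eigenfunctions automatically lie in $X$: this is not literally Lemma~\ref{l-T-ben-def} (which treats the nonlinear operator), but the same comparison-and-bootstrap argument applied to the linear equation $L_\l w=\frac1\mu M_\l w$ gives the required $L^\infty_\gamma$ bound, so the spectra on $K_\l$ and on $K_\l\cap X$ coincide.
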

\begin{proof} 
$\L$ is 
an eigenvalue for the linear operator $T'_v(\l,u_{\l})I$  if and only if 
$$\L I-T'_v(\l,u_{\l})I=0$$
has a nontrivial solution in $X\cap K_\l$. 
This means that we have  to find $w\in X\cap K_\l$, $w\neq 0$ which verifies
\begin{equation}\label{4.5}
-\Delta w-\frac{\l}{|x|^2}w= \frac 1 {\L} C(\l)\frac{N+2}{N-2}u_{\l}^{2^*-2}w+\frac{L}{\L} C(\l)\frac{N+2}{N-2}u_{\l}^{2^*-2}Z_\l \quad\hbox{in }\R^N
\end{equation}
for some $L=L(w)\in \R$ and $\frac 1{\L}\in (0,1)$. \\
Observe that, since $\L\neq 1$, the function $w_1=\frac{L}{\L-1}Z_\l$ is always a solution of \eqref{4.5} (that does not belong to $K_\l$) and all the other solutions of \eqref{4.5} are given by $w=w_1+\tilde w$ 
where $\tilde w \in X\cap K_\l$ satisfies
\begin{equation}\label{4.6}
-\Delta\tilde w-\frac{\l}{|x|^2}\tilde w= \frac 1 {\L} C(\l)\frac{N+2}{N-2}u_{\l}^{2^*-2} \tilde w\quad\hbox{in }\R^N.
\end{equation}
Now, if $\frac 1{\L}$ is not an eigenvalue of \eqref{4.6} then $\tilde w=0$ and \eqref{4.5} has only the solution $w_1$. But $w_1$  is not in $K_{\l}$
so that $w=0$ and $L=0$ in \eqref{4.5}.\\
If else, $\frac 1{\L}$ is an eigenvalue of \eqref{4.6} and $\tilde w$ a corresponding eigenfunction we can use $\tilde w$ as a test function in \eqref{4.5}, $Z_\l$ as a test function in  \eqref{4.6}, getting that 
$$\int_{\R^N}u_{\l}^{2^*-2}Z_\l \tilde w\, dx=0$$
so that $\tilde w\in K_\l$. Since $w_1\notin K_{\l}$ this implies $L=0$ in \eqref{4.5} so that equation \eqref{4.5} coincides with equation \eqref{4.6}. \\
We have shown so far that the number of the eigenvalues of $T'_v(\l,u_\l)$ counted with multiplicity in $(1,+\infty)$ is equal to the number of the eigenvalues of \eqref{4.6} counted with multiplicity in $(0,1)$, and this is the Morse index of $u_\l$.
\end{proof}
\noindent From Proposition \ref{p2} we have that the number of the eigenvalues of $T'_v(\l,u_\l)$ counted with multiplicity in $(1,+\infty)$ decreases by one going from $\l_j-\e$ to $\l_j+\e$ and $\e$ small enough in the space $\h$ (or $\h^h$) and this is sufficient to have the bifurcation.\\
We do not give the details of the global bifurcation result for problem \eqref{molt}, we only sketch the proof of the local bifurcation result to have an idea how to use the results of Propositions \eqref{compattezza} and \eqref{p2}.\\
Then the global bifurcation result will follow reasoning as in \cite[Theorem 3.3]{G}, (see also \cite{AM}). 
\begin{proposition}
The points $(\l_j,u_{\l_j})$ are nonradial bifurcation points for the curve $(\l,u_\l)$ of radial solutions of \eqref{molt}.
\end{proposition}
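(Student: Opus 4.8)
The plan is to apply the classical bifurcation theorem based on the jump of the Leray--Schauder index (Krasnoselskii's theorem for the local statement; for a continuum, Rabinowitz's theorem, arguing as in \cite[Theorem 3.3]{G} and \cite{AM}). The reason this works is that, although $u_\l$ is radially degenerate as a solution of \eqref{1}, once we pass to the projected equation $v=T(\l,v)$ it becomes, for $\l\neq\l_j$, an \emph{isolated, nondegenerate} solution, by part $ii)$ of Proposition \ref{compattezza}.

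First I would fix one of the symmetric subspaces $\h$ (or $\h^h$) and check that $T(\l,\cdot)$ maps it into itself. This holds because the nonlinearity $C(\l)(v^+)^{2^*-1}$, the resolvent $\left(-\Delta-\frac{\l}{|x|^2}I\right)^{-1}$, the weight $u_\l^{2^*-2}$, the function $Z_\l$ and hence the projection $P_\l$ are all equivariant under $O(N-1)$ (resp.\ under $\mg_h$), since $u_\l$ and $Z_\l$ are radial. By Proposition \ref{compattezza} the restriction $T(\l,\cdot)\colon\h\to\h$ is compact and continuous in $\l$, and $I-T'_v(\l,u_\l)$ is invertible on $\h$ for $\l\neq\l_j$ --- indeed the only solution of \eqref{prima} is $Z_\l\notin K_\l$, and this is unaffected by restriction. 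Consequently $u_\l$ is an isolated fixed point of $T(\l,\cdot)$ in $\h$, and its Leray--Schauder index there equals $(-1)^{\sigma_\h(\l)}$, where $\sigma_\h(\l)$ is the number, with multiplicity, of eigenvalues of $T'_v(\l,u_\l)$ in $(1,+\infty)$ acting on $\h$.

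Next I would identify $\sigma_\h(\l)$ with the Morse index $m_\h(\l)$ of $u_\l$ restricted to $\h$, by repeating verbatim the argument of Proposition \ref{p2} but retaining only the components along the $O(N-1)$-invariant (resp.\ $\mg_h$-invariant) spherical harmonics. By the Smoller--Wasserman results, for every $j$ the $\mu_j$-eigenspace of $-\Delta_{S^{N-1}}$ contains exactly one such invariant element; hence from Corollary \ref{cor-1} together with Lemma \ref{lemma-lin} one gets $m_\h(\l_j-\e)-m_\h(\l_j+\e)=1$ for $\e$ small. Thus the Leray--Schauder index of $u_\l$ changes sign as $\l$ crosses $\l_j$, and the bifurcation theorem produces a (continuum of) branch(es) of solutions of \eqref{molt} inside $\h$ (resp.\ $\h^h$) emanating from $(\l_j,u_{\l_j})$.

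It remains to see that the bifurcating solutions are genuinely nonradial. Near $(\l_j,u_{\l_j})$ any such solution has the form $u_\l+t\varphi+o(t)$ with $\varphi$ a nonzero element of the kernel of $I-T'_v(\l_j,u_{\l_j})$ in $\h$; by Lemma \ref{lemma-lin}, after quotienting out $Z_{\l_j}$ (which lies outside $K_{\l_j}$), this kernel is spanned by the functions $Z_{j,i}$ in \eqref{i13} attached to the invariant harmonic polynomials $Y_{j,i}$, and these are not radial. Hence the branch consists of nonradial functions in a punctured neighborhood of $(\l_j,u_{\l_j})$. The main obstacle in all of this is the parity bookkeeping: one must guarantee that the degree actually jumps, i.e.\ that the Morse-index change in the chosen symmetric subspace is \emph{odd}. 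This is precisely what the combination of Proposition \ref{p2}, Corollary \ref{cor-1} and the one-dimensionality of the invariant harmonics (Smoller--Wasserman) delivers, and it is the reason one works in $\h$ and $\h^h$ rather than on all of $X$, where the index change --- being the full multiplicity \eqref{dim-mu-j} --- could be even.
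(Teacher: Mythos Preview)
Your proof is correct and follows essentially the same route as the paper: restrict $T$ to the symmetric subspace $\h$ (or $\h^h$), use Proposition \ref{compattezza} for compactness and nondegeneracy at $\l\neq\l_j$, invoke Proposition \ref{p2} to identify the eigenvalue count of $T'_v(\l,u_\l)$ in $(1,+\infty)$ with the Morse index, and then apply Smoller--Wasserman to obtain an odd jump of the index across $\l_j$, so that the Leray--Schauder degree changes sign and bifurcation follows. The paper phrases this as a contradiction via homotopy invariance of the degree, but the content is identical.

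One small remark on the nonradiality step: a degree-theoretic bifurcation does not in general hand you a tangent direction $u_\l+t\varphi+o(t)$ with $\varphi$ in the kernel --- that description is proper to the Crandall--Rabinowitz simple-eigenvalue theorem. The paper's argument is simpler and suffices here: since $u_\l$ is radially nondegenerate in $K_\l$ (the only radial solution of \eqref{prima} is $Z_\l\notin K_\l$), the radial fixed points of $T(\l,\cdot)$ near the branch are exactly the $u_\l$, so any other fixed point produced by the bifurcation must be nonradial.
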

\begin{proof}
Assume by contradiction that $(\l_j,u_{\l_j})$ is not a bifurcation point for \eqref{molt}, for some $j$. Then there exists $\e_0>0$ such that $\forall \e\in (0,\e_0)$ and $\forall c\in (0,\e_0)$ 
$$I-T(\l,v)\neq 0$$
for any $\l\in (\l_j-\e,\l_j+\e)\subset(-\infty,0)$ and for any $v\in \h$ (or in $\h^h$) such that $\nor v-u_\l\nor_X\leq c$ and $v\neq u_\l$. \\
Let $\Gamma :=\{(\l,v)\in   (\l_j-\e,\l_j+\e)\times \h\,:\,\nor v-u_\l\nor_X\leq c \}$ and $\Gamma_\l:=\{v\in \h\hbox{ s.t. }(\l,v)\in \Gamma\}$. By the homotopy invariance of the Leray Schauder degree we have that
\begin{equation}\label{constant}
deg\left(I-T(\l,\cdot),\Gamma_\l,0\right) \hbox{ is constant on }(\l_j-\e,\l_j+\e).
\end{equation}
Since the linearized operator is invertible for $\l=\l_j-\e$ and $\l=\l_j+\e$ we can compute the Leray Schader degree and we have that
$$deg\left(I-T(\l_j\pm\e,\cdot),\Gamma_{\l_j\pm\e},0\right)
=(-1)^{\b(\l_j\pm\e)}$$
where $\b(\l) $  
is the number of the eigenvalues of $T'_v(\l,u_{\l})$  counted with multiplicity contained in $(1,+\infty)$, see \cite[Theorem 3.20]{AM}. Then Proposition \ref{p2}  implies that 
$$deg\left(I-T(\l_j-\e,\cdot),\Gamma_{\l_j-\e},0\right)=-deg\left(I-T(\l_j+\e,\cdot),\Gamma_{\l_j+\e},0\right)$$
contradicting \eqref{constant}. Then $(\l_j,u_{\l_j})$ is a bifurcation point for \eqref{molt} and the bifurcating solutions are nonradial since $u_\l$ is radially nondegenerate in $K_\l$.
\end{proof}

\noindent Finally we can state the global bifurcation result for \eqref{molt}.
\begin{proposition}\label{bif-molt}
Let us fix $j\in \N$ and let $\l_j$ be as defined in \eqref{lambda-j}. Then\\
i) If $j$ is odd there exists at least a continuum of nonradial solutions to \eqref{molt}, invariant with respect to $O(N-1)$, bifurcating from the pair $(\l_j,u_{\l_j})$.\\
ii) If $j$ is even there exist at least $\big[\frac N2\big]$ continua of nonradial solutions to
\eqref{molt} bifurcating from the pair $(\l_j,u_{\l_j})$. The first branch is $O(N-1)$ invariant, the second is $O(N-2)\times O(2)$ invariant, etc.
\end{proposition}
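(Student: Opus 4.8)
The plan is to obtain Proposition \ref{bif-molt} from the abstract global bifurcation theorem of Rabinowitz, in the form of \cite[Theorem 3.3]{G} (see also \cite[Theorem 3.20]{AM}), applied separately inside each of the invariant subspaces $\h$ and $\h^h$. The first point to settle is equivariance: because both $u_\l$ and $Z_\l$ are radial, the map $(\l,v)\mapsto T(\l,v)$ of \eqref{operator-T} commutes with the action of $O(N-1)$ and of each group $\mg_h$ --- the Nemytskii map $v\mapsto(v^+)^{2^*-1}$, the resolvent $\big(-\Delta-\frac{\l}{|x|^2}I\big)^{-1}$ and the projection $P_\l$ all do. Hence $T(\l,\cdot)$ leaves $\h$ and each $\h^h$ invariant, and by Proposition \ref{compattezza} it is, restricted to any of these closed subspaces, compact and continuous in $\l$, with $I-T'_v(\l,u_\l)I$ invertible whenever $\l\neq\l_j$.

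Next I would perform the degree computation in the symmetric subspace. By Proposition \ref{p2} restricted to $\h$ (resp.\ $\h^h$), the number $\b_\h(\l)$ of eigenvalues of $T'_v(\l,u_\l)$ in $(1,+\infty)$ equals the Morse index $m_\h(\l)$ of $u_\l$ in that subspace. Combining the Morse index formula of Corollary \ref{cor-1} with the Smoller--Wasserman analysis \cite{SW86,SW90} of invariant spherical harmonics --- which gives that, for the relevant $j$, the eigenspace of $-\Delta_{S^{N-1}}$ associated with $\mu_j$ contains exactly one $O(N-1)$-invariant function and, when $j$ is even, exactly one $\mg_h$-invariant function for every $h=1,\dots,[\tfrac N2]$, whereas when $j$ is odd only $h=1$ contributes --- one deduces $m_\h(\l_j-\e)-m_\h(\l_j+\e)=1$ for $\e$ small. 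Consequently the Leray--Schauder index of $u_\l$ flips across $\l_j$, \[ \deg\big(I-T(\l_j-\e,\cdot),\Gamma_{\l_j-\e},0\big)=-\deg\big(I-T(\l_j+\e,\cdot),\Gamma_{\l_j+\e},0\big), \] i.e.\ the crossing number is odd (equal to $1$), exactly the hypothesis needed by the global bifurcation theorem.

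With an odd crossing at $(\l_j,u_{\l_j})$ established in $\h$ (resp.\ in each $\h^h$), the theorem produces a connected component $\mathcal{C}_j$ of the closure of the set of nontrivial zeros of $I-T(\l,v)$ in $(-\infty,0)\times\h$ (resp.\ $(-\infty,0)\times\h^h$) issuing from $(\l_j,u_{\l_j})$, which is either unbounded or meets the radial branch again at some $(\l_k,u_{\l_k})$ with $k\neq j$; in both cases it is a genuine continuum. The solutions on $\mathcal{C}_j$ are nonradial, because by Lemma \ref{lemma-lin} the only radial element of the kernel of the linearized operator \eqref{prima} is $Z_\l\notin K_\l$, so $u_\l$ is radially nondegenerate in $K_\l$ and no radial branch can bifurcate. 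When $j$ is even the $[\tfrac N2]$ components sit in the $[\tfrac N2]$ pairwise distinct subspaces $\h^h$ and, near the bifurcation point, are told apart by the symmetry type of their dominant nonradial mode (the degree-$j$ $\mg_h$-invariant harmonic), so they are distinct. Finally the bound $\sup_{x\in\R^N}(1+|x|)^\g|v_\l(x)|\le C$ is automatic, since every such solution lives in $X=D^{1,2}(\R^N)\cap L^\infty_\g(\R^N)$.

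The main obstacle is not a hard estimate --- compactness, invertibility and the decay are already packaged in Proposition \ref{compattezza} and Lemma \ref{l-T-ben-def} --- but the organizational work of verifying the hypotheses of the global bifurcation theorem in all the invariant subspaces at once: that the equivariance is exact, that the crossing number really equals $1$ in every $\h^h$ we invoke, and that the resulting continua are genuinely different. The Smoller--Wasserman parity count for the $\mg_h$-invariant harmonics of degree $j$ is the delicate ingredient, and it is exactly what dictates the odd/even split between parts i) and ii).
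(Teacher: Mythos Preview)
Your proposal is correct and follows exactly the strategy the paper itself outlines: the paper does not give a detailed proof of this proposition, stating only that the global result ``will follow reasoning as in \cite[Theorem 3.3]{G}'' after having established compactness (Proposition \ref{compattezza}), the eigenvalue count (Proposition \ref{p2}), and the one-dimensional jump of the Morse index in $\h$ (resp.\ $\h^h$) via the Smoller--Wasserman results. You have simply filled in the details the authors omit --- equivariance of $T$, the degree flip in each invariant subspace, the Rabinowitz alternative, and nonradiality from the radial nondegeneracy in $K_\l$ --- all of which are implicit in the surrounding text.
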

\noindent The final step for the proof of Theorem \ref{bif} will be to show that the solutions we have found in Theorem \ref{bif-molt} are indeed solutions of \eqref{1}. This will be done in the next section.

\subsection{The Lagrange multiplier is zero }\label{s5} 
In the previous section we proved the existence of solutions $u_{\e,n}$ and parameters $\l_{\e,n}, L_\e$ verifying
\begin{eqnarray}\label{u-epsilon}
\!\!\!\!\!\!\!-\Delta u_{\e,n}-\frac{\l_{\e,n}}{|x|^2}u_{\e,n} -C(\l_{\e,n})u_{\e,n}^{2^*-1}\!=\!L_\e C(\l_{\e,n})\frac{N+2}{N-2}u_{{\e,n}}^{2^*-2}Z_{\e,n} \ \hbox{in }\R^N
\end{eqnarray} 
where $Z_{\e,n}=Z_{\l_{\e,n}}$, 
with $\l_{\e,n}$ and $u_{\e,n}$ and $L_\e$ such that $\l_{0,n}=\l_n$ and $u_{\e,n}=u_{\l_n}$ and $L_0=0$. \\
In the following we denote by $C$ a generic constant (independent of $n$ and $\e$) which
can change from line to line.\\
First we prove a bound on $L_\e$.
\begin{lemma}
We have
$$|L_\e|\leq C.$$
\end{lemma}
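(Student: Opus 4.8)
The plan is to establish a uniform (in $\e$ and $n$) bound on the Lagrange multiplier $L_\e$ appearing in \eqref{u-epsilon} by testing the equation against the degeneracy direction $Z_{\e,n}$ and exploiting the orthogonality condition $u_{\e,n}\in K_{\l_{\e,n}}$ built into the construction. Recall that $Z_\l$ solves the linearized equation, so multiplying \eqref{u-epsilon} by $Z_{\e,n}$ and integrating by parts (all integrals being finite because $u_{\e,n}\in X$ has the weighted decay $\nor u_{\e,n}\nor_\gamma\le C$ with $\frac{N-2}2<\gamma<N-2$, and $Z_\l$ decays like $|x|^{-\frac{N-2}2}$), most terms should combine to a manageable expression in which $L_\e$ is isolated against the positive quantity $C(\l_{\e,n})\frac{N+2}{N-2}\int_{\R^N}u_{\e,n}^{2^*-2}Z_{\e,n}^2$.

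First I would write out $\int_{\R^N}Z_{\e,n}\big(-\Delta u_{\e,n}-\frac{\l_{\e,n}}{|x|^2}u_{\e,n}\big)=\int_{\R^N}u_{\e,n}\big(-\Delta Z_{\e,n}-\frac{\l_{\e,n}}{|x|^2}Z_{\e,n}\big)$, and use the equation \eqref{lin}/\eqref{lin-2} satisfied by $Z_{\e,n}=Z_{\l_{\e,n}}$, namely $-\Delta Z_{\e,n}-\frac{\l_{\e,n}}{|x|^2}Z_{\e,n}=N(N+2)\nu_{\l_{\e,n}}^2 u_{\e,n}^{2^*-2}Z_{\e,n}=C(\l_{\e,n})\frac{N+2}{N-2}u_{\e,n}^{2^*-2}Z_{\e,n}$ (this is the identity relating the two constants, since $C(\l)=N(N-2)\nu_\l^2$ and $\frac{N+2}{N-2}N(N-2)=N(N+2)$). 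Hence the left side equals $C(\l_{\e,n})\frac{N+2}{N-2}\int u_{\e,n}^{2^*-1}Z_{\e,n}$, which exactly cancels the $C(\l_{\e,n})u_{\e,n}^{2^*-1}$ term on the right of \eqref{u-epsilon} after testing. We are therefore left with $0=L_\e C(\l_{\e,n})\frac{N+2}{N-2}\int_{\R^N}u_{\e,n}^{2^*-2}Z_{\e,n}^2$, which would force $L_\e=0$ — but that cannot be the content of a lemma asserting only $|L_\e|\le C$, so in fact the cancellation must be only approximate: since $u_{\e,n}$ is close to but not equal to the radial solution $u_{\l_n}$ (it is $u_{\l_n}$ exactly only at $\e=0$), the constant $C(\l_{\e,n})\nu_{\l_{\e,n}}^2 u_{\e,n}^{2^*-2}$ in the equation for $Z_{\e,n}$ is not literally the potential appearing in \eqref{u-epsilon}; the correct approach is to test \eqref{u-epsilon} against $Z_{\e,n}$, use the true equation for $Z_{\e,n}$, and bound the residual. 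Thus I would obtain $L_\e\,C(\l_{\e,n})\tfrac{N+2}{N-2}\int u_{\e,n}^{2^*-2}Z_{\e,n}^2 = \int_{\R^N}\big(C(\l_{\e,n})\tfrac{N+2}{N-2}u_{\e,n}^{2^*-2}-C(\l_{\e,n})\tfrac{N+2}{N-2}u_{\l_n}^{2^*-2}\big)u_{\e,n}Z_{\e,n}$, i.e. the residual is controlled by $|u_{\e,n}^{2^*-2}-u_{\l_n}^{2^*-2}|$ times weighted-decaying factors.

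The key estimates are then: (i) a uniform lower bound $\int_{\R^N}u_{\e,n}^{2^*-2}Z_{\e,n}^2\ge c>0$ — this follows because, as $\e\to0$ at fixed $n$, $u_{\e,n}\to u_{\l_n}$ in $X$ and $Z_{\e,n}\to Z_{\l_n}$, so the integral converges to $\int u_{\l_n}^{2^*-2}Z_{\l_n}^2>0$; one should note this bound need only be shown for $\e$ in a fixed small neighborhood of $0$, which is the regime the lemma concerns; (ii) an upper bound on the residual using the weighted $L^\infty_\gamma$ control: $u_{\e,n}, u_{\l_n}$ are bounded in $L^\infty_\gamma$ and $Z_{\e,n}(x)$ decays like $|x|^{-\frac{N-2}2}$ at infinity and like $|x|^{\frac{N-2}{2}(\nu_\l-1)}$ at the origin, so the integrand is integrable with a bound independent of $\e, n$, and the whole residual is $O(\nor u_{\e,n}-u_{\l_n}\nor_X)\le C$. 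Combining (i) and (ii) gives $|L_\e|\le C$. The main obstacle I anticipate is the careful bookkeeping of the integrability near $x=0$ and $x=\infty$: one must check that products like $u_{\e,n}^{2^*-2}Z_{\e,n}^2$ and $(u_{\e,n}^{2^*-2}-u_{\l_n}^{2^*-2})u_{\e,n}Z_{\e,n}$ are integrable with bounds uniform in the parameters, which uses precisely the range $\frac{N-2}2<\gamma<N-2$ and the exponent $2^*-2=\frac4{N-2}$; a secondary subtlety is verifying that the boundary terms in the integration by parts $\int Z_{\e,n}(-\Delta u_{\e,n})=\int u_{\e,n}(-\Delta Z_{\e,n})$ vanish, which again rests on the decay rates of $u_{\e,n}$ and $Z_{\e,n}$ and can be handled by the same cutoff-sequence argument used in the proof of Proposition \ref{i2}.
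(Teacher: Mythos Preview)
Your starting move---test \eqref{u-epsilon} against $Z_{\e,n}$---is exactly what the paper does, but from there the paper is far more direct. After testing, the paper simply writes
\[
L_\e\, C(\l_{\e,n})\tfrac{N+2}{N-2}\int_{\R^N} u_{\l_{\e,n}}^{2^*-2}Z_{\e,n}^2
=\int_{\R^N}\nabla u_{\e,n}\cdot\nabla Z_{\e,n}
-\l_{\e,n}\int_{\R^N}\frac{u_{\e,n}Z_{\e,n}}{|x|^2}
-C(\l_{\e,n})\int_{\R^N} u_{\e,n}^{2^*-1}Z_{\e,n},
\]
bounds the first two terms on the right by Cauchy--Schwarz in the equivalent Hardy norm (Lemma~\ref{l2}), bounds the third by H\"older and Sobolev, and concludes. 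No equation for $Z_{\e,n}$, no cancellations, no orthogonality condition are invoked at all; the inequality $|L_\e|\le C$ drops out immediately from $\|u_{\e,n}\|_{1,2},\|Z_{\e,n}\|_{1,2}\le C$ together with the fact that the denominator $\int u_{\l_{\e,n}}^{2^*-2}Z_{\e,n}^2$ is an explicit positive number.

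By contrast, your attempt to exploit the linearized equation for $Z_{\e,n}$ gets tangled in algebra. Two concrete issues: (a) the equation satisfied by $Z_{\e,n}=Z_{\l_{\e,n}}$ involves the \emph{radial} potential $u_{\l_{\e,n}}^{2^*-2}$, not $u_{\e,n}^{2^*-2}$, so when you write $-\Delta Z_{\e,n}-\tfrac{\l_{\e,n}}{|x|^2}Z_{\e,n}=C(\l_{\e,n})\tfrac{N+2}{N-2}\,u_{\e,n}^{2^*-2}Z_{\e,n}$ you have already mixed up the two functions; (b) even granting your version of the equation, the left side would be $\tfrac{N+2}{N-2}\int u_{\e,n}^{2^*-1}Z_{\e,n}$, which does \emph{not} cancel the nonlinear term $\int u_{\e,n}^{2^*-1}Z_{\e,n}$ from \eqref{u-epsilon}, so the displayed ``residual'' identity you arrive at is simply false. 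If you do the bookkeeping correctly---use $u_{\l_{\e,n}}^{2^*-2}$ in the equation for $Z$, then invoke the orthogonality $u_{\e,n}\in K_{\l_{\e,n}}$---the gradient/Hardy block actually vanishes and you are left with $L_\e\tfrac{N+2}{N-2}\int u_{\l_{\e,n}}^{2^*-2}Z_{\e,n}^2=-\int u_{\e,n}^{2^*-1}Z_{\e,n}$, which is a perfectly good route to the bound; but this is more work than the paper's three-line estimate and your write-up does not get there.
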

\begin{proof}
Using $Z_{\e,n}$ as a test function in \eqref{u-epsilon} we get
\begin{align}
& L_\e C(\l_{\e,n})\frac{N+2}{N-2}\int_{\R^N}  u_{\l_{\e,n}}^{2^*-2}Z_{\e,n}^2=  \int_{\R^N}\na u_{\e,n}\cdot \na Z_{\e,n} \nonumber\\
&-\int_{\R^N} \frac{\l_{\e,n}}{|x|^2}u_{\e,n} Z_{\e,n}  -C(\l_{\e,n})\int_{\R^N}u_{\e,n}^{2^*-1} Z_{\e,n} . \nonumber
\end{align}
Using Lemma \ref{l2}, the Holder and the Sobolev inequality we get
$$ L_\e C(\l_{\e,n})\frac{N+2}{N-2}\int_{\R^N}  u_{\l_{\e,n}}^{2^*-2}Z_{\e,n}^2\leq C\nor u_{\e,n}\nor_{1,2}\nor Z_{\e,n}\nor _{1,2}$$
so that the claim follows.
\end{proof}
\begin{proposition}
Let $u_{\e,n}$ be the solution of \eqref{u-epsilon}. Then $L_\e=0$ in  \eqref{u-epsilon} for $\e$ small enough.
\end{proposition}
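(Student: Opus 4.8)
The plan is to determine the value of $L_\e$ by testing \eqref{u-epsilon} against the infinitesimal generator of dilations and exploiting the scale invariance of the critical problem. Set $Dw:=x\cdot\nabla w+\frac{N-2}2 w$, the generator of the rescaling $w\mapsto w_t:=t^{\frac{N-2}2}w(t\,\cdot)$. The energy
$$J_{\e,n}(w)=\frac12\int_{\R^N}\Big(|\nabla w|^2-\frac{\l_{\e,n}}{|x|^2}w^2\Big)-\frac{C(\l_{\e,n})}{2^*}\int_{\R^N}|w|^{2^*}$$
is invariant under $w\mapsto w_t$ — each of its three terms is separately invariant, precisely because the Hardy weight $|x|^{-2}$ and the exponent $2^*$ are the scale–critical ones — so differentiating $J_{\e,n}(w_t)\equiv J_{\e,n}(w)$ at $t=1$ gives the Pohozaev–type identity $J_{\e,n}'(w)[Dw]=0$; equivalently $\int_{\R^N}\big[(-\Delta-\frac{\l_{\e,n}}{|x|^2})w-C(\l_{\e,n})w^{2^*-1}\big]Dw=0$. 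For $w=u_{\e,n}$ this is legitimate: the bound $\sup_{\R^N}(1+|x|)^\gamma|u_{\e,n}|\le C$ from Theorem~\ref{bif}, together with interior elliptic estimates applied to \eqref{u-epsilon}, yields $|\nabla u_{\e,n}(x)|\le C(1+|x|)^{-\gamma-1}$, while near the origin $u_{\e,n}(x)\sim c|x|^{\frac{N-2}2(\nu_{\l_{\e,n}}-1)}$ with positive exponent (recall $\l_{\e,n}<0$); hence $Du_{\e,n}\in D^{1,2}(\R^N)$ and all boundary contributions at $|x|\to0$ and $|x|\to\infty$ in the Pohozaev identity vanish.

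First I would test \eqref{u-epsilon} against $Du_{\e,n}$. By the identity above the whole left–hand side vanishes, leaving
$$0=L_\e\,C(\l_{\e,n})\,\frac{N+2}{N-2}\int_{\R^N}u_{\l_{\e,n}}^{2^*-2}\,Z_{\e,n}\,Du_{\e,n}\,dx .$$
Since $C(\l_{\e,n})=N(N-2)\nu_{\l_{\e,n}}^2\ne0$, proving the Proposition reduces to showing $\int_{\R^N}u_{\l_{\e,n}}^{2^*-2}Z_{\e,n}\,Du_{\e,n}\ne0$ for $\e$ small. (An alternative would be to test against $Z_{\e,n}$ itself and use $u_{\e,n}\in K_{\l_{\e,n}}$, which gives $L_\e\propto\int u_{\e,n}^{2^*-1}Z_{\e,n}$; this last integral vanishes at $\e=0$ because $\int u^{2^*}$ is scale–invariant, but only the Pohozaev route makes $L_\e=0$ hold exactly for all small $\e$.)

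The crucial algebraic fact is that $Z_\l$ is, up to a constant, the tangent vector to the dilation orbit of the radial solution: a direct computation from \eqref{sol-rad} gives $u_{\d,\l}(x)=\mu^{\frac{N-2}2}u_\l(\mu x)$ with $\mu=\d^{1/\nu_\l}$, hence $Du_\l=\frac{(N-2)\nu_\l}{2}Z_\l$. Writing $Du_{\e,n}=Du_{\l_{\e,n}}+D(u_{\e,n}-u_{\l_{\e,n}})$ and using $Z_{\e,n}=Z_{\l_{\e,n}}$, the integral above equals
$$\frac{(N-2)\nu_{\l_{\e,n}}}{2}\int_{\R^N}u_{\l_{\e,n}}^{2^*-2}Z_{\l_{\e,n}}^{2}\,dx\;+\;\int_{\R^N}u_{\l_{\e,n}}^{2^*-2}Z_{\l_{\e,n}}\,D(u_{\e,n}-u_{\l_{\e,n}})\,dx .$$
The first term is positive (and continuous in $\l$), while the second tends to $0$ as $\e\to0$: on the bifurcation branch $(\l_{\e,n},u_{\e,n})\to(\l_n,u_{\l_n})$ in $(-\infty,0)\times X$, the weighted derivatives of $u_{\e,n}-u_{\l_{\e,n}}$ go to $0$ locally uniformly by elliptic regularity, and the weight $u_{\l_{\e,n}}^{2^*-2}Z_{\l_{\e,n}}$ decays fast enough at infinity and vanishes at a positive rate at the origin to dominate. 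Hence for $\e$ small the integral is strictly positive, and the displayed identity forces $L_\e=0$.

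I expect the main obstacle to be the rigorous justification of the Pohozaev identity for the \emph{nonradial} solutions $u_{\e,n}$, i.e.\ checking that $Du_{\e,n}\in D^{1,2}(\R^N)$ and that the surface terms on $\partial B_R$ and $\partial B_\rho$ vanish as $R\to\infty$ and $\rho\to0$. This would be handled by upgrading the weighted $L^\infty$ bound of Theorem~\ref{bif} to the gradient bounds $|\nabla u_{\e,n}(x)|\le C(1+|x|)^{-\gamma-1}$ and $|\nabla u_{\e,n}(x)|\le C|x|^{\frac{N-2}2(\nu_{\l_{\e,n}}-1)-1}$ near $0$ via Schauder estimates on dyadic annuli, after which the surface terms are of order $R^{N-2-2\gamma}\to0$ (using $\gamma>\frac{N-2}2$) and $\rho^{\,N-2+(N-2)(\nu_{\l_{\e,n}}-1)}\to0$, so the identity follows by integrating over $B_R\setminus B_\rho$ and passing to the limit. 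Everything else — the formula $Du_\l=\frac{(N-2)\nu_\l}2 Z_\l$, the value $C(\l)=N(N-2)\nu_\l^2$, and the continuity of the branch in $X$ (which also gives $u_{\e,n}\ge0$ near the branch, so that the nonlinearity is genuinely $u_{\e,n}^{2^*-1}$) — is routine.
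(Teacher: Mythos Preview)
Your approach is correct and is essentially the same Pohozaev/dilation argument as the paper's: the paper applies the general Pohozaev identity \eqref{pohozaev} with $f(x,u)$ including the multiplier term, subtracts the relation obtained by testing \eqref{u-epsilon} against $u_{\e,n}$, and arrives at $L_\e\int_{\R^N}\big(u_{\e,n}^{2^*-1}Z_{\e,n}+u_{\e,n}^{2^*-1}\,x\cdot\nabla Z_{\e,n}\big)=0$, whose limiting integral as $\e\to0$ it asserts to be nonzero. Your packaging via the dilation generator $D$ and the identity $Du_\l=\tfrac{(N-2)\nu_\l}{2}Z_\l$ is a bit cleaner---it makes the limiting integral $\tfrac{(N-2)\nu_{\l_n}}{2}\int u_{\l_n}^{2^*-2}Z_{\l_n}^2>0$ manifestly nonzero---but the underlying idea is identical.
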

\begin{proof}
Applying the Pohozaev identity \eqref{pohozaev} with $f(x,u)=\frac{\l_{\e,n}}{|x|^2} u+C(\l_{\e,n})u^{2^*-1}+L_\e C(\l_{\e,n})\frac{N+2}{N-2}u^{2^*-2}Z_{\e,n}$
we get
\begin{eqnarray}
&&\int_{\R^N}|\na u_{\e,n}|^2\, -\frac{N}{N-2}\int_{\R^N}\frac{\l_{\e,n}}{|x|^2}u_{\e,n}^2\,-C(\l_{\e,n})\int_{\R^N}u_{\e,n}^{2^*}\, \nonumber\\
&&-\frac{2N}{N-2}L_\e C(\l_{\e,n})\int_{\R^N}u_{\e,n}^{2^*-1}Z_{\e,n}
+\frac 2{N-2}\int_{\R^N}\frac{\l_{\e,n}}{|x|^2}u_{\e,n}^2\nonumber\\
&&-\frac 2{N-2}L_\e C(\l_{\e,n})\int_{\R^N}u_{\e,n}^{2^*-1}x\cdot \na Z_{\e,n}=0\nonumber.
\end{eqnarray}
Using $u_{\e,n}$ as a test function in \eqref{u-epsilon} then we get
$$L_\e\int_{\R^N} \left(u_{\e,n}^{2^*-1}Z_{\e,n}+u_{\e,n}^{2^*-1}x\cdot \na Z_{\e,n}\right)=0$$
and this implies $L_\e=0$ if we show that the integral is different from zero. Let us recall that $u_{\e,n}\to u_{\l_n}$, $ Z_{\e,n}\to Z_{\l_n}$ as $\e\to 0$. Moreover, by the definition of $Z_{\e,n}$ and since $\l_n<0$ we get $Z_{\e,n}=O\left((1+|x|)^{2-N}\right)$ and  $|\nabla Z_{\e,n}|=O\left((1+|x|)^{1-N}\right)$. Finally since $u_{\e,n}\in X$ we have that $u_{\e,n}\in X$ and then $u_{\e,n}=O\left((1+|x|)^{\gamma}\right)$ with $\frac{N-2}2<\gamma<N-2$. So by the dominate convergence theorem we derive that
$$ \int_{\R^N} \left(u_{\e,n}^{2^*-1}Z_{\e,n}+u_{\e,n}^{2^*-1}x\cdot \na Z_{\e,n}\right)\to \int_{\R^N}u_{\l_n}^{2^*-1}x\cdot \na Z_{\l_n}\neq 0$$
so that $L_\e=0$ if $\e$ small enough, concluding the proof.
\end{proof}

\sezione{The subcritical case $1<p<\frac{N+2}{N-2}$}\label{s6} 
Let us start this section recalling some known facts. 
Next theorem collects some results of different authors (see \cite{GNN} and \cite{S}).
\begin{theorem}\label{d1}
Let $1<p<\frac{L+2}{L-2}$ and $v_L$ be the unique positive solution of 
\begin{equation}\label{palla}
\left\{\begin{array}{ll}
-v''-\frac{L-1}rv'=v^p& \hbox{ in }(0,1)\\
v>0&\hbox{ in }(0,1)\\
v'(0)=v(1)=0
\end{array}\right.
\end{equation}
where $L$ is a real number greater than $1$.
Then $v_L$ is nondegenerate and its Morse index is $1$.
\end{theorem}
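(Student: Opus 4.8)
The plan is to reduce Theorem~\ref{d1} to two facts about the linearized operator
$$\mathcal{L}\phi:=-\phi''-\frac{L-1}{r}\phi'-p\,v_L^{p-1}\phi,\qquad r\in(0,1),$$
acting on functions subject to $\phi'(0)=\phi(1)=0$: that it has trivial kernel (nondegeneracy) and exactly one negative direction (Morse index $1$). I will use freely the elementary consequences of \eqref{palla} obtained by integrating $-(r^{L-1}v_L')'=v_L^p\,r^{L-1}$: that $v_L$ is regular at the origin ($v_L(0)>0$, $v_L'(0)=0$, $v_L''(0)$ finite, the singular companion solution being discarded), that $r^{L-1}v_L'(r)=-\int_0^r v_L^p s^{L-1}\,ds<0$ so $v_L$ is strictly decreasing, and that $v_L'(1)<0$.

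The key object is the function attached to the scaling invariance of the equation,
$$w(r):=r\,v_L'(r)+\frac{2}{p-1}\,v_L(r).$$
Differentiating at $\mu=1$ the family $v_\mu(r):=\mu^{2/(p-1)}v_L(\mu r)$, each member of which solves $-v''-\frac{L-1}{r}v'=v^p$ on the interval where it is defined, gives $\partial_\mu v_\mu\big|_{\mu=1}=w$ and, after differentiating the equation, $\mathcal{L}w=0$ on $(0,1)$. Moreover $w$ is regular at $0$ ($w(0)=\frac{2}{p-1}v_L(0)>0$, $w'(0)=0$) and $w(1)=v_L'(1)+\frac{2}{p-1}v_L(1)=v_L'(1)<0$. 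Since among the solutions of $\mathcal{L}\phi=0$ the one linearly independent of $w$ fails to satisfy $\phi'(0)=0$ (its behaviour at the origin is $r^{2-L}$ if $L>2$, $\log r$ if $L=2$, and has infinite derivative if $1<L<2$), the solution space of the linearized problem with $\phi'(0)=0$ is spanned by $w$; as $w(1)\neq 0$ it contains no nontrivial element vanishing at $r=1$, i.e.\ $v_L$ is nondegenerate.

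For the Morse index I consider the quadratic form $Q(\phi)=\int_0^1\big(|\phi'|^2-p\,v_L^{p-1}\phi^2\big)r^{L-1}\,dr$. Multiplying \eqref{palla} by $v_L r^{L-1}$ and integrating by parts gives $\int_0^1|v_L'|^2 r^{L-1}=\int_0^1 v_L^{p+1}r^{L-1}$, whence $Q(v_L)=(1-p)\int_0^1 v_L^{p+1}r^{L-1}<0$; since $v_L$ itself is admissible, the Morse index is at least $1$. For the opposite inequality the natural route is the ground-state (Nehari) characterization: with $J(\phi)=\frac12\int_0^1|\phi'|^2 r^{L-1}-\frac1{p+1}\int_0^1|\phi|^{p+1}r^{L-1}$ and $G(\phi)=\int_0^1|\phi'|^2 r^{L-1}-\int_0^1|\phi|^{p+1}r^{L-1}$, minimize $J$ over $\mathcal{N}=\{\phi\neq0:G(\phi)=0\}$ in the weighted space $\{\phi:\int_0^1|\phi'|^2 r^{L-1}<\infty,\ \phi(1)=0\}$. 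Because $1<p<\frac{L+2}{L-2}$ the embedding of this space into $L^{p+1}((0,1),r^{L-1}dr)$ is compact, so the infimum is attained by some $\tilde v\ge0$; the Lagrange multiplier vanishes (pair $J'(\tilde v)=\mu G'(\tilde v)$ with $\tilde v$ and use $G(\tilde v)=0$, $\langle G'(\tilde v),\tilde v\rangle=(1-p)\|\tilde v\|^2\neq0$), so $\tilde v$ solves \eqref{palla} and hence $\tilde v=v_L$ by uniqueness. Thus $v_L$ minimizes $J$ on $\mathcal{N}$; since $G'(v_L)\neq0$, $\mathcal{N}$ is a $C^1$ manifold of codimension one near $v_L$, and because $v_L$ is also a free critical point of $J$ the constrained second variation reads $Q(\phi)=J''(v_L)[\phi,\phi]\ge0$ for every $\phi\in\ker G'(v_L)$. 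A quadratic form nonnegative on a subspace of codimension one is negative definite on no subspace of dimension $\ge2$; hence the Morse index is $\le1$, and therefore exactly $1$.

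The delicate point is the last step: one must make the variational machinery work for a possibly non-integer ``dimension'' $L>1$, i.e.\ secure the compact embedding (equivalently, existence of the Nehari minimizer), which is precisely where the subcriticality assumption $p<\frac{L+2}{L-2}$ enters; this can be handled through weighted Sobolev inequalities of the type in \cite{GGN2}, together with the uniqueness statement of \cite{S}. A purely ODE alternative for ``Morse index $\le1$'' is to invoke the Sturm oscillation theorem, by which the (radial) Morse index equals the number of zeros of $w$ in $(0,1)$, and to prove directly that $w$ has exactly one zero; this is feasible but more technical, so I would favour the Nehari argument above.
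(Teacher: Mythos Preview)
The paper does not prove Theorem~\ref{d1}: it is introduced with the sentence ``Next theorem collects some results of different authors (see \cite{GNN} and \cite{S})'' and is then used as a black box in the proof of Theorem~\ref{d2} and in the linearized analysis of Section~\ref{s6}. There is therefore no argument in the paper to compare yours against.

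That said, your sketch is a correct reconstruction of the standard proof behind those citations. The nondegeneracy step via the scaling field $w=rv_L'+\tfrac{2}{p-1}v_L$ is exactly the classical argument (and is essentially what is in \cite{S}): $w$ is, up to scalars, the unique solution of $\mathcal{L}\phi=0$ regular at the origin, and $w(1)=v_L'(1)<0$ rules out a nontrivial kernel. For the Morse index, the Nehari/ground-state route you outline is clean and correct; the only substantive point, as you rightly flag, is the compact embedding of $\{\phi:\int_0^1|\phi'|^2r^{L-1}<\infty,\ \phi(1)=0\}$ into $L^{p+1}((0,1),r^{L-1}dr)$ for $p<\tfrac{L+2}{L-2}$, which holds for real $L>2$ via the weighted radial estimate $|\phi(r)|\le C\,r^{(2-L)/2}\|\phi'\|$ (the same Ni-type bound the paper invokes elsewhere, cf.\ \cite{Ni}). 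Your Sturm alternative---counting zeros of $w$ in $(0,1)$---is also standard and is in fact closer in spirit to the original ODE arguments in \cite{S}; note however that a direct Sturm comparison of $w$ with $v_L'$ goes the wrong way (since $pv_L^{p-1}>pv_L^{p-1}-\tfrac{L-1}{r^2}$, it only says $w$ oscillates \emph{more} than $v_L'$), so if you pursue that route you will need a slightly different comparison to bound the number of zeros of $w$ from above.
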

\begin{remark} {\rm
Theorem \ref{d1} allows to establish the existence of the branch of radial solutions $u_\l$ as stated in Theorem \ref{d2} in the Introduction. Moreover, using the transformation $\mathcal{L}_p$
we are able to find the behaviour of the radial solution $u_\l$ at zero, see \eqref{origin}.}
\end{remark} 
\begin{remark}  {\rm As $\l>0$ problem \eqref{i10}  has been studied in \cite{CG}. In this case the authors proved the existence of a unique radial solution $u_\l$ and his behaviour near the origin, which is exactly the same as in \eqref{origin}.
Their proof relies on the the moving plane method, which ensures that every positive solution is radial, and on the phase plane analysis of the radial solutions. Both steps strongly rely on the hypothesis that $\l>0$ and cannot be extended to $\l\leq 0$. Using the map $\mathcal{L}_p$ we easily obtain a new proof of the results of \cite{CG} and we extend them to the case $\l<0$.}
\end{remark} 
\begin{remark}\label{R1}{\rm
The nondegeneracy result in Theorem \ref{d1} and the implicit function theorem imply that the function $\l\rightarrow v_\l$ is $C^1$.}
\end{remark}

\begin{proof}[Proof of Theorem \ref{d2}]
Let $u(r)$ be a radial solution to \eqref{i10} and let $v(r)=\mathcal{L}_p(u(r))$ as defined in \eqref{i1}.
From Proposition \ref{i2} we know that the transformed function $v(r)$ satisfies
\begin{equation}\label{eqv}
\left\{\begin{array}{ll}
-v''-\frac{M-1}rv'=A(\l,p)v^p & \hbox{ in }(0,1)\\
v>0& \hbox{ in }(0,1)\\
v(1)=0
\end{array}\right.
\end{equation}
with  $M$ as in \eqref{i6} and $A(\l,p)$ as in \eqref{i5}.\par
Moreover, a straightforword 
computation shows that if $1<p<\frac{N+2}{N-2}$ then $1<p<\frac{M+2}{M-2}$.
Then, by \eqref{i2b} we have that 
\begin{equation}\label{energia}
\int_0^{1}r^{M-1}\left(v'(r)\right)^2=\frac 1{\nu_\l}\int_0^{1}s^{N-1}
\left(u'(s)^2-\frac\l{s^2}u^2(s)\right)\leq C.
\end{equation}
We want to use  \eqref{energia} to prove that the function $v$ satisfies $v'(0)=0$ also. This will imply the existence and uniqueness result using
Theorem \ref{d1} with $L=M$.\\
To this end we let $\widetilde v(r)=\frac 1{r^ {M-2}}v\left( \frac 1r\right)$. The function $\widetilde v$ solves the equation 
\begin{equation}\label{trasf-kelv}
\left\{\begin{array}{ll}
-(r^{M-1}\widetilde v'(r))'= A(\l,p) r^{(M-2)p-3}\widetilde v^p(r) & \hbox{ in }(1,+\infty)\\
\widetilde v(1)=0
\end{array}\right.
\end{equation}
and satisfies $$\int_1^{+\infty}r^{M-1}\left(\widetilde v'(r)\right)^2\leq C.$$
Reasoning exactly as in the radial Lemma of Ni (see \cite{Ni}) then we get that 
\begin{equation}\label{stima-nb}
\widetilde v(r)\leq Cr^{\frac {2-M}2}
\end{equation} 
so that $\widetilde v(r)\to 0$ as $r\to +\infty$ since $M>2$. Let $r_0$ be a maximum point for $\widetilde v$ in $(1,+\infty)$. Integrating \eqref{trasf-kelv} in $(r_0,r)$ then we get
$$-r^{M-1}\widetilde v'(r)=A(\l,p)\int_{r_0}^r s^{(M-2)p-3}\widetilde v^p(s)\, ds.$$
Using estimate \eqref{stima-nb} we have 
\begin{eqnarray}
&&\Big| \int_{r_0}^r s^{(M-2)p-3}\widetilde v^p(s)\, ds\Big|\leq C \Big|  \int_{r_0}^r s^{\frac {M-2}2 p-3}\,ds\Big|.\label{stima-nb2}
\end{eqnarray}
This implies that 
\begin{equation}\label{stima-meglio}
|\widetilde v'(r)|\leq 
\left\{\begin{array}{ll}
Cr^{1-M} & \hbox{ when } p<\frac 4{M-2}\\
Cr^{1-M}\log r & \hbox{ when } p=\frac 4{M-2}\\
Cr^{-1-M+\frac{M-2}2 p}& \hbox{ when } p>\frac 4{M-2}
\end{array}\right.
\end{equation}
When $p<\frac 4{M-2}$  \eqref{stima-meglio} produces the optimal decay for $\widetilde v'(r)$. Otherwise, if $p\geq \frac 4{M-2}$ we need to repeat the procedure estimating again the integral in \eqref{stima-nb2} using \eqref{stima-meglio}. In any case after a finite number of steps we get that
\begin{equation}\label{stima-der-v}
|\widetilde v'(r)|\leq Cr^{1-M}
\end{equation}
and this implies that 
\begin{equation}\label{stima-v}
\widetilde v(r)\leq Cr^{2-M}.
\end{equation}
Turning back to the function $v$ then we get that 
$$v(r)\leq C \hbox{ in }[0,1].$$
Further, using the definition of $\widetilde v$ and estimates
\eqref{stima-der-v} and \eqref{stima-v} we have 
$$\lim_{r\to 0}r^{M-1}v'(r)=\lim_{r\to 0}-\frac 1r\widetilde v'\left(\frac 1r\right)+(2-M)\widetilde v(\frac 1r)=0$$
since $M>2$. Integrating equation \eqref{eqv} then we obtain that 
\begin{equation}\label{eq-int}
-r^{M-1}v'(r)=A(\l,p)\int_0^r s^{M-1}v^p(s)\, ds
\end{equation}
so that $v'(r)<0$ in $(0,1)$ and $\lim_{r\to 0}v(r)$ exists and it is finite showing that $v$ is continuous at the origin.
Using \eqref{eq-int} again we have
$$\lim_{r\to 0}v'(r)=-A(\l,p)\lim_{r\to 0}\frac {\int_0^r s^{M-1}v^p(s)\, ds}{r^{M-1}}=-A(\l,p)\lim_{r\to 0}\frac{r}{M-1}v^p(r)=0.$$
This shows that the transformed function $v(r)$ has to be a solution of \eqref{palla} since the constant $A(\l,p)$ can be merged into the equation. Theorem \ref{d1} then implies the existence and uniqueness of the radial solution. The final estimate follows inverting the transformation $\mathcal{L}_p$ and using the continuity of $v(r)$ in $0$.
\end{proof}
\begin{corollary}\label{C2}
We have that the radial solution $u_\l$ to  \eqref{i10} satisfies $u_\l(0)=0$ if $\l<0$.
\end{corollary}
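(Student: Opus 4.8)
The plan is to read off the claim directly from the asymptotic expansion \eqref{origin} established in Theorem \ref{d2}, which we may assume. The only point to check is the sign of the exponent $\frac{N-2}{2}(1-\nu_\l)$ appearing there.

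First I would observe that for $\l<0$ the definition \eqref{ni-l} gives $\nu_\l=\sqrt{1-\frac{4\l}{(N-2)^2}}>1$, since $-\frac{4\l}{(N-2)^2}>0$. Hence $1-\nu_\l<0$, and so the exponent $\frac{N-2}{2}(1-\nu_\l)$ is strictly negative (recall $N\geq 3$). Consequently $r^{\frac{N-2}{2}(1-\nu_\l)}\to+\infty$ as $r\to 0^+$.

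Next I would combine this with \eqref{origin}, which asserts $r^{\frac{N-2}{2}(1-\nu_\l)}u_\l(r)\to C$ with $C>0$ finite. Writing
$$
u_\l(r)=r^{\frac{N-2}{2}(\nu_\l-1)}\Big(r^{\frac{N-2}{2}(1-\nu_\l)}u_\l(r)\Big),
$$
the second factor converges to $C$ while the prefactor $r^{\frac{N-2}{2}(\nu_\l-1)}\to 0$ as $r\to 0^+$, because its exponent $\frac{N-2}{2}(\nu_\l-1)$ is now strictly positive. Therefore $u_\l(r)\to 0$ as $r\to 0^+$, i.e. $u_\l(0)=0$, which is the assertion.

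There is essentially no obstacle here: the content is entirely contained in Theorem \ref{d2}, and the corollary is just the translation of the limit \eqref{origin} into the value at the origin once one notes that the relevant power of $r$ blows up precisely when $\l<0$ (in contrast to the case $\l\geq 0$, where $\nu_\l\leq 1$ and the same computation shows $u_\l$ is unbounded near $0$, as remarked after the statement of Theorem \ref{d2}).
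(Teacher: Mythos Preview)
Your proof is correct and follows exactly the same approach as the paper: observe that $\nu_\l>1$ when $\l<0$ and then read off the conclusion from \eqref{origin}. The paper's proof is the two-line version of what you wrote.
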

\begin{proof}
It is enough to remark that $\nu_\l>1$ as $\l<0$. Then the claim follows by \eqref{origin}.
\end{proof}
In the rest of the section we will denote by $u_\l$ the unique radial solution to \eqref{i10} and by 
$$H=\left\{u\in H^1((0,1),r^{M-1}dr)\hbox{ such that }u(1)=0\right\}.$$
Set $v_\l(r)$ as in \eqref{i1} and $\L(\l)$ be as defined in \eqref{i12}. Although the embedding of $H\hookrightarrow L^2((0,1),r^{M-3}dr)$ is not compact, $\L(\l)$ is achieved. This is a consequence of Proposition \ref{d12-bis} in the Appendix, whose proof is basically the same of Proposition A.1 in \cite{GGN}. 
Then we have the following result:
\begin{corollary}\label{v4}
The first eigenvalue $\L(\l)$ defined in \eqref{i12} is achieved.
\end{corollary}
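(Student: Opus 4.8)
The plan is to reduce the statement to two ingredients: the strict negativity of $\L(\l)$, and the abstract attainment result of Proposition \ref{d12-bis} in the Appendix, which asserts that an infimum of the form \eqref{i12} is realized as soon as it is negative — the point being that negativity is exactly what rules out a minimizing sequence concentrating at the origin, where the embedding $H\hookrightarrow L^2((0,1),r^{M-3}dr)$ is not compact.

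The main step is therefore to produce a competitor in \eqref{i12} with negative Rayleigh quotient, and the natural candidate is $v_\l$ itself. First I would check that $v_\l$ is admissible: by \eqref{i11} one has $v_\l\in H^1((0,1),r^{M-1}dr)$ and $v_\l(1)=0$, hence $v_\l\in H$; since $v_\l$ is bounded on $[0,1]$ (Theorem \ref{d2}) and $M>2$, also $v_\l\in L^2((0,1),r^{M-3}dr)$ with $\int_0^1 v_\l^2 r^{M-3}>0$, because $v_\l>0$ on $(0,1)$. Testing the equation in \eqref{i11} against $v_\l$ and integrating by parts on $(0,1)$ — the boundary terms vanishing at $r=1$ since $v_\l(1)=0$ and at $r=0$ since $r^{M-1}v_\l'(r)v_\l(r)\to0$, by the behaviour of $v_\l$ near the origin in Theorem \ref{d2} — gives the identity $\int_0^1 |v_\l'|^2 r^{M-1}=A(\l,p)\int_0^1 v_\l^{p+1} r^{M-1}$. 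Plugging $w=v_\l$ into the numerator of \eqref{i12} then leaves $(1-p)A(\l,p)\int_0^1 v_\l^{p+1} r^{M-1}$, which is strictly negative because $p>1$ and $A(\l,p)=b^2>0$; dividing by the positive denominator yields $\L(\l)<0$.

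Finally I would invoke Proposition \ref{d12-bis} with weight $r^{M-1}$, potential $pA(\l,p)v_\l^{p-1}$ (bounded, again by Theorem \ref{d2}) and singular weight $r^{M-3}$: since $\L(\l)<0$, the infimum is attained, and it is this last passage that carries the only real difficulty. That difficulty is internal to Proposition \ref{d12-bis}, namely the lack of compactness of $H\hookrightarrow L^2((0,1),r^{M-3}dr)$, i.e. the possibility that a minimizing sequence loses mass near $r=0$; it is dealt with there through the weighted analysis of \cite{GGN2}, Section 2, along the lines of Proposition A.1 in \cite{GGN}, so that for the present corollary only the sign computation above is needed.
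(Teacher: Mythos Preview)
Your proof is correct and follows exactly the paper's approach: test \eqref{i12} with $w=v_\l$ to obtain $\L(\l)\le(1-p)A(\l,p)\frac{\int_0^1v_\l^{p+1}r^{M-1}}{\int_0^1v_\l^2r^{M-3}}<0$, then invoke Proposition \ref{d12-bis}. You have simply spelled out in more detail the admissibility of $v_\l$ and the integration by parts that the paper leaves implicit.
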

\begin{proof}
Since  $\L(\l)\le(1-p)\frac{\int_0^1v_\l^{p+1}r^{M-1}}{\int_0^1v_\l^2r^{M-3}}<0$  the claim follows by Proposition \ref{d12-bis}.
\end{proof}
As in the previous section we study the linearized operator at the solution $u_\l$ and 
we recall that $u_\l$ is non degenerate if the linear problem
\begin{equation}\label{lin-3}
\left\{\begin{array}{ll}
-\Delta w-\frac {\l}{|x|^2}w=pu_\l^{p-1}w & \hbox{ in }B_1\\
w\in H^1_0(B_1)
\end{array}\right.
\end{equation}
admits only the trivial solution. 
\begin{theorem}\label{d4}
Let $k\in \N$, $k\geq 1$ and $\l\leq \frac{(N-2)^2}4$. The linearized equation at the radial solution $u_\l$, i.e. equation \eqref{lin-3}, admits a solution if and only if $\l$ satisfies
\begin{equation}\label{d5}
-\L(\l)=\frac{16k(N-2+k)}{\left[(p-1)\left(2-N+\sqrt{(N-2)^2-4\l}\right)+4\right]^2},
\end{equation}
for some $k\geq 1$. Moreover 
the space of solutions of \eqref{lin-3} corresponding to a value of $\l$ which satisfies \eqref{d5} related to some $k$, has dimension $\frac{(N+2k-2)(N+k-3)!}{(N-2)!\,k!} $ and it is spanned by 
\begin{equation} \nonumber\label{d6}
Z_{k,i,\l}(x)=\frac1{|x|^\frac{a}b}\psi_1\left(|x|^\frac1b\right)Y_{k,i}(x)
\end{equation}
where $\psi_1$ is the positive eigenfunction associated to $\L(\l)$ and $\{Y_{k,i}\}$,\\ ${i=1,\dots,\frac{(N+2k-2)(N+k-3)!}{(N-2)!k!}}$, form a basis of $\mathbb{Y}_k(\R^N)$, the space of all homogeneous harmonic polynomials of degree $k$ in $\R^N$.\\
Finally, for every $k\geq 1$ there exists at least one value of $\l$ that satisfies \eqref{d5} and if $\l$ is not a solution to \eqref{d5} then the solution $u_\l$ is nondegenerate.
\end{theorem}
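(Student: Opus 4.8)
The plan is to carry the spherical--harmonic argument of Lemma~\ref{lemma-lin} over to the subcritical regime, letting the map $\mathcal{L}_p$ again reduce the linearization to a problem without the inverse--square singularity, and then to read off the degeneracy values of $\l$ from the weighted eigenvalue $\L(\l)$ of \eqref{i12}. First I decompose a solution $w\in H^1_0(B_1)$ of \eqref{lin-3} along spherical harmonics, $w(r,\theta)=\sum_{j\ge0}\psi_j(r)Y_j(\theta)$, so that, with $\mu_j=j(N-2+j)$ as in \eqref{mu-j}, each $\psi_j$ weakly solves
\[
-\psi_j''-\frac{N-1}{r}\psi_j'+\frac{\mu_j-\l}{r^2}\psi_j=pu_\l^{p-1}\psi_j\quad\text{in }(0,1),
\]
with $\psi_j(1)=0$, and $\psi_j'(0)=0$ if $j=0$ while $\psi_j(0)=0$ if $j\ge1$. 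Next I apply the change of unknown from \eqref{i1}, $\hat\psi_j(r)=r^{a}\psi_j(r^{b})$ with $a,b$ as in \eqref{k}--\eqref{alpha}. Using the algebraic identities $a(p-1)=2(b-1)$, $a(a+M-2)=b^{2}\l$ and $2a+M-2=b(N-2)$ that underlie Proposition~\ref{i2} (recall $A(\l,p)=b^{2}$, $M$ as in \eqref{i6}), a direct computation---formally the one that produces \eqref{3.5}---shows that $\hat\psi_j$ weakly solves
\[
-\hat\psi_j''-\frac{M-1}{r}\hat\psi_j'+\frac{A(\l,p)\mu_j}{r^{2}}\hat\psi_j=pA(\l,p)v_\l^{p-1}\hat\psi_j\quad\text{in }(0,1),\qquad\hat\psi_j(1)=0,
\]
where $v_\l=\mathcal{L}_p(u_\l)$ is the unique solution of \eqref{i11}. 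Equivalently, $-A(\l,p)\mu_j$ is an eigenvalue of the weighted problem $-\chi''-\frac{M-1}{r}\chi'-pA(\l,p)v_\l^{p-1}\chi=\sigma r^{-2}\chi$ on $\{\chi\in H^1((0,1),r^{M-1}dr):\chi(1)=0\}$, whose smallest eigenvalue is, by definition, $\L(\l)$.

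For $j=0$ one has $\mu_0=0$, so $\hat\psi_0$ solves the radial linearization of \eqref{i11}; by Theorem~\ref{d1} with $L=M$ the solution $v_\l$ is nondegenerate, hence $\psi_0\equiv0$. For $j\ge1$ the number $-A(\l,p)\mu_j$ is negative, so it has to be a \emph{negative} eigenvalue of the weighted problem. Since $v_\l$ has Morse index $1$ (Theorem~\ref{d1}), the quadratic form $\chi\mapsto\int_0^1(|\chi'|^2-pA(\l,p)v_\l^{p-1}\chi^2)r^{M-1}$ is negative definite on a one--dimensional subspace and on no two--dimensional one; as eigenfunctions of the weighted problem belonging to distinct eigenvalues are orthogonal in $L^2((0,1),r^{M-3}dr)$ and each negative eigenvalue produces a direction on which this form is negative, the weighted problem has exactly one negative eigenvalue. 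By Corollary~\ref{v4} it equals $\L(\l)$ (attained and simple, with positive eigenfunction $\psi_1$), and $0$ is not an eigenvalue because $v_\l$ is nondegenerate. Therefore, for $j\ge1$, \eqref{lin-3} has a solution with nonzero $j$--th block if and only if $-A(\l,p)\mu_j=\L(\l)$; substituting $A(\l,p)=\bigl(\tfrac{4}{(p-1)(N-2)(\nu_\l-1)+4}\bigr)^{2}$ (see \eqref{i5}), $\mu_j=j(N-2+j)$ and $(N-2)(\nu_\l-1)=2-N+\sqrt{(N-2)^2-4\l}$ turns this into exactly \eqref{d5} with $k=j$. Because $j\mapsto\mu_j$ is strictly increasing, for a fixed admissible $\l$ at most one $k$ satisfies \eqref{d5}; then $\hat\psi_k$ must be a multiple of $\psi_1$, so the solution space of \eqref{lin-3} is $\{|x|^{-a/b}\psi_1(|x|^{1/b})\,Q(x):Q\in\mathbb{Y}_k(\R^N)\}$, i.e.\ it is spanned by the $Z_{k,i,\l}$ and, by \eqref{dim-mu-j}, has dimension $\tfrac{(N+2k-2)(N+k-3)!}{(N-2)!\,k!}$. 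The last sentence of the theorem follows too: if \eqref{d5} fails for every $k$, every block vanishes and $u_\l$ is nondegenerate.

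It remains to find, for each $k\ge1$, at least one admissible $\l$ solving \eqref{d5}. Set $G_k(\l):=-\L(\l)-\mu_kA(\l,p)$ on $(-\infty,\tfrac{(N-2)^2}{4})$. The map $\l\mapsto A(\l,p)$ is explicit and continuous, and $\l\mapsto v_\l$ is $C^{1}$ (Remark~\ref{R1}), so $\l\mapsto\L(\l)$ and hence $G_k$ are continuous. Testing \eqref{i12} with $\chi=v_\l$ and using $\int_0^1|v_\l'|^2r^{M-1}=A(\l,p)\int_0^1v_\l^{p+1}r^{M-1}$ gives $\L(\l)\le(1-p)A(\l,p)\,\int_0^1v_\l^{p+1}r^{M-1}\big/\int_0^1v_\l^2r^{M-3}$; combined with the sharp Hardy inequality $\int_0^1v_\l^2r^{M-3}\le\tfrac{4}{(M-2)^2}\int_0^1|v_\l'|^2r^{M-1}$ this yields $-\L(\l)\ge(p-1)\tfrac{(M-2)^2}{4}$. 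Since $M-2=\tfrac{4(N-2)\nu_\l}{(p-1)(N-2)(\nu_\l-1)+4}\to\tfrac{4}{p-1}>0$ and $A(\l,p)\to0$ as $\l\to-\infty$, we get $G_k(\l)>0$ for $\l$ very negative. On the other hand, at $\l=0$ one has $A(0,p)=1$, $M=N$, and $-\L(0)<N-1\le\mu_k$, because for $\l=0$ the problem is the Lane--Emden equation on $B_1$, whose positive solution $u_0$ has Morse index $1$ and is nondegenerate (the case $\l\ge0$, see \cite{GNN},\cite{CG}); hence $G_k(0)<0$. By the intermediate value theorem $G_k$ vanishes at some $\l_k\in(-\infty,0)$, and one may take $I_k$ to be a connected neighbourhood of $\l_k$ on which $G_k$ still changes sign.

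The analytically delicate points all lie in the treatment of the weighted eigenvalue in a non--compact setting: that $\L(\l)$ is attained, hence genuinely an eigenvalue, rests on its strict negativity through Proposition~\ref{d12-bis}; the claim that there is \emph{exactly one} negative eigenvalue must be obtained variationally, as above, rather than from a spectral gap; and the continuity of $\l\mapsto\L(\l)$ needs care because the underlying space $H^1((0,1),r^{M-1}dr)$ itself varies with $\l$ (most cleanly handled by a preliminary rescaling normalising $M$). The deeper structural facts used---Morse index $1$ and nondegeneracy of $v_\l$, and Morse index $1$ of $u_0$---are imported from Theorem~\ref{d1} and the $\l\ge0$ theory of \cite{GNN},\cite{CG}.
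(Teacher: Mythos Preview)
Your proof is correct and follows essentially the same path as the paper's: spherical-harmonic decomposition, reduction via $\mathcal{L}_p$ to the weighted eigenvalue problem for $v_\l$, identification of the unique negative eigenvalue $\L(\l)$ through Theorem~\ref{d1}, and the intermediate-value argument between $\l=0$ and $\l\to-\infty$ based on testing with $v_\l$ and the Hardy inequality. The only minor difference is in justifying $-\L(0)<N-1$: you invoke the (full) Morse index $1$ and nondegeneracy of $u_0$, whereas the paper derives it by a Sturm-type comparison of the first weighted eigenfunction with $v_0'$ via the maximum principle.
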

\begin{proof}
The beginning of the proof is basically the same as Lemma \ref{lemma-lin}. Let $v$ be a solution to \eqref{lin-3} and decomposing as sum of spherical harmonics we reduce ourselves to study the following ODE,
\begin{align*}
\begin{cases}
-\psi_k''(r) - \frac{N-1}{r}\psi_k'(r) + \frac{\mu_k-\l}{r^2}\psi_k(r) = pu_\l^{p-1}(r)\psi_k(r)&
\text{in} \,\,(0,1) \\
\psi_k(1)=0 \ , \
\int_0^1r^{N-1}\left(\psi_k'(r)\right)^2\, dr<\infty
\end{cases}
\end{align*}
where $\mu_k=k(N-2+k)$. Setting again,
$\widehat \psi_k(r)=r^a\psi_k\left(r^b\right)$
we have that $\widehat \psi_k$ solves
\begin{align} \label{d8}
\begin{cases}
-\widehat \psi_k''(r) - \frac{M-1}{r}\widehat \psi_k'(r) + \frac{b^2\mu_k}{r^2}\widehat \psi_k(r) = pv_\l^{p-1}(r)\widehat\psi_k&
\text{in} \,\,(0,1) \\
\widehat\psi_k(1)=0 \ , \ \widehat\psi_k\in H
\end{cases}
\end{align}
Note that, since $v_\l$ is nondegenerate, from Theorem \ref{d1}, the previous problem cannot have solutions for $k=0$. So we assume that $k\ge1$.\\
By Theorem  \ref{d1}  we get that \eqref{d8} has a nontrivial solution belonging to the space $H$ if and only if $-b^2\mu_k=\L(\l)$ which is the unique negative eigenvalue. Moreover by Lemma \ref{B2} we get that $\widehat \psi_k\in L^\infty(0,1)$. Recalling \eqref{alpha} we get that equation \eqref{lin-3} admits a solution if and only if 
\begin{equation}\label{d9}
-\L(\l)=\frac{16k(N-2+k)}{\left[(p-1)\left(2-N+\sqrt{(N-2)^2-4\l}\right)+4\right]^2}
\end{equation}
for some $k\geq 1$.
Since the solution $u_\l$ is not explicitly known as in the previous section, we have to show that \eqref{d9} has at least a solutions. Let us consider the two {\em{limit}} cases $\l=0$ and  $\l=-\infty$. Note that by Remark \ref{R1} we derive that $\L(\l)$ is a continuous function of $\l$.
\\
{\em Case i)} $\l=0$.\par
First let us study the limit of the solution $v_\l$ to \eqref{i11}  as $\l$ goes to zero.  By the uniqueness result of Theorem \ref{d1} we have that $v_\l$ can be characterized as
\begin{equation} \nonumber
\inf\limits_{\int_0^1v(r)^{p+1}r^{M-1}=1}\int_0^1\left(v'(r)\right)^2r^{M-1}\quad v\in H.
\end{equation}
Then it is easy to see that $v_\l$ achieves this infimum and then $\int_0^1\left(v_\l'(r)\right)^2r^{M-1}\le C$ for some positive constant $C$ independent of $\l$. So $v_\l\rightharpoonup v_0$ where, from Remark \ref{R1}, $v_0$ satisfies
\begin{equation}\nonumber
\left\{\begin{array}{ll}
-v''-\frac{N-1}rv'=v^p & \hbox{ in }(0,1)\\
v>0& \hbox{ in }(0,1)\\
v'(0)=v(1)=0,
\end{array}\right.
\end{equation}
since $A(0,p)=1$. Then we get, for any $k\ge1$,
\begin{eqnarray}\label{d10}
&&\lim\limits_{\l\rightarrow0}\left(\L(\l)+\frac{16k(N-2+k)}{\left[(p-1)\left(2-N+\sqrt{(N-2)^2-4\l}\right)+4\right]^2}\right)=\nonumber \\
&&\inf\limits_{v\in H}\frac{\int_0^1\left(|v'|^2-pv_0^{p-1}v^2
\right)r^{N-1}}{\int_0^1v^2r^{N-3}}+k(N-2+k)>0\nonumber
\end{eqnarray}
because $\inf\limits_{v\in H}\frac{\int_0^1\left(|v'|^2-pv_0^{p-1}v^2
\right)r^{N-1}}{\int_0^1v^2r^{N-3}}>1-N$, comparing the eigenfunction which achieves $\L(0)$ with $v'_0$ and using the maximum principle.\\
{\em Case ii)} $\l=-\infty$.\par
For any $k\ge1$, testing $\L(\l)$ with $v_\l$ we get
\begin{equation}\label{d11}
\begin{split}
&\L(\l)+\frac{16k(N-2+k)}{\left[(p-1)\left(2-N+\sqrt{(N-2)^2-4\l}\right)+4\right]^2}\le\\
&(1-p)A(\l,p)\frac{\int_0^1v_\l^{p+1}r^{M-1}}{\int_0^1v_\l^2r^{M-3}}+o(1)
\end{split}
\end{equation}
for $\l$ large enough.
By \eqref{eqv} we have that
\begin{equation}\nonumber\label{d12}
\int_0^1|v_\l'|^2r^{M-1}=A(\l,p)\int_0^1v_\l^{p+1}r^{M-1}
\end{equation}
and using the Hardy inequality for radial function (see \cite{GP}),
\begin{equation}\nonumber\label{d13*}
\int_0^1v^2r^{M-3}\le\left(\frac{M-2}2\right)^2\int_0^1|v'|^2r^{M-1}
\end{equation}
we get that \eqref{d11} becomes
\begin{equation}\nonumber\label{d14}
\begin{split}
&\L(\l)+\frac{16k(N-2+k)}{\left[(p-1)\left(2-N+\sqrt{(N-2)^2-4\l}\right)+4\right]^2}\le\\
&\frac{1-p}{\left(\frac{M-2}2\right)^2}+o(1)=\frac{1-p}{\left(\frac{2}{p-1}\right)^2}+o(1)<0\quad\hbox{for $\l$ large enough.}
\end{split}
\end{equation}
By Cases i) and ii) we derive that, for any $k\ge1$, there exists at least one value of $\l$ which solves \eqref{d9}. 
\end{proof}

\begin{corollary}\label{c4.8}
The Morse index $m(\l)$ of $u_{\l}$ is equal to
$$m(\l)=
\sum_{0 \leq  j<\frac{2-N}2+\frac1 2 \sqrt{(N-2)^2-4\frac {\L(\l)}{b^2}}
\atop_{j\  integer}} \frac{(N+2j-2)(N+j-3)!}{(N-2)!\,j!}.$$
In particular, we have that $m(\l) \to+\infty$ as $\l \to-\infty.$
\end{corollary}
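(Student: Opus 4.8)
The plan is to mimic the computation of the Morse index carried out in Proposition~\ref{cor-1} for the critical case: decompose the linearized form along spherical harmonics, push each mode through the map $\mathcal L_p$, and read off the count from the one--dimensional non--degeneracy result Theorem~\ref{d1} and from the weighted eigenvalue $\L(\l)$ of \eqref{i12}. First I would recall that $m(\l)$ is the number of negative eigenvalues, counted with multiplicity, of the quadratic form
$$Q_\l(w)=\int_{B_1}\Big(|\na w|^2-\frac{\l}{|x|^2}w^2-pu_\l^{p-1}w^2\Big)\,dx,\qquad w\in H^1_0(B_1);$$
this is finite, since $\l<\tfrac{(N-2)^2}4$ makes the quadratic part coercive by Hardy's inequality, while \eqref{origin} together with $1<p<\tfrac{N+2}{N-2}$ gives $u_\l^{p-1}\in L^{N/2}(B_1)$, so that $pu_\l^{p-1}$ acts as a compact perturbation. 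Writing $w=\sum_{j\ge0}\psi_j(r)Y_j(\theta)$, the form splits orthogonally into $\sum_{j\ge0}Q_j(\psi_j)$, where $Q_j(\psi)=\int_0^1\big((\psi')^2+\tfrac{\mu_j-\l}{r^2}\psi^2-pu_\l^{p-1}\psi^2\big)r^{N-1}\,dr$ with $\mu_j=j(N-2+j)$ as in \eqref{mu-j}, acting on $\{\psi:\ \int_0^1((\psi')^2+\psi^2/r^2)r^{N-1}dr<\infty,\ \psi(1)=0\}$. Hence $m(\l)=\sum_{j\ge0}\tfrac{(N+2j-2)(N+j-3)!}{(N-2)!\,j!}\,m_j$, with $m_j$ the Morse index of $Q_j$ and $\tfrac{(N+2j-2)(N+j-3)!}{(N-2)!\,j!}$ the multiplicity \eqref{dim-mu-j}.

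Next I would transport $Q_j$ through the change of variable $\widehat\psi(r)=r^a\psi(r^b)$ used in Theorem~\ref{d4}. The algebra underlying Proposition~\ref{i2} and identity \eqref{i2b} shows that $\psi\mapsto\widehat\psi$ is a linear isomorphism from the form domain of $Q_j$ onto $\{\widehat\psi:\ \int_0^1((\widehat\psi')^2+\widehat\psi^2/r^2)r^{M-1}dr<\infty,\ \widehat\psi(1)=0\}$ which, up to a fixed positive multiplicative constant, carries $Q_j$ onto
$$\widehat Q_j(\widehat\psi)=\int_0^1\Big((\widehat\psi')^2+\frac{b^2\mu_j}{r^2}\widehat\psi^2-pA(\l,p)v_\l^{p-1}\widehat\psi^2\Big)r^{M-1}\,dr$$
(the cancellation of the powers of $r$ in the nonlinear term is precisely the computation behind \eqref{d8}), so $m_j$ equals the Morse index of $\widehat Q_j$. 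For $j=0$, $\widehat Q_0$ is the linearization of \eqref{i11} at $v_\l$, that is --- after absorbing $A(\l,p)$ into the equation --- of \eqref{palla} in the dimension $L=M$; since $1<p<\tfrac{M+2}{M-2}$ (shown in the proof of Theorem~\ref{d2}), Theorem~\ref{d1} gives $m_0=1$ and $v_\l$ non--degenerate. For $j\ge1$ one has $\widehat Q_j(w)=\widehat Q_0(w)+b^2\mu_j\int_0^1w^2r^{M-3}dr\ge\widehat Q_0(w)$, so $m_j\le m_0=1$ by monotonicity of eigenvalues; and by the definition \eqref{i12} of $\L(\l)$, $\widehat Q_0(w)\ge\L(\l)\int_0^1w^2r^{M-3}dr$, whence $\widehat Q_j(w)\ge(\L(\l)+b^2\mu_j)\int_0^1w^2r^{M-3}dr$. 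Therefore $m_j=0$ when $\L(\l)+b^2\mu_j\ge0$, whereas when $\L(\l)+b^2\mu_j<0$, testing $\widehat Q_j$ on the minimizer $\psi_1$ of \eqref{i12} --- which exists by Corollary~\ref{v4} --- gives $\widehat Q_j(\psi_1)=(\L(\l)+b^2\mu_j)\int_0^1\psi_1^2r^{M-3}dr<0$, so $m_j=1$.

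Assembling, $m(\l)=\sum_{j:\ \L(\l)+b^2\mu_j<0}\tfrac{(N+2j-2)(N+j-3)!}{(N-2)!\,j!}$, and since $\L(\l)+b^2\mu_j<0$ is equivalent to $j(N-2+j)<-\L(\l)/b^2$, i.e.\ to $j<\tfrac{2-N}2+\tfrac12\sqrt{(N-2)^2-4\L(\l)/b^2}$, and $A(\l,p)=b^2$ by \eqref{i5}, the displayed formula follows. For the divergence as $\l\to-\infty$ I would show $-\L(\l)/b^2\to+\infty$: testing \eqref{i12} with $w=v_\l$, using $\int_0^1|v_\l'|^2r^{M-1}=A(\l,p)\int_0^1v_\l^{p+1}r^{M-1}$ (from \eqref{i11}) and the weighted radial Hardy inequality (valid since $M>2$), one gets $\L(\l)\le(1-p)\frac{\int_0^1|v_\l'|^2r^{M-1}}{\int_0^1v_\l^2r^{M-3}}\le-\tfrac{(p-1)(M-2)^2}4$; since a direct computation from \eqref{i6} and \eqref{alpha} gives $\tfrac{M-2}b=(N-2)\nu_\l$, this yields $-\L(\l)/b^2\ge\tfrac{(p-1)(N-2)^2\nu_\l^2}4\to+\infty$.

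The main obstacle will be the bound $m_j\le1$: one has to verify carefully that $\mathcal L_p$ really sends $Q_j$ --- Hardy term $\mu_j/r^2$ included --- to the clean form $\widehat Q_j$, which is a consequence of the precise algebraic form of $a,b,M$ in \eqref{k}--\eqref{i6}, and then invoke Theorem~\ref{d1} in the generally non--integer dimension $M$ together with eigenvalue monotonicity under the non--negative perturbation $b^2\mu_j/r^2$. The accompanying analytic subtlety --- the non--compactness of $H\hookrightarrow L^2((0,1),r^{M-3}dr)$, which is what makes the identification of $m_j$ with a genuine eigenvalue count delicate --- is already neutralised by Corollary~\ref{v4} (Proposition~\ref{d12-bis}).
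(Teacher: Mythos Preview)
Your proof is correct and follows essentially the same route as the paper: spherical-harmonic decomposition of the linearised form, transport of each mode through $\mathcal L_p$ to the one–dimensional problem \eqref{d8}, and identification of the single negative direction per mode via Theorem~\ref{d1} together with the weighted eigenvalue $\L(\l)$. The paper phrases the count through the weighted eigenvalue problem (as in Corollary~\ref{d12-mi}) rather than directly through the quadratic forms $Q_j$, but this is only a difference in language. Your treatment of the divergence $m(\l)\to+\infty$ is in fact more precise than the paper's one-line ``$\L(\l)\to-\infty$'': what is actually needed is $-\L(\l)/b^2\to+\infty$, and your identity $(M-2)/b=(N-2)\nu_\l$ makes this explicit, while the paper relies implicitly on the estimate established in Case~ii) of Theorem~\ref{d4}.
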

\begin{proof}
Reasoning exactly as in the proof of Proposition \ref{cor-1} we consider the eigenvalue problem with weight and we call $\Gamma_i$ the corresponding eigenvalues. Then, we have that the linearized equation has a negative eigenvalue with weight $\Gamma_i$ if and only if 
$$\L(\l)=b^2\left(\Gamma_i-\mu_j\right)$$
for some $j\in \N$. So we have that the indexes $j$ which contribute to the Morse index of the solution $u_\l$ are those that satisfies
\begin{equation}\label{gamma_i}
\Gamma_i=\frac{\L(\l)}{b^2}+\mu_j<0
\end{equation}
for some $j\in \N$. This implies, recalling the value of $\mu_j$ that $j<\frac{2-N}2+\frac1 2 \sqrt{(N-2)^2-4\frac {\L(\l)}{b^2}}$. The claim follows recalling the dimension of the eigenspace of the Laplace-Beltrami operator related to $\mu_j$. The last assertion follows since $\L(\l)\to -\infty$ for $\l\to -\infty$.
\end{proof}
From Theorem \ref{d4} and Corollary \ref{c4.8} we have that if $\l^*$  satisfies \eqref{d5} and the function $\L(\l)+b^2\mu_k$ changes sign at the endpoints of a suitable  interval containing $\l^*$, then the Morse index of the radial solution $u_\l$ changes. This change in the Morse index is responsible of the bifurcation. From the continuity of $\L(\l)$ we know that there should exists at least one value $\l_k$ that satisfies \eqref{d5} for every $k\geq 1$ but since we do not know if the function $\L(\l)$ is analytic we cannot say that these values $\l_k$ are isolated. To overcame this problem, in the next Proposition we construct an interval $I_k=[\a_k,\b_k]$ which contains at least one of the points $\l_k$ that satisfies \eqref{d5} and at which the function $\L(\l)+b^2\mu_k$ changes sign, and such that the Morse index of the radial solution $u_\l$ at the value $\a_k$ and $\b_k$ differs from $\frac{(N+2k-2)(N+k-3)!}{(N-2)!\,k!} $ which is the dimension of the eigenspace of the Laplace Beltrami operator related to the eigenvalue $\mu_k$.  
\begin{proposition}\label{p4.6}
There exists a sequence $\l_k$ verifying
\begin{equation}
-\L(\l_k)=\frac{16k(N+k-2)}{\left[(p-1)\left(2-N+\sqrt{(N-2)^2-4\l_k}\right)+4\right]^2},
\end{equation}
and a sequence of intervals $I_k=[\a_k,\b_k]\subset(-\infty,0)$ with $\l_k\in I_k$ such that
\begin{equation}\label{f0a}
\L(\b_k)>-\frac{16k(N+k-2)}
{\left[(p-1)\left(2-N+\sqrt{(N-2)^2-4(\b_k)}\right)+4\right]^2},
\end{equation}
\begin{equation}\label{f0b}
\L(\a_k)<-\frac{16k(N+k-2)}{\left[(p-1)\left(2-N+\sqrt{(N-2)^2-4(\a_k}\right)+4\right]^2},
\end{equation}
and
\begin{equation}\label{f0c}
\L(\b_k)<-\frac{16h(N-2+h)}{\left[(p-1)\left(2-N+\sqrt{(N-2)^2-4\b_k}\right)+4\right]^2},
\end{equation}
for any $h< k$ while 
\begin{equation}\label{f0c*}
\L(\a_k)>-\frac{16j(N-2+j)}{\left[(p-1)\left(2-N+\sqrt{(N-2)^2-4\a_k}\right)+4\right]^2},
\end{equation}
for any $j>k$.
\end{proposition}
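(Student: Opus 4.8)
The plan is to reduce the statement to an elementary discussion of the continuous functions
\[
F_k(\l):=\L(\l)+b^2\mu_k,\qquad \mu_k=k(N-2+k),\qquad \l\in(-\infty,0),
\]
where $b=b(\l)$ is as in \eqref{alpha} (so that $-\L(\l)=b^2\mu_k$ is exactly the degeneracy condition \eqref{d5}); here $F_k$ is continuous because $\L$ is continuous, as noted in the proof of Theorem \ref{d4}. The input is entirely contained in that proof: Case i) there shows $F_k>0$ on a left neighbourhood of $0$, and Case ii) shows $F_k<0$ for $-\l$ large. Moreover, since $\mu_1<\mu_2<\cdots$ and $b^2>0$ one has the strict pointwise ordering $F_1<F_2<F_3<\cdots$ on $(-\infty,0)$; this is the feature that makes \eqref{f0c} and \eqref{f0c*} automatic once \eqref{f0a} and \eqref{f0b} are arranged.

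First I would set $\l_k:=\max\{\l<0:\ F_k(\l)=0\}$. This maximum is attained: the zero set is nonempty by the intermediate value theorem, it is closed, and it is bounded above by a point at which $F_k$ is still positive, so $\l_k\in(-\infty,0)$. By maximality $F_k$ has no zero in $(\l_k,0)$ and $F_k>0$ near $0$, hence $F_k>0$ on all of $(\l_k,0)$. I would then show $\l_{k+1}<\l_k$: by the pointwise ordering, $F_k(\l_{k+1})<F_{k+1}(\l_{k+1})=0$, so the intermediate value theorem (against the positivity of $F_k$ near $0$) gives a zero of $F_k$ strictly to the right of $\l_{k+1}$, forcing $\l_k>\l_{k+1}$.

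Finally I would choose the endpoints of $I_k$ near these zeros. Take $\b_k=\l_k+\e$ with $\e>0$ so small that $\b_k<0$ and, when $k\ge2$, $F_{k-1}(\b_k)<0$ — this is possible because $F_{k-1}(\l_k)<F_k(\l_k)=0$ and $F_{k-1}$ is continuous; then $F_k(\b_k)>0$ is \eqref{f0a}, and $F_h(\b_k)\le F_{k-1}(\b_k)<0$ for all $h<k$ is \eqref{f0c}. Take $\a_k=\l_{k+1}+\e'$ with $\e'>0$ so small that $\a_k<\l_k$ (possible since $\l_{k+1}<\l_k$) and $F_k(\a_k)<0$ (possible since $F_k(\l_{k+1})<0$); then $\a_k\in(\l_{k+1},0)$, so $F_{k+1}(\a_k)>0$ and hence $F_j(\a_k)\ge F_{k+1}(\a_k)>0$ for all $j>k$, which is \eqref{f0c*}, while $F_k(\a_k)<0$ is \eqref{f0b}. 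Since $\a_k<\l_k<\b_k$ we get $I_k=[\a_k,\b_k]\subset(-\infty,0)$ and $\l_k\in I_k$ with $F_k(\l_k)=0$; the case $k=1$ is identical with \eqref{f0c} vacuous. I do not expect a genuine obstacle; the only points requiring attention are that the rightmost zero $\l_k$ is really attained (closedness of the zero set together with the upper bound coming from the positivity of $F_k$ near $0$) and that the two limiting facts invoked are precisely Cases i) and ii) of Theorem \ref{d4}. The one subtlety worth recording is that $\L$ is only known to be continuous, so $F_k$ may a priori have infinitely many zeros — which is exactly why the construction must use the distinguished choice ``rightmost zero'' rather than ``the'' zero.
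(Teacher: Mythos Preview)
Your proof is correct and follows essentially the same approach as the paper's: the paper also defines $\l_k$ as the rightmost zero (it uses $\sup$ over $\{\l\le0:\ L(\l)=-16k(N-2+k)\}$ where $L(\l)=\L(\l)\cdot 16/b^2$, which is just your $F_k$ divided by $b^2$), proves $\l_{k+1}<\l_k$ via the same monotonicity-in-$k$ and intermediate value argument against Case~i), and builds the intervals from the limiting facts in Cases~i) and~ii) of Theorem~\ref{d4}. The only cosmetic differences are that the paper clears the denominator (working with $L$ rather than your $F_k$) and chooses the endpoints iteratively by setting $\b_{k+1}=\a_k$, whereas you pick $\b_k=\l_k+\e$ and $\a_k=\l_{k+1}+\e'$ independently; your use of the pointwise ordering $F_1<F_2<\cdots$ to get \eqref{f0c} and \eqref{f0c*} in one stroke is a slight streamlining of the paper's case-by-case check.
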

\begin{proof}
In order to simplify the notation we consider first the case $k=1$.
Set, for $\l\le0$,
\begin{equation}\nonumber
L(\l)=\L(\l)\left[(p-1)\left(2-N+\sqrt{(N-2)^2-4\l}\right)+4\right]^2
\end{equation}
and define $\l_1$ as
\begin{equation}\nonumber
\l_1=\sup\limits_{\l\le0}I_{1,\l}
\end{equation}
where
\begin{equation}\nonumber
I_{1,\l}=\left\{\l\le0\hbox{ such that }L(\l)=-16(N-1)\right\}.
\end{equation}
By cases i) and ii) in Theorem \ref{d4} we get that $I_{1,\l}\ne\emptyset$ and since $L$ is a continuous function we have that 
there exists $\l_1$ such that
\begin{equation}\nonumber
L(\l_1)=-16(N-1),
\end{equation}
and any other point $\l^*\ne\l_1$ which satisfies
\begin{equation}\nonumber
L(\l^*)=-16(N-1),
\end{equation}
must verify
\begin{equation}\nonumber
\l^*<\l_1.
\end{equation}
Analogously we define, for $k\ge2$
\begin{equation}\label{f1}
\l_k=\sup\limits_{\l\le0}I_{k,\l}
\end{equation}
where
\begin{equation}\nonumber
I_{k,\l}=\left\{\l\le0\hbox{ such that }L(\l)=-16k(N-2+k)\right\}.
\end{equation}
As in the previous case, using the proof of Theorem \ref{d4} we get that there exists $\l_k$ such that
\begin{equation}\nonumber
L(\l_k)=-16k(N-2+k),
\end{equation}
and $\l_k$ achieves \eqref{f1}.\par
Let us show that
\begin{equation}\nonumber
\l_{k+1}<\l_k\quad\hbox{for any }k\ge1.
\end{equation}
Since the function $16k(N-2+k)$ is strictly increasing in $k$ we cannot have that $\l_{k+1}=\l_k$. So by contradiction let us suppose that
\begin{equation}\nonumber
\l_{k+1}>\l_k,
\end{equation}
for some $k\ge1$. Then,
\begin{equation}\label{f2*}
L(\l_{k+1})=-16(k+1)(N-1+k)<-16k(N-2+k)
\end{equation}
and by case i) of the proof of Theorem \ref{d4} we have
\begin{equation}\nonumber
\begin{split}
&\lim\limits_{\l\rightarrow0}L(\l)=
\lim\limits_{\l\rightarrow0}\L(\l)\left[(p-1)\left(2-N+\sqrt{(N-2)^2-4\l}\right)+4\right]^2=\\
&(16+o(1))\lim\limits_{\l\rightarrow0}\L(\l)>-16(N-1)\ge-16k(N-2+k)
\end{split}
\end{equation}
by the intermediate value Theorem for continuous function we get that there exists $\tilde\l_k\ge\l_{k+1}$ such that
\begin{equation}\nonumber
L(\tilde\l_k)=-16k(N-2+k)
\end{equation}
and this is a contradiction with the definition of $\l_k$.\par
So we have shown that
\begin{equation}\nonumber
0>\l_1>\l_2>..>\l_k>..
\end{equation}
Now we prove the claim: again by case i) of Theorem 4.6, since
\begin{equation}\nonumber
\lim\limits_{\l\rightarrow0}L(\l)>-16(N-1)
\end{equation}
 we get that there exists $\l_1<\b_1<0$ such that
\begin{equation}\nonumber
L(\b_1)>-16(N-1)
\end{equation}
and this implies $\L(\b_1)>-\frac{16(N-1)} {\left[(p-1)\left(2-N+\sqrt{(N-2)^2-4(\b_1)}\right)+4\right]^2}$.  This proves \eqref{f0a}.\par 
On the  other hand, since by \eqref{f2*} we have that  $L(\l_2)=-32N<-16(N-1)=L(\l_1)$  then there exists $\l_2<\a_1<\l_1$ such that $L(\a_1)<-16(N-1)$, 
which implies $\L(\a_1)<-\frac{16(N-1)}
{\left[(p-1)\left(2-N+\sqrt{(N-2)^2-4(\a_1)}\right)+4\right]^2}.$  This proves \eqref{f0b}.\par
Finally since 
$\sup\limits_{k\ge2}\l_k=\l_2<\a_1$ we have that
$L(\a_1)>-32N\geq -16j(N-2+j)$ for any $j> 1$ so that \eqref{f0c*} follows.\\
Now we explain how to pass from $k=1$ to $k=2$. 
We take $\b_2=\a_1\in (\l_2,\l_1)$. Then 
from \eqref{f0b} and \eqref{f0c*} we have that
$$L(\b_2)=L(\a_1)>-32N$$
and
$$L(\b_2)=L(\a_1)<-16(N-2)$$
so that \eqref{f0a} and \eqref{f0c} follows for $k=2$.\\ 
From \eqref{f2*} we have that $L(\l_3)=-48(N+1)<-32N=L(\l_2)$. Then there exists $\l_3<\a_2<\l_2$ such that $-48(N+1)<L(\a_2)<-32N$ so that $L(\a_2)<-32N$ and this proves \eqref{f0b} for $k=2$. Finally by the choice of $\a_2$ we have $L(\a_2)>-48(N+1)\geq -16j(N-2+j)$ for any $j>2$ so that \eqref{f0c*} is proved for $k=2$. The general case can be carried out with the same proof. 
\end{proof}

As in Section \ref{s4} one can define the operator $T(\l,v)\,:\,(-\infty,0)\times H^1_0(B_1)\cap L^{\infty}(B_1)\longrightarrow H^1_0(B_1)\cap L^{\infty}(B_1)$ as $T(\l,v)=\left(-\Delta-\frac{\l}{|x|^2}I\right)^{-1}\left((v^+)^p\right)$ and look for zeros of $I-T(\l,v)$. Letting $X=H^1_0(B_1)\cap L^{\infty}(B_1)$ and reasoning as in the proof of Lemma \ref{l-T-ben-def}, we have that the operator $T$ is well defined from $(-\infty,0)\times X$ into $X$. $T$ is continous with respect to $\l$ and it is compact from $X$ into $X$ for any $\l\in(-\infty,0)$ fixed.\\
Moreover the linearized operator $I-T'(\l,u_{\l})I$ is invertible for any value of $\l$ which do not satisfy \eqref{d5}.\\
To prove the bifurcation we have to consider as in the previous section the suspace $\mathcal{H}$ of $X$ of functions which are $O(N-1)$-invariant and the subspaces $\mathcal{H}^h$ of $X$ of functions which are invariant by the action of $\mathcal{G}_h$.\\
Using these spaces by Theorem \ref{d4} and Proposition \eqref{p4.6} we deduce the following result
\begin{proposition}
For every $k\in \N$ the curve of radial solution $(\l,u_{\l})\in (-\infty,0)\times X$ contains a nonradial bifurcation point in the interval $I_k\times \mathcal{H}$, where $I_k$ is as defined in Proposition \eqref{p4.6}.\\
Moreover if $k$ is even, for every $h=1,\dots, \left[\frac N2\right]$ 
there exists a continuum  of nonradial solution bifurcating from $(\l,u_{\l})$ in the interval $I_k\times \mathcal{H}^h$.
\end{proposition}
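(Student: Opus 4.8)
The plan is to repeat, for problem \eqref{i10}, the degree-theoretic bifurcation argument carried out for the critical problem in Section~\ref{s4}; in the present subcritical situation it is actually shorter, because by Theorem~\ref{d1} the radial solution $u_\l$ is radially nondegenerate, so no Lyapunov--Schmidt projection is needed and one works directly with the operator $T(\l,v)=\left(-\Delta-\frac\l{|x|^2}I\right)^{-1}\!\big((v^+)^p\big)$. First I would note that, $u_\l$ being radial, both $v\mapsto(v^+)^p$ and the solution operator commute with the action of $O(N-1)$ — and of $\mg_h$ — so that $T(\l,\cdot)$ maps $\h$ into $\h$ and $\h^h$ into $\h^h$; together with the compactness and the continuity in $\l$ already recorded, this lets one run the whole argument inside a fixed symmetric subspace. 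The bifurcation will be produced by a sign change of the Leray--Schauder degree of $I-T(\l,\cdot)$ on a small ball around $u_\l$ as $\l$ crosses the interval $I_k=[\a_k,\b_k]$ supplied by Proposition~\ref{p4.6}.

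The next step is to evaluate the degree at the endpoints of $I_k$. Since the strict inequalities \eqref{f0a}--\eqref{f0b} hold at $\a_k$ and $\b_k$, neither of these $\l$'s solves \eqref{d5}, hence $I-T'_v(\l,u_\l)$ is invertible for $\l=\a_k,\b_k$, the degree $\deg\big(I-T(\l,\cdot),B_\rho(u_\l)\cap\h,0\big)$ is defined for $\rho$ small, and it equals $(-1)^{\b_\h(\l)}$, where $\b_\h(\l)$ counts, with multiplicity, the eigenvalues of $T'_v(\l,u_\l)$ in $(1,+\infty)$ restricted to $\h$ (cf.\ \cite[Theorem~3.20]{AM}). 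An argument parallel to Proposition~\ref{p2}, again simpler because there is no projection to carry along, identifies $\b_\h(\l)$ with the number of negative eigenvalues of $-\Delta-\frac\l{|x|^2}-pu_\l^{p-1}$ on $\h$, i.e.\ with the Morse index $m_\h(\l)$ of $u_\l$ in $\h$; likewise in $\h^h$.

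The heart of the matter is to read off the change of $m_\h$ across $I_k$ from Corollary~\ref{c4.8} and Proposition~\ref{p4.6}, and I expect this bookkeeping to be the main obstacle: $\L(\l)$ is known only to be continuous, not monotone, which is exactly why the intervals $I_k$ had to be constructed by hand in Proposition~\ref{p4.6}. By Corollary~\ref{c4.8}, $m_\h(\l)=\sum_j d_j^\h$, the sum over those $j\in\N$ with $\frac{\L(\l)}{b^2}+\mu_j<0$, where $d_j^\h$ is the dimension of the $O(N-1)$-invariant part of the $\mu_j$-eigenspace of $-\Delta_{S^{N-1}}$; by the results of Smoller and Wasserman \cite{SW86,SW90} one has $d_j^\h=1$ for all $j$, while $d_j^{\h^h}=1$ if $j$ is even and $d_j^{\h^h}=0$ if $j$ is odd. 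Using the four inequalities \eqref{f0a}, \eqref{f0b}, \eqref{f0c}, \eqref{f0c*}, together with $\L<0$ (Corollary~\ref{v4}) and the monotonicity of $\mu_j=j(N-2+j)$, one checks that the indices contributing to the sum are precisely $j=0,1,\dots,k-1$ at $\l=\b_k$ and precisely $j=0,1,\dots,k$ at $\l=\a_k$; hence $m_\h(\a_k)-m_\h(\b_k)=d_k^\h=1$, and the same computation gives a jump of $1$ in $\h^h$ when $k$ is even.

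To conclude: since $\b_\h(\a_k)-\b_\h(\b_k)=1$ is odd, $\deg\big(I-T(\a_k,\cdot),B_\rho(u_\l)\cap\h,0\big)=-\deg\big(I-T(\b_k,\cdot),B_\rho(u_\l)\cap\h,0\big)$, so by homotopy invariance $I-T(\l,\cdot)$ cannot stay nonzero on $\partial B_\rho(u_\l)\cap\h$ for all $\l\in I_k$; the standard local bifurcation argument (as in Section~\ref{s4}) then produces a bifurcation point $(\l_*,u_{\l_*})$ with $\l_*\in I_k$. The branching solutions are nonradial because $u_\l$ is radially nondegenerate throughout $I_k$: the $j=0$ mode of \eqref{lin-3} is, via $\mathcal L_p$, the linearization at $v_\l$, which has only the trivial solution by Theorem~\ref{d1}. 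Finally, upgrading the local statement to a continuum is carried out exactly as in \cite[Theorem~3.3]{G} (see also \cite{AM}) by the Rabinowitz global bifurcation theorem; performing this separately in each $\h^h$ when $k$ is even yields one continuum for every $h=1,\dots,\big[\frac N2\big]$, and these are mutually distinct since near the bifurcation point they carry distinct exact symmetry.
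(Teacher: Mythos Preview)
Your proposal is correct and follows essentially the same degree-theoretic route as the paper: work in the symmetric subspace $\h$ (resp.\ $\h^h$), note that the linearized operator is invertible at the endpoints $\a_k,\b_k$ by the strict inequalities \eqref{f0a}--\eqref{f0c*}, compute the Morse index there via Corollary~\ref{c4.8} and the Smoller--Wasserman one-dimensionality of the invariant harmonics to obtain $m_\h(\b_k)=k$, $m_\h(\a_k)=k+1$, and deduce bifurcation from the resulting sign change of the Leray--Schauder degree. The only cosmetic difference is that the paper phrases the homotopy-invariance step as a proof by contradiction, while you state it directly; your added remarks on why no projection $P_\l$ is needed and on the Rabinowitz continuum upgrade are accurate elaborations of what the paper leaves implicit.
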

\begin{proof}
The proof is by contradiction. We consider only the case of the space $\mathcal{H}$. The other case follows in a very similar way.\\  
Assume by contradiction that the curve $(\l,u_{\l})$ does not contain any bifurcation point in the interval $I_k\times \mathcal{H}$. Then there exists an $\e_0>0$ such that for
$\e\in (0,\e_0)$ and every $c\in (0,\e_0)$ we have
\begin{equation}\label{2.55}
v-T(\l,v)\neq 0,\quad \forall \l\in I_k, \forall v\in X\hbox{
  such that }0<\nor v-u_{\l}\nor_{X}\leq c.
\end{equation}
Let us consider the
set  $\mathcal{C}:=\{(\l,v)\in I_k\times  X\,:\, \nor v-u_{\l}\nor_X
<c\}$ and $ \mathcal{C}_{\l}:=\{v\in X\hbox{ such that } (\l,v)\in \mathcal{C} \}$.
From
(\ref{2.55}) it follows that there  exist no  solutions of $v-T(\l,v)=0$ on $\de
_{I_k\times X}\mathcal{C}$ different from the radial ones. By the homotopy invariance of
the degree, we get
\begin{equation}\label{2.66}
\mathit{deg} \left( I-T(\l,\cdot), \mathcal{C}_\l,0\right)\hbox{ is constant on }I_k.
\end{equation}
Moreover from \eqref{d5}, \eqref{f0a},  \eqref{f0b},  \eqref{f0c}, and  \eqref{f0c*} we have that the linearized  operator
$T'(\l,u_{\l})$ is invertible for  $\l=\a_k$ and
$\l=\b_k$. Then  
$$\mathit{deg} \left(I- T(\b_k ,\cdot),  \mathcal{C} _{ \b_k  },0\right)=(-1)^{m_{\mathcal{H}}(\b_k )}$$ 
and
$$\mathit{deg} \left(I- T(\a_k,\cdot),  \mathcal{C} _{ \a_k } ,0\right)=(-1)^{m_{\mathcal{H}}(  \a_k )}$$
where $m_{\mathcal{H}}(\l)$ denotes the Morse index of the radial solution $u_\l$ in the space $\mathcal{H}$. By the choice of the space $\mathcal{H}$ we know that the eigenspace of the Laplace Beltrami operator associated to $\mu_k$ is one-dimensional. Then, repeating the proof of Corollary \ref{c4.8} in the space $\mathcal{H}$ we have that 
\begin{equation}\nonumber
m_{\mathcal{H}}(\l)=
\left\{\begin{array}{ll}
1+\sup\{j\in \N\ \text{s.t. }\L(\l)+b^2\mu_j<0\} & \text{if } \l \text{ does not satisfy \eqref{d5}}\\
\\
\sup\{ j\in \N\ \text{s.t. }\L(\l)+b^2\mu_j<0\} & \text{if } \l \text{ satisfies \eqref{d5}}
\end{array}\right.
\end{equation}
Then, from \eqref{f0a}-\eqref{f0c*} we have that $m_{\mathcal{H}}(\b_k )=1+(k-1)=k$ and $m_{\mathcal{H}}(  \a_k )=1+k$
so that 
$$\mathit{deg} \left(I- T(\b_k,\cdot),  \mathcal{C} _{ \b_k},0\right)=-\mathit{deg} \left(I- T(\a_k,\cdot),  \mathcal{C} _{ \a_k} ,0\right)$$
contradicting \eqref{2.66}. Then, in the interval $I_k\times X$ there exists a bifurcation point for the curve $(\l,u_{\l})$ and the bifurcating solutions are nonradial since $u_\l$ is radially nondegenerate.
\end{proof}


\sezione{Appendix}
\begin{lemma}\label{l2}
Let $\l\in(-\infty,\frac{(N-2)^2}4)$. Then 
\begin{equation}\label{norm-equivalent}
\left(\int_{\R^N}|\na v|^2\, dx-\int_{\R^N}\frac {\l}{|x|^2}v^2\, dx\right)^{\frac 12}
\end{equation}
is a norm on $D^{1,2}(\R^N)$ which is equivalent to the standard one.
\end{lemma}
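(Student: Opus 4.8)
The plan is to deduce everything from the classical Hardy inequality
\[
\int_{\R^N}\frac{v^2}{|x|^2}\,dx\le\frac{4}{(N-2)^2}\int_{\R^N}|\na v|^2\,dx,\qquad v\in D^{1,2}(\R^N),
\]
together with the observation that the quadratic form $Q_\l(v):=\int_{\R^N}|\na v|^2-\l\int_{\R^N}\frac{v^2}{|x|^2}$ comes from the symmetric bilinear form $\langle u,v\rangle_\l:=\int_{\R^N}\na u\cdot\na v-\l\int_{\R^N}\frac{uv}{|x|^2}$, all integrals being finite on $D^{1,2}(\R^N)$ by Hardy.

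First I would establish a two-sided bound: there exist constants $0<c_\l\le C_\l$, depending only on $N$ and $\l$, with $c_\l\int_{\R^N}|\na v|^2\le Q_\l(v)\le C_\l\int_{\R^N}|\na v|^2$ for every $v\in D^{1,2}(\R^N)$. This is where the sign of $\l$ must be bookkept. If $0\le\l<\frac{(N-2)^2}4$, then dropping the nonnegative term gives $Q_\l(v)\le\int_{\R^N}|\na v|^2$, while Hardy gives $Q_\l(v)\ge\bigl(1-\tfrac{4\l}{(N-2)^2}\bigr)\int_{\R^N}|\na v|^2$, and the prefactor is strictly positive precisely because $\l<\frac{(N-2)^2}4$. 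If $\l<0$, then $-\l>0$, so $Q_\l(v)\ge\int_{\R^N}|\na v|^2$ at once, whereas Hardy gives $Q_\l(v)\le\bigl(1+\tfrac{4|\l|}{(N-2)^2}\bigr)\int_{\R^N}|\na v|^2$. In both cases one may take $c_\l=\min\{1,\,1-\tfrac{4\l}{(N-2)^2}\}>0$ and $C_\l=\max\{1,\,1-\tfrac{4\l}{(N-2)^2}\}$.

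It then remains only to verify that $\sqrt{Q_\l(\cdot)}$ is genuinely a norm. Homogeneity $Q_\l(tv)=t^2Q_\l(v)$ is immediate; positive definiteness follows from the lower bound, since $Q_\l(v)=0$ forces $\int_{\R^N}|\na v|^2\le c_\l^{-1}Q_\l(v)=0$, hence $v=0$ in $D^{1,2}(\R^N)$; and the triangle inequality is Cauchy--Schwarz for $\langle\cdot,\cdot\rangle_\l$, which is an inner product on $D^{1,2}(\R^N)$ because it is bilinear, symmetric and (by the lower bound just proved) positive definite. The equivalence with the standard norm $\bigl(\int_{\R^N}|\na v|^2\bigr)^{1/2}$ is then exactly the displayed two-sided estimate.

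As for difficulty: there is no serious obstacle — the statement is a direct corollary of Hardy's inequality. The only care needed is the sign analysis of $\l$, since the coercive constant is governed by $1-\tfrac{4\l}{(N-2)^2}$ when $\l\ge0$ but is simply $1$ when $\l<0$ (and dually for the upper bound), and the strict inequality $\l<\frac{(N-2)^2}4$ is used, and used only, to ensure $c_\l>0$.
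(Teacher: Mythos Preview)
Your proof is correct and follows exactly the approach indicated by the paper, which simply states that the result follows from the Hardy inequality by distinguishing the two cases $\l>0$ and $\l\le0$. You have merely spelled out the explicit constants $c_\l$, $C_\l$ and added the routine verification that $\sqrt{Q_\l(\cdot)}$ is indeed a norm coming from an inner product.
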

\begin{proof}
It follows by the Hardy inequality distinguishing the two different cases, $\l> 0$ and $\l\leq 0$.
\end{proof}
\begin{lemma}\label{l1}
Let $f(x)\in L^{\frac{2N}{N+2}}(\R^N)$ and let $\l\in(-\infty,\frac{(N-2)^2}4)$. Then the equation
\begin{equation}\label{eq-lineare}
-\Delta v- \frac{\l}{|x|^2}v=f\quad \hbox{ in }\R^N
\end{equation}
has a unique weak solution in $D^{1,2}(\R^N)$.
\end{lemma}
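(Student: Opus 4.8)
The plan is to read \eqref{eq-lineare} as the weak problem associated with a symmetric, bounded and coercive bilinear form on $D^{1,2}(\R^N)$, and then to invoke the Lax--Milgram theorem (equivalently, the Riesz representation theorem, the form being symmetric). Concretely, I would introduce
\[
a(u,v):=\int_{\R^N}\na u\cdot\na v\,dx-\int_{\R^N}\frac{\l}{|x|^2}\,uv\,dx ,\qquad u,v\in D^{1,2}(\R^N),
\]
and observe that, by Lemma \ref{l2}, for $\l<\frac{(N-2)^2}{4}$ the quantity $a(u,u)^{1/2}$ is a norm on $D^{1,2}(\R^N)$ equivalent to the standard one. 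Hence $a$ is symmetric, continuous, and coercive on $D^{1,2}(\R^N)$.

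Next I would check that the source term induces a bounded linear functional on $D^{1,2}(\R^N)$. Since $\frac{2N}{N+2}$ is the conjugate exponent of $2^{*}=\frac{2N}{N-2}$, Hölder's inequality together with the Sobolev inequality \eqref{i3a} gives, for every $v\in D^{1,2}(\R^N)$,
\[
\Big|\int_{\R^N}f v\,dx\Big|\le \|f\|_{L^{\frac{2N}{N+2}}(\R^N)}\,\|v\|_{L^{2^{*}}(\R^N)}\le C\,\|f\|_{L^{\frac{2N}{N+2}}(\R^N)}\,\|\na v\|_{L^{2}(\R^N)} .
\]
Thus $\varphi\mapsto\int_{\R^N}f\varphi\,dx$ belongs to the dual of $D^{1,2}(\R^N)$, and Lax--Milgram produces a unique $v\in D^{1,2}(\R^N)$ with $a(v,\varphi)=\int_{\R^N}f\varphi\,dx$ for all $\varphi\in D^{1,2}(\R^N)$, which is precisely the weak formulation of \eqref{eq-lineare}; uniqueness is part of the conclusion.

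There is essentially no obstacle here: the only point requiring care is the finiteness of the Hardy term in $a$ and of the duality pairing, and both are guaranteed by Lemma \ref{l2} and the Sobolev embedding already recorded. Equivalently, one may argue variationally by minimizing the strictly convex, coercive, weakly lower semicontinuous functional $J(v)=\tfrac12 a(v,v)-\int_{\R^N}f v\,dx$ over $D^{1,2}(\R^N)$: a minimizer exists by the direct method, is unique by strict convexity, and its Euler--Lagrange equation is exactly \eqref{eq-lineare}.
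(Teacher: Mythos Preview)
Your proof is correct and essentially the same as the paper's: the paper simply says that the result follows from the Hardy inequality and the coercivity of the functional $J(v)=\tfrac12\int_{\R^N}|\na v|^2-\tfrac12\int_{\R^N}\frac{\l}{|x|^2}v^2-\int_{\R^N}fv$, which is exactly the variational alternative you mention at the end. Your Lax--Milgram formulation and the paper's direct-method formulation are two equivalent ways to phrase the same coercivity/boundedness argument.
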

\begin{proof}
It follows by the Hardy inequality and the coercivity of the functional
$$J(v):=\frac 12 \int_{\R^N}|\na v|^2\, dx-\frac 12\int_{\R^N}\frac {\l}{|x|^2}v^2\, dx-\int_{\R^N}fv\, dx.$$
\end{proof}
Next we state the Pohozaev identity for a weak solution of
\begin{equation}\label{appendix}
-\Delta u=f(x,u) \quad \hbox{ in }\R^N
\end{equation}
\begin{lemma}
Let $u\in D^{1,2}(\R^N)$ be a weak solution of \eqref{appendix} and let $F(x,u)=\int_0^u f(x,t)\, dt$. Assume furthermore that $u\in L^{\infty}_{loc} (\R^N\setminus \{0\})$ and that $F(x,u), x\cdot F_x(x,u) \in L^1(\R^N)$, where $F_x$ is the gradient of $F$ with respect to $x$. Then $u$ satisfies
\begin{equation}\label{pohozaev}
\int_{\R^N}|\na u|^2-\frac {2N}{N-2}\int_{\R^N}F(x,u)-\frac 2{N-2}\int_{\R^N}x\cdot F_x(x,u)=0.
\end{equation}
\end{lemma}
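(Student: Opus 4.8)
The plan is to establish \eqref{pohozaev} by the classical Pohozaev computation: multiply \eqref{appendix} by the dilation field $x\cdot\na u$ and integrate over the annuli $A_{r,R}:=B_R\setminus B_r$, then let $r\to0^+$ and $R\to+\infty$ along suitably chosen sequences of radii. Since $x\cdot\na u$ is not in general an admissible test function in $D^{1,2}(\R^N)$ and $u$ may be singular at the origin, the argument must be localised away from $0$ and $\infty$. As a preliminary step I would observe that, because $u$ is a weak solution of \eqref{appendix} belonging to $L^\infty_{loc}(\R^N\setminus\{0\})$, standard elliptic regularity gives $u\in W^{2,q}_{loc}(\R^N\setminus\{0\})$ for every $q<\infty$ (hence $u\in C^{1,\a}_{loc}(\R^N\setminus\{0\})$), so that all the integrations by parts below are legitimate on every annulus $A_{r,R}$ with $0<r<R<\infty$.

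The heart of the proof consists of two identities on $A_{r,R}$ obtained from the divergence theorem. Denoting by $\nu$ the outer unit normal on $\de A_{r,R}$ and using $\na u\cdot\na(x\cdot\na u)=|\na u|^2+\tfrac12\,x\cdot\na(|\na u|^2)$, one gets
\[
\int_{A_{r,R}}(-\D u)\,(x\cdot\na u)=\Big(1-\frac N2\Big)\int_{A_{r,R}}|\na u|^2+\frac12\int_{\de A_{r,R}}(x\cdot\nu)|\na u|^2-\int_{\de A_{r,R}}\frac{\de u}{\de\nu}\,(x\cdot\na u).
\]
On the other side, since $f(x,u)\na u=\na\big(F(x,u(x))\big)-F_x(x,u)$,
\[
\int_{A_{r,R}}f(x,u)\,(x\cdot\na u)=\int_{\de A_{r,R}}(x\cdot\nu)\,F(x,u)-N\int_{A_{r,R}}F(x,u)-\int_{A_{r,R}}x\cdot F_x(x,u).
\]
Equating the two right-hand sides (the left-hand sides coincide because $u$ solves \eqref{appendix}), using $1-\tfrac N2=-\tfrac{N-2}2$, and multiplying through by $-\tfrac2{N-2}$, I obtain exactly the relation \eqref{pohozaev} with $\R^N$ replaced by $A_{r,R}$, plus error terms supported on $\de B_r\cup\de B_R$. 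On $\de B_\rho$ one has $x\cdot\nu=\pm\rho$ and $|x\cdot\na u|\le\rho|\na u|$, so all these error terms are bounded, up to a fixed constant, by $\rho\int_{\de B_\rho}\big(|\na u|^2+|F(x,u)|\big)$ evaluated at $\rho=r$ and $\rho=R$.

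The only genuinely delicate point is to kill these error terms along good sequences of radii. Writing $\Phi(\rho):=\rho\int_{\de B_\rho}\big(|\na u|^2+|F(x,u)|\big)\,d\sigma$, the coarea formula together with the hypotheses $|\na u|\in L^2(\R^N)$ and $F(x,u)\in L^1(\R^N)$ gives $\int_0^{\infty}\Phi(\rho)\,\rho^{-1}\,d\rho=\int_{\R^N}\big(|\na u|^2+|F(x,u)|\big)<\infty$; hence $\liminf_{\rho\to0^+}\Phi(\rho)=\liminf_{\rho\to+\infty}\Phi(\rho)=0$, and I may pick $r_n\to0^+$ and $R_n\to+\infty$ with $\Phi(r_n)\to0$ and $\Phi(R_n)\to0$. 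Along these sequences the boundary terms tend to $0$, while $\int_{A_{r_n,R_n}}|\na u|^2\to\int_{\R^N}|\na u|^2$, $\int_{A_{r_n,R_n}}F(x,u)\to\int_{\R^N}F(x,u)$ and $\int_{A_{r_n,R_n}}x\cdot F_x(x,u)\to\int_{\R^N}x\cdot F_x(x,u)$ by dominated convergence, using $|\na u|^2,\,F(x,u),\,x\cdot F_x(x,u)\in L^1(\R^N)$. Passing to the limit in the annular identity then yields \eqref{pohozaev}. I expect the radii-selection argument, together with the local regularity needed to justify the integrations by parts near the origin, to be the main technical obstacle; the rest is a routine computation.
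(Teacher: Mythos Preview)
Your proposal is correct and follows exactly the approach indicated in the paper: the paper's proof simply refers to Proposition~1 of \cite{BL} (the classical Pohozaev computation via multiplication by $x\cdot\nabla u$), noting only that one must integrate over annuli $B_R\setminus B_\rho$ because of the possible singularity at the origin, and that the extra term $x\cdot F_x(x,u)$ appears because $F$ depends on $x$. You have written out precisely this argument in detail, including the selection of good radii to dispose of the boundary terms, so there is nothing to add.
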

\begin{proof}
We can proceed exactly as in the proof of Proposition 1 of \cite{BL}. There are only two differences: one is the presence of the term $x\cdot F_x(x,u)$ and the  second one is that the solution $u\in L^{\infty}_{loc} (\R^N\setminus \{0\})$ and so we have to integrate \eqref{appendix} in $B_R\setminus B_{\rho}$. Anyway these terms can be handled exactly as in the proof of \cite{BL}.
\end{proof}
Here we prove some results that deal with the infimum \eqref{i12} and some other related results in the same spirit of what we proved in the Section 2 of \cite{GGN2}. 
\begin{proposition}\label{p12}
Let $\Omega\subset\R^N$ be a bounded domain with $0\in\Omega$. Moreover assume that 
\begin{equation}\label{d13}
\nu_1=\inf_{\eta\in H^1_0(\Om)\atop \eta\not\equiv0
}\frac{\int_\Omega|\nabla\eta|^2-\int_\Omega a(x)\eta^2}{\int_\Omega \frac{\eta^2}{|x|^2}}<0
\end{equation}
with $a(x)\in L^\infty(\Omega)$. Then $\nu_1$ is achieved. The function $\psi_1 \in H^1_0(\Omega)$ that achieves $\nu_1$ is  strictly positive in $\Omega\setminus\{0\}$  satisfies
\begin{equation}\label{AAAA}
\int_{\Omega}\nabla \psi_1\cdot \nabla \phi-a(x)\psi_1\phi\, dx=\nu_1\int_{\Omega}\frac{\psi_1\phi}{|x|^2}\, dx 
\end{equation}
for any $\phi\in H^1_0(\Omega)$, 
and the eigenvalue $\nu_1$ is simple.
\end{proposition}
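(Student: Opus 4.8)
The approach is the direct method of the calculus of variations; the one genuine difficulty is that the embedding $H^1_0(\Omega)\hookrightarrow L^2(\Omega,|x|^{-2}dx)$, though bounded by the Hardy inequality, is not compact, and the sign hypothesis $\nu_1<0$ is precisely what compensates for this loss of compactness.

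First I would take a minimizing sequence $\{\eta_n\}\subset H^1_0(\Omega)$ for \eqref{d13}, normalized by $\int_\Omega|x|^{-2}\eta_n^2=1$, so that $\int_\Omega|\nabla\eta_n|^2-\int_\Omega a\,\eta_n^2\to\nu_1$. Since $\Omega$ is bounded, say $\Omega\subset B_R$, one has $|x|^{-2}\ge R^{-2}$ on $\Omega$, hence $\int_\Omega\eta_n^2\le R^2$; as $a\in L^\infty(\Omega)$ this gives $\int_\Omega|\nabla\eta_n|^2=\nu_1+o(1)+\int_\Omega a\,\eta_n^2\le C$, so $\{\eta_n\}$ is bounded in $H^1_0(\Omega)$ (the same elementary estimate also shows $\nu_1>-\infty$). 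Up to a subsequence, $\eta_n\rightharpoonup\psi_1$ weakly in $H^1_0(\Omega)$, strongly in $L^2(\Omega)$ by Rellich, and a.e.

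Next I would check that $\psi_1$ attains $\nu_1$. Weak lower semicontinuity of the Dirichlet integral together with the $L^2$-convergence of the lower order term gives $\int_\Omega|\nabla\psi_1|^2-\int_\Omega a\,\psi_1^2\le\nu_1<0$; in particular $\psi_1\not\equiv0$, so $\int_\Omega|x|^{-2}\psi_1^2>0$, while Fatou's lemma in the singular weight yields $\int_\Omega|x|^{-2}\psi_1^2\le1$. Combining these with the definition of $\nu_1$,
\[
\nu_1 \le \frac{\int_\Omega|\nabla\psi_1|^2-\int_\Omega a\,\psi_1^2}{\int_\Omega|x|^{-2}\psi_1^2} \le \frac{\nu_1}{\int_\Omega|x|^{-2}\psi_1^2} \le \nu_1 ,
\]
where the last inequality uses $\nu_1<0$ and $\int_\Omega|x|^{-2}\psi_1^2\le1$. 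Hence every inequality is an equality, $\int_\Omega|x|^{-2}\psi_1^2=1$, and $\psi_1$ is a minimizer. This is exactly where $\nu_1<0$ is indispensable: when $\nu_1\ge0$ mass in the singular weight can escape in the limit and the infimum need not be attained. The weak equation \eqref{AAAA}, with Lagrange multiplier equal to $\nu_1$, then follows from the Lagrange multiplier rule, since $\phi\mapsto\int_\Omega|x|^{-2}\psi_1\phi$ is a bounded linear functional on $H^1_0(\Omega)$ by Hardy and Cauchy--Schwarz.

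Finally, positivity and simplicity. Replacing $\eta$ by $|\eta|$ changes none of the three integrals, so $|\psi_1|$ is again a minimizer and I may assume $\psi_1\ge0$; then $-\Delta\psi_1=(a+\nu_1|x|^{-2})\psi_1$ with a potential bounded on every compact subset of the connected open set $\Omega\setminus\{0\}$, so interior elliptic regularity and the Harnack inequality give $\psi_1>0$ on $\Omega\setminus\{0\}$. For simplicity I would work with the positive semidefinite quadratic form $B(\eta,\eta):=\int_\Omega|\nabla\eta|^2-\int_\Omega a\,\eta^2-\nu_1\int_\Omega|x|^{-2}\eta^2$, whose kernel is the linear subspace consisting of $0$ together with the minimizers, and each element of which solves \eqref{AAAA} (Cauchy--Schwarz for $B\ge0$). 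Since $B(w^+,w^-)=0$, one gets $B(w^+,w^+)=B(w^-,w^-)=0$ whenever $B(w,w)=0$, so a nontrivial kernel element cannot change sign and is therefore, up to sign, strictly positive on $\Omega\setminus\{0\}$. Given two nontrivial kernel elements $\psi_1,\psi_2$ and a fixed $x_0\in\Omega\setminus\{0\}$, the combination $w:=\psi_2(x_0)\psi_1-\psi_1(x_0)\psi_2$ is a kernel element of one sign vanishing at the interior point $x_0$, hence $w\equiv0$ by the strong maximum principle; thus $\psi_1$ and $\psi_2$ are proportional and $\nu_1$ is simple. I expect the attainment of the infimum --- upgrading $\int_\Omega|x|^{-2}\psi_1^2\le1$ to an equality despite the absence of compactness --- to be the only delicate point; everything else is routine once the sign condition $\nu_1<0$ is exploited as above.
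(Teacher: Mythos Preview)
Your proof is correct and follows essentially the same approach as the paper: the direct method of the calculus of variations, with the key observation that the sign condition $\nu_1<0$ is what recovers the minimizer despite the non-compactness of $H^1_0(\Omega)\hookrightarrow L^2(\Omega,|x|^{-2}dx)$. The only cosmetic difference is that the paper normalizes the minimizing sequence in $L^2(\Omega)$ rather than in $L^2(\Omega,|x|^{-2}dx)$, which forces it through a slightly longer $\limsup/\liminf$ manipulation, whereas your normalization gives the clean chain $\nu_1\le\frac{\text{num}}{\text{denom}}\le\frac{\nu_1}{\text{denom}}\le\nu_1$; for positivity and simplicity the paper simply cites \cite{GGN2}, while you spell out the standard Harnack/quadratic-form argument.
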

\begin{proof}
Let us consider a minimizing sequence $\eta_n\in H^1_0(\Om)$ for $\nu_1$, i.e.,
\begin{equation}\label{v1}
\frac{\int_\Omega|\nabla\eta_n|^2-\int_\Omega a(x)\eta_n^2}{\int_\Omega \frac{\eta_n^2}{|x|^2}}=\nu_1+o(1).
\end{equation}
Let us normalize $\eta_n$ such that 
\begin{equation}\label{v2}
\int_\Om\eta_n^2=1.
\end{equation}
 Then, since $\nu_1<0$,  by \eqref{v1} we get
\begin{equation}\label{3}
\int_\Omega|\nabla\eta_n|^2-\int_\Omega a(x)\eta_n^2\le0
\end{equation}
and then, since $a$ is bounded and \eqref{v2} we deduce from \eqref{3} that
\begin{equation}\label{4}
\int_\Omega|\nabla\eta_n|^2\le C\int_\Omega\eta_n^2\le C.
\end{equation}
Hence $\eta_n\rightharpoonup\eta$ weakly in $H^1_0(\Om)$ and then it holds,
\begin{equation}\label{5a}
\int_\Omega|\nabla\eta|^2\le\liminf_{n\rightarrow+\infty}\int_\Omega|\nabla\eta_n|^2
\end{equation}
\begin{equation}\label{5b}
\int_\Omega a(x)\eta_n^2\rightarrow\int_\Omega a(x)\eta^2.
\end{equation}
So we get
\begin{equation}
\int_\Omega|\nabla\eta|^2-\int_\Omega a(x)\eta^2\le\liminf_{n\rightarrow+\infty}\int_\Omega|\nabla\eta_n|^2-\int_\Omega a(x)\eta_n^2+o(1)
\end{equation}
which implies, since $\nu_1<0$, and $1=\int_\Omega\eta_n^2\le C\int_\Omega \frac{\eta_n^2}{|x|^2}$,
\begin{equation}\label{5}
\frac{\int_\Omega|\nabla\eta|^2-\int_\Omega a(x)\eta^2}{\limsup\limits_{n\rightarrow+\infty}\int_\Omega \frac{\eta_n^2}{|x|^2}}
\le\nu_1.
\end{equation}
Then elementary properties of lim inf and lim sup imply
\begin{equation}\label{55}
\limsup\limits_{n\rightarrow+\infty}\frac{\int_\Omega|\nabla\eta|^2-\int_\Omega a(x)\eta^2}{\int_\Omega \frac{\eta_n^2}{|x|^2}}\le\nu_1.
\end{equation}
Moreover, by Hardy's inequality
\begin{equation}
\int_\Omega \frac{\eta_n^2}{|x|^2}\le \frac{(N-2)^2}4\int_\Omega|\nabla\eta_n|^2\le C
\end{equation}
and so, by semicontinuity,
\begin{equation}\label{6a}
\int_\Omega\frac{\eta^2}{|x|^2}\le \liminf\limits_{n\rightarrow+\infty}\int_\Omega \frac{\eta_n^2}{|x|^2}.
\end{equation}
Hence, again using that $\nu_1<0$, we get from \eqref{5} that
\begin{equation}\label{6}
\int_\Omega|\nabla\eta|^2-\int_\Omega a(x)\eta^2<0.
\end{equation}
On the other hand, from \eqref{6a} we get
\begin{equation}\label{7}
\limsup\limits_{n\rightarrow+\infty}\frac1{\int_\Omega \frac{\eta_n^2}{|x|^2}}= \frac1{\liminf\limits_{n\rightarrow+\infty}\int_\Omega \frac{\eta_n^2}{|x|^2}} \le\frac1{\int_\Omega\frac{\eta^2}{|x|^2}}
\end{equation}
and then
\begin{equation}\label{8}
\frac{\int_\Omega|\nabla\eta|^2-\int_\Omega a(x)\eta^2}{\int_\Omega \frac{\eta^2}{|x|^2}}\le\liminf\limits_{n\rightarrow+\infty}\frac{\int_\Omega|\nabla\eta|^2-\int_\Omega a(x)\eta^2}{\int_\Omega \frac{\eta_n^2}{|x|^2}}.
\end{equation}
Finally by \eqref{5} we get
\begin{equation}
\frac{\int_\Omega|\nabla\eta|^2-\int_\Omega a(x)\eta^2}{\int_\Omega \frac{\eta^2}{|x|^2}}\le\nu_1
\end{equation}
which proves the first part of the proposition. The rest follows exactly as in proof of Proposition 2.1 of \cite{GGN2}
\end{proof}
The same result holds also if we minimize the quadratic form \eqref{d13} with some orthogonality conditions. To this end we say that $\psi$ and $\eta$ are orthogonal if they satisfy $\int_{\Omega}\frac{\psi \eta}{|x|^2}\, dx=0$. Indeed we have the following:
\begin{proposition}\label{d12-altri}
Let us assume $\Om$, $\nu_1$, $\psi_1$ and $a(x)$ as in  Proposition \ref{p12}. 
Then if we have that
\begin{equation}\label{9}
\nu_2=\inf_{\scriptstyle
\eta\in H^1_0(\Omega) ,\atop\scriptstyle
\eta\perp\psi_1}\frac{\int_\Omega|\nabla\eta|^2-\int_\Omega a(x)\eta^2}{\int_\Omega \frac{\eta^2}{|x|^2}}<0
\end{equation}
then $\nu_2$ is achieved. Moreover the functions $\psi_2\in H^1_0(\Omega)$ that attains $\nu_2$ satisfies
$$
\int_{\Omega}\nabla \psi_2\cdot \nabla 
\phi-a(x)\psi_2\phi\, dx=
\nu_2\int_{\Omega}\frac{\psi_2\phi}{|x|^2}\, dx $$
for any $\phi\in H^1_0(\Omega)$.\par
Similarly for $i=3,..,k$, if we have that
\begin{equation}
\nu_i=\inf_{\scriptstyle
\eta\in H^1_0(\Omega)  ,\atop\scriptstyle
\eta\perp span\left\{\psi_1,\psi_2,..,\psi_{i-1}\right\}}\frac{\int_\Omega|\nabla\eta|^2-\int_\Omega a(x)\eta^2}{\int_\Omega \frac{\eta^2}{|x|^2}}<0
\end{equation}
then $\nu_i$ is achieved and the functions $\psi_i\in H^1_0(\Omega)$ that attain $\nu_i$ satisfy
$$
\int_{\Omega}\nabla \psi_i\cdot \nabla 
\phi-a(x)\psi_i\phi\, dx=
\nu_i\int_{\Omega}\frac{\psi_i\phi}{|x|^2}\, dx $$
for any $\phi\in H^1_0(\Omega)$.
\end{proposition}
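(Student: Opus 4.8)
The plan is to run the variational argument of Proposition \ref{p12} on the constrained minimization set, and then to check separately that the weak limit of a minimizing sequence still satisfies the weighted orthogonality conditions. I would carry out the case of $\nu_2$ in full and obtain the cases $i=3,\dots,k$ by induction, since each of them has exactly the same structure once $\psi_1,\dots,\psi_{i-1}$ have been constructed and are known to solve their own eigenvalue equations.

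First I would pick a minimizing sequence $\eta_n\in H^1_0(\Om)$ for the infimum \eqref{9}, so that $\int_\Om\frac{\eta_n\psi_1}{|x|^2}=0$, normalized by $\int_\Om\eta_n^2=1$. Because $\nu_2<0$ one deduces $\int_\Om|\nabla\eta_n|^2-\int_\Om a(x)\eta_n^2\le 0$ for $n$ large and hence, $a$ being bounded, $\int_\Om|\nabla\eta_n|^2\le C$; thus up to a subsequence $\eta_n\rightharpoonup\psi_2$ weakly in $H^1_0(\Om)$, strongly in $L^2(\Om)$ and a.e.\ in $\Om$, and, by the Hardy inequality together with the a.e.\ convergence, also $\frac{\eta_n}{|x|}\rightharpoonup\frac{\psi_2}{|x|}$ weakly in $L^2(\Om)$. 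Since $\Om$ is bounded, $\int_\Om\frac{\eta_n^2}{|x|^2}$ stays bounded above (Hardy) and away from zero (because $\frac1{|x|^2}$ is bounded below on $\Om$ and $\int_\Om\eta_n^2=1$), so along a further subsequence $\int_\Om\frac{\eta_n^2}{|x|^2}\to\ell\in(0,+\infty)$.

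Next I would check that the orthogonality survives the limit: $\frac{\psi_1}{|x|}\in L^2(\Om)$ by Hardy, so the weak $L^2$ convergence of $\frac{\eta_n}{|x|}$ gives $\int_\Om\frac{\psi_2\psi_1}{|x|^2}=\lim_n\int_\Om\frac{\eta_n\psi_1}{|x|^2}=0$, i.e.\ $\psi_2\perp\psi_1$. Then, by weak lower semicontinuity of the Dirichlet integral, strong $L^2$-convergence for the lower order term $\int_\Om a(x)\eta_n^2$, and $\nu_2<0$, one gets $\int_\Om|\nabla\psi_2|^2-\int_\Om a(x)\psi_2^2\le\nu_2\ell<0$, so $\psi_2\not\equiv0$; combined with $\int_\Om\frac{\psi_2^2}{|x|^2}\le\ell$ this yields $\frac{\int_\Om|\nabla\psi_2|^2-\int_\Om a(x)\psi_2^2}{\int_\Om\frac{\psi_2^2}{|x|^2}}\le\nu_2$, and since $\psi_2$ is admissible in \eqref{9} the reverse inequality is automatic, so $\psi_2$ attains $\nu_2$. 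The weak Euler--Lagrange equation then follows, for test functions $\phi$ orthogonal to $\psi_1$, from the first variation of the Rayleigh quotient in \eqref{9}; for $\phi=\psi_1$ itself it is obtained by testing the equation \eqref{AAAA} for $\psi_1$ with $\psi_2$ and using $\psi_2\perp\psi_1$, and it then extends to all $\phi\in H^1_0(\Om)$ by linearity, splitting $\phi$ along $\psi_1$ and its weighted orthogonal complement. For $i\ge3$ the same argument applies verbatim with the finite-dimensional linear constraint $\eta\perp\mathrm{span}\{\psi_1,\dots,\psi_{i-1}\}$, each $\psi_j$ with $j<i$ being already known to satisfy its eigenvalue equation, so that the identity for $\psi_i$ again propagates to every test function.

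The main obstacle, exactly as in Proposition \ref{p12}, is the lack of compactness of the embedding $H^1_0(\Om)\hookrightarrow L^2(\Om,|x|^{-2}dx)$, which is why one cannot simply pass to the limit in the denominator and must instead exploit the sign assumption $\nu_i<0$; this part is essentially quoted from Proposition \ref{p12}. The one genuinely new verification here is the stability of the weighted orthogonality conditions under the weak convergence, which I expect to be routine once the convergence $\frac{\eta_n}{|x|}\rightharpoonup\frac{\psi_2}{|x|}$ in $L^2(\Om)$ has been recorded.
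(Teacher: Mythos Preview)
Your proposal is correct and follows essentially the same route as the paper, which simply says ``It is the same of the previous lemma'' and asserts that a minimizing sequence converges to a minimizer which solves the equation. You carry out precisely this minimizing-sequence argument from Proposition~\ref{p12} under the additional constraint, and you supply two details the paper leaves implicit: that the weighted orthogonality $\int_\Omega\frac{\eta_n\psi_1}{|x|^2}=0$ passes to the weak limit (via the weak $L^2$ convergence of $\eta_n/|x|$), and that the Euler--Lagrange equation, a priori valid only for test functions orthogonal to $\psi_1$, extends to all $\phi\in H^1_0(\Omega)$ by testing the equation for $\psi_1$ against $\psi_2$ to kill the Lagrange multiplier.
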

\begin{proof}
It is the same of the previous lemma. For any $i$ let us consider a
minimizing sequence $\eta_{i,n}\in H^1_0(\Omega)$  for $\nu_i$ . Then it converges to a function $\eta_i$
which achieves $\nu_i$ and that is a weak solution of the equation.
\end{proof}
Now we use the previous result to compute the Morse index of the radial solution $u_\l$ to \eqref{first}. We state the result in the case of $\Omega=B_1$. 
\begin{lemma}\label{B100}
Let $u_{\l}$ be a solution to \eqref{i10} whose Morse index is $M>0$. Then there exist exactly $M$ functions $\psi_i\in H^1_0(B_1)$ and $M$ numbers $\nu_i<0$ such that the problem
\begin{equation}\label{fin-fine}
\begin{cases}
-\Delta \psi_i-\frac{\l}{|x|^2}\psi_i-pu_{\l}^{p-1}\psi_i=\frac{\nu_i}{|x|^2}\psi_i, \ \text{in} \ \ B_1\setminus\{0\}\\
\psi_i\in H^1_0(B_1)
\end{cases}
\end{equation}
admits a weak solution. The functions $\psi_i$ can be taken in such a way they verify
\begin{equation}\label{perpendicolari}
\int_{B_1}\frac{ \psi_i\psi_j}{|x|^2}\, dx=0 \quad \text{ for }i\neq j.
\end{equation}
\end{lemma}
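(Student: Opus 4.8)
The plan is to identify the Morse index $M$ with the number of negative eigenvalues of the weighted linearization of \eqref{i10} at $u_\l$, and to produce these eigenvalues together with the functions $\psi_i$ by an iterative constrained minimization; the whole point is that the non‑compact embedding $H^1_0(B_1)\hookrightarrow L^2(B_1,|x|^{-2}dx)$ is compensated by negativity, which is precisely the content of Propositions \ref{p12} and \ref{d12-altri}. First I would reduce to that setting. Since the case of interest is $\l<0$, for which $u_\l\in L^\infty(B_1)$ by Theorem \ref{d2} and Corollary \ref{C2}, we set $a(x):=pu_\l^{p-1}(x)\in L^\infty(B_1)$. Writing the quadratic form attached to \eqref{lin-3},
$$
Q(\eta)=\int_{B_1}\Big(|\nabla\eta|^2-\frac{\l}{|x|^2}\eta^2-pu_\l^{p-1}\eta^2\Big),\qquad \eta\in H^1_0(B_1),
$$
one has $Q(\eta)=\big(\int_{B_1}|\nabla\eta|^2-\int_{B_1}a(x)\eta^2\big)-\l\int_{B_1}|x|^{-2}\eta^2$, so $Q$ and the form appearing in \eqref{d13} (with $\Omega=B_1$) have the same weighted eigenfunctions, with eigenvalues differing by the constant $-\l$: $\psi$ solves \eqref{fin-fine} with constant $\nu$ if and only if $\int_{B_1}(\nabla\psi\cdot\nabla\phi-a\psi\phi)=(\nu+\l)\int_{B_1}|x|^{-2}\psi\phi$ for every $\phi\in H^1_0(B_1)$, and for $\l<0$ one has $\nu<0\Rightarrow\nu+\l<0$, so the hypothesis of Propositions \ref{p12}–\ref{d12-altri} is met whenever the relevant infimum is negative.

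Next I would run the iteration. Set $\nu_1=\inf\{Q(\eta)/\int_{B_1}|x|^{-2}\eta^2:\ \eta\in H^1_0(B_1),\ \eta\ne0\}$; since $M>0$ there is $\varphi$ with $Q(\varphi)<0$, hence $\nu_1<0$, and Proposition \ref{p12} (applied to the shifted form) produces $\psi_1\in H^1_0(B_1)$, $\psi_1>0$, realizing $\nu_1$ and solving \eqref{fin-fine}. Inductively, given $\psi_1,\dots,\psi_{k-1}$, let
$$
\nu_k=\inf\Big\{\tfrac{Q(\eta)}{\int_{B_1}|x|^{-2}\eta^2}:\ \eta\in H^1_0(B_1),\ \eta\ne0,\ \int_{B_1}|x|^{-2}\eta\psi_j=0\ \ (j=1,\dots,k-1)\Big\};
$$
this is a finite real number, since $|x|\le1$ on $B_1$ gives $\int_{B_1}a\eta^2\le\|a\|_\infty\int_{B_1}\eta^2\le\|a\|_\infty\int_{B_1}|x|^{-2}\eta^2$, whence $Q(\eta)\ge-\|a\|_\infty\int_{B_1}|x|^{-2}\eta^2$. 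As long as $\nu_k<0$, Proposition \ref{d12-altri} yields $\psi_k$ realizing $\nu_k$, solving \eqref{fin-fine}, and automatically satisfying the orthogonality \eqref{perpendicolari}. The procedure stops at the first index with $\nu_k\ge0$; it must stop, for otherwise — using that, by the weak formulations and the orthogonality, the off–diagonal values $\int_{B_1}(\nabla\psi_i\cdot\nabla\psi_j-a\psi_i\psi_j)-\l\int_{B_1}|x|^{-2}\psi_i\psi_j$ vanish for $i\ne j$, so that $Q\big(\sum c_k\psi_k\big)=\sum c_k^2\,\nu_k\int_{B_1}|x|^{-2}\psi_k^2$ — the form $Q$ would be negative definite on subspaces of arbitrarily large dimension, contradicting the finiteness of $M$. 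Call $m$ the number of negative $\nu_k$ so constructed.

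Finally I would check $m=M$. The displayed identity for $Q\big(\sum c_k\psi_k\big)$ already shows that $Q$ is negative definite on $\mathrm{span}\{\psi_1,\dots,\psi_m\}$, so $M\ge m$. Conversely, if some subspace $W\subset H^1_0(B_1)$ with $\dim W=m+1$ had $Q|_W$ negative definite, then the linear map $W\to\R^m$, $\eta\mapsto\big(\int_{B_1}|x|^{-2}\eta\psi_k\big)_{k\le m}$, would have a nontrivial kernel, i.e. there would be $\eta_0\in W\setminus\{0\}$ weighted-orthogonal to $\psi_1,\dots,\psi_m$; this $\eta_0$ would satisfy simultaneously $Q(\eta_0)<0$ (as $\eta_0\in W$) and $Q(\eta_0)/\int_{B_1}|x|^{-2}\eta_0^2\ge\nu_{m+1}\ge0$ (as $\eta_0$ is admissible in the definition of $\nu_{m+1}$), a contradiction; hence $M\le m$. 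Thus $m=M$, the functions $\psi_1,\dots,\psi_M$ and the numbers $\nu_1,\dots,\nu_M<0$ are the asserted ones, pairwise weighted-orthogonal by construction, and since the next infimum is nonnegative there are no further negative weighted eigenvalues, so exactly $M$. I expect the main obstacle to be the absence of compactness, which rules out a routine spectral decomposition and forces one to build the eigenvalues one at a time and to lean on the negativity‑driven compactness of Propositions \ref{p12}–\ref{d12-altri}; a minor technical point is the elementary reduction (using $|x|\le1$ on $B_1$ and the boundedness of $u_\l$) that keeps the Rayleigh quotients bounded below and makes those propositions applicable.
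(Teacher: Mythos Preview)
Your argument is correct and matches the paper's intended proof: the paper simply defers to Lemma~2.6 of \cite{GGN2}, which is precisely the iterative constrained minimization you carry out using Propositions~\ref{p12} and~\ref{d12-altri}; your shift $\nu\mapsto\nu+\l$ to land in their hypotheses and the dimension-count identification $m=M$ are exactly what that reference supplies. The restriction to $\l<0$ (needed both for $u_\l\in L^\infty$ and for $\nu<0\Rightarrow\nu+\l<0$) is not part of the lemma's statement, but it is the only regime the paper actually uses downstream.
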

The proof follows exactly as in the proof of Lemma 2.6 in \cite{GGN2} and we do not report it.\\
The results of Propositions \ref{p12} and \ref{d12-altri} and Lemma \ref{B100} hold true also if we let $\Omega=\R^N$ and substitute $H^1_0(\Omega)$ with $D^{1,2}(\R^N)$. Then we can state the following:
\begin{corollary}\label{d12-mi}
The Morse index of the radial solution $u_\l$ of \eqref{1} 
is given by the number of negative values $\L_i$ such that the problem
\begin{equation}\label{ciclamino}
\left\{\begin{array}{ll}
-\Delta w-\frac {\l}{|x|^2}w-N(N+2)\nu_\l^2 \frac{|x|^{2 \nu_\l}} {\left(1+ |x|^{2\nu_\l}\right)^ 2}     w=\frac {\L_i}{|x|^2}w  & \hbox{ in }\R^N\\
w\in D^{1,2}\left(\R^N\right)
\end{array}\right.
\end{equation}
admits a weak solution, counted with their multiplicity.
\end{corollary}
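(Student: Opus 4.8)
This is the counterpart, for the whole space and the critical problem \eqref{1}, of Lemma \ref{B100}: as noted just before the statement, Propositions \ref{p12} and \ref{d12-altri} and Lemma \ref{B100} remain valid with $H^1_0(\Om)$ replaced by $D^{1,2}(\R^N)$. For completeness I would record the min--max argument that underlies them. Recall that the Morse index $m(\l)$ of $u_\l$ is the maximal dimension of a linear subspace of $D^{1,2}(\R^N)$ on which the second variation $Q_\l$ at $u_\l$ of the energy functional attached to \eqref{1} is negative definite. Writing $Q_\l(\cdot,\cdot)$ for the associated symmetric bilinear form and $\langle w,\varphi\rangle_*:=\int_{\R^N}\frac{w\varphi}{|x|^2}\,dx$ (a bounded positive definite form on $D^{1,2}(\R^N)$ by the Hardy inequality, Lemma \ref{l2}), the problem \eqref{ciclamino} is exactly the eigenvalue problem $Q_\l(w,\varphi)=\L\,\langle w,\varphi\rangle_*$ for all $\varphi\in D^{1,2}(\R^N)$, and the Rayleigh quotient $R_\l(w)=Q_\l(w,w)/\langle w,w\rangle_*$ is well defined on $D^{1,2}(\R^N)\setminus\{0\}$.

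First I would run the iterative minimisation. Set $\L_1=\inf R_\l$; if $\L_1<0$ it is attained, by the $D^{1,2}$ version of Proposition \ref{p12}, by some $\psi_1$ weakly solving \eqref{ciclamino} with $\L=\L_1$. Inductively, given $\psi_1,\dots,\psi_{i-1}$, set $\L_i=\inf\{R_\l(w):\ \langle w,\psi_j\rangle_*=0,\ j<i\}$; if $\L_i<0$, Proposition \ref{d12-altri} yields a minimiser $\psi_i$ weakly solving \eqref{ciclamino} with $\L=\L_i$, and the family $\{\psi_i\}$ is $\langle\cdot,\cdot\rangle_*$-orthogonal, hence linearly independent. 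Let $M\in\N\cup\{+\infty\}$ be the number of steps with strictly negative infimum; an eigenvalue of \eqref{ciclamino} of multiplicity $k$ shows up as $k$ consecutive $\L_i$ of equal value with mutually orthogonal eigenfunctions, so $M$ is the number of negative eigenvalues of \eqref{ciclamino} counted with multiplicity.

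Then I would prove $m(\l)=M$ in two steps. For $m(\l)\ge M$: on $\mathrm{span}\{\psi_1,\dots,\psi_M\}$ one has, for $w=\sum_{i=1}^M c_i\psi_i$, that $Q_\l(\psi_i,\psi_j)=\L_i\langle\psi_i,\psi_j\rangle_*$ vanishes for $i\ne j$, so $Q_\l(w,w)=\sum_i c_i^2\,\L_i\langle\psi_i,\psi_i\rangle_*<0$ unless $w=0$; if $M=+\infty$ this works on every finite subfamily and $m(\l)=+\infty=M$. For $m(\l)\le M$ with $M<+\infty$: if $V\subseteq D^{1,2}(\R^N)$ had $\dim V=M+1$ with $Q_\l$ negative definite on $V$, the linear map $V\ni w\mapsto(\langle w,\psi_1\rangle_*,\dots,\langle w,\psi_M\rangle_*)\in\R^M$ would have a kernel of dimension $\ge (M+1)-M=1$, and a nonzero $w_0$ in it would satisfy $R_\l(w_0)<0$ while being $\langle\cdot,\cdot\rangle_*$-orthogonal to all $\psi_j$, forcing $\L_{M+1}<0$ and contradicting the definition of $M$. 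Hence $m(\l)=M$, which is the assertion.

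The only genuinely delicate point is the attainment of each $\L_i$: since $D^{1,2}(\R^N)\hookrightarrow L^2(\R^N,|x|^{-2}dx)$ is not compact, it is not automatic, and it is exactly the strict negativity $\L_i<0$ that restores enough compactness of the minimising sequences, as carried out in the proofs of Propositions \ref{p12} and \ref{d12-altri}. Once that is granted, everything else is the elementary linear algebra and the min--max bookkeeping above.
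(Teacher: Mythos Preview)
Your proof is correct and follows the same approach as the paper, which simply invokes the $D^{1,2}(\R^N)$-analogue of Lemma \ref{B100} (and implicitly of Propositions \ref{p12} and \ref{d12-altri}); you have merely spelled out in detail the min--max bookkeeping that the paper leaves implicit. Your identification of the one genuinely delicate point---attainment of each $\L_i$ under the noncompact embedding, rescued by $\L_i<0$---matches the role of Propositions \ref{p12} and \ref{d12-altri} exactly.
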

\begin{proof}
Let $u_\l$ be the radial solution of \eqref{1}. 
Then we can use the analouguos of Lemma \ref{B100} in $\R^N$ to prove the claim.
\end{proof}
Finally the result of Proposition \ref{p12} can be used also to prove that the first eigenvalue with weight \eqref{i12} is attained. Indeed we have:
\begin{proposition}\label{d12-bis}
Assume that 
\begin{equation}\label{d19}
\Lambda_1=\inf_{\eta\in H^1((0,1),r^{M-1}dr), \eta(1)=0\atop\eta\not\equiv0
}\frac{\int_0^1 r^{M-1}|\eta'|^2\,dr-\int_0^1r^{M-1} a(r)\eta^2 \,dr}{\int_0^1 r^{M-3} \eta^2\, dr}<0.
\end{equation}
with $a\in L^\infty(0,1)$. Then $\L_1$ is achieved.
\end{proposition}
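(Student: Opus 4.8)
The plan is to adapt to this one-dimensional weighted setting the variational argument already used in the proof of Proposition~\ref{p12}: the embedding $H^1((0,1),r^{M-1}dr)\hookrightarrow L^2((0,1),r^{M-3}dr)$ fails to be compact, but the \emph{strict} negativity $\Lambda_1<0$ compensates for this.

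First I would fix a minimizing sequence $\eta_n\in H^1((0,1),r^{M-1}dr)$ with $\eta_n(1)=0$, normalized so that $\int_0^1 r^{M-1}\eta_n^2\,dr=1$. Since $\Lambda_1<0$ the numerator in \eqref{d19} is $\le0$ for $n$ large, and because $a\in L^\infty(0,1)$ this forces $\int_0^1 r^{M-1}|\eta_n'|^2\,dr\le\|a\|_{L^\infty}$; hence $\{\eta_n\}$ is bounded in $H^1((0,1),r^{M-1}dr)$, and up to a subsequence $\eta_n\rightharpoonup\eta$ weakly there. The compactness input I would use is that, for $M>2$, $H^1((0,1),r^{M-1}dr)$ embeds compactly into $L^2((0,1),r^{M-1}dr)$: away from the origin the weight lies between two positive constants, so this is the classical Rellich theorem, while uniform control of the $L^2(r^{M-1}dr)$-mass near $r=0$ along bounded sequences is precisely the kind of weighted estimate carried out in \cite{GGN2}, Section~2. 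Consequently $\eta_n\to\eta$ strongly in $L^2((0,1),r^{M-1}dr)$ and a.e.\ on $(0,1)$; in particular $\int_0^1 r^{M-1}\eta^2=1$ (so $\eta\not\equiv0$), $\int_0^1 r^{M-1}a\,\eta_n^2\to\int_0^1 r^{M-1}a\,\eta^2$, and near $r=1$ the convergence is even in $C$, so $\eta(1)=0$.

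Next I would pass to the limit exactly as in Proposition~\ref{p12}. By the Hardy inequality \cite{GP} the sequence $\int_0^1 r^{M-3}\eta_n^2$ is bounded above by $C\int_0^1 r^{M-1}|\eta_n'|^2\le C\|a\|_{L^\infty}$, and it is bounded below by $1$ since $r^{M-3}\ge r^{M-1}$ on $(0,1)$; set $D:=\limsup_n\int_0^1 r^{M-3}\eta_n^2\in[1,+\infty)$. Using weak lower semicontinuity of the Dirichlet term, the convergence of the $a$-term, the identity $\int_0^1 r^{M-1}|\eta_n'|^2-\int_0^1 r^{M-1}a\,\eta_n^2=(\Lambda_1+o(1))\int_0^1 r^{M-3}\eta_n^2$, and $\Lambda_1<0$, one obtains
\[
\int_0^1 r^{M-1}|\eta'|^2-\int_0^1 r^{M-1}a\,\eta^2\ \le\ \liminf_n\big(\Lambda_1+o(1)\big)\int_0^1 r^{M-3}\eta_n^2\ =\ \Lambda_1 D\ <\ 0 .
\]
In particular the left-hand side is negative; since also $\int_0^1 r^{M-3}\eta^2>0$ and $\int_0^1 r^{M-3}\eta^2\le D$ by Fatou, and $\Lambda_1 D<0$, dividing gives
\[
\frac{\int_0^1 r^{M-1}|\eta'|^2-\int_0^1 r^{M-1}a\,\eta^2}{\int_0^1 r^{M-3}\eta^2}\ \le\ \frac{\Lambda_1 D}{\int_0^1 r^{M-3}\eta^2}\ \le\ \Lambda_1 .
\]
Since $\eta$ is an admissible competitor in \eqref{d19} and $\Lambda_1$ is the infimum there, all inequalities are equalities and $\eta$ attains $\Lambda_1$.

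I expect the main obstacle to be exactly the compact embedding $H^1((0,1),r^{M-1}dr)\hookrightarrow L^2((0,1),r^{M-1}dr)$ near the origin — that is, excluding concentration of $L^2(r^{M-1}dr)$-mass at $r=0$ along the minimizing sequence — which is the content of the weighted-space analysis of \cite{GGN2}, Section~2. Everything else (extraction, lower semicontinuity, and the $\liminf$/$\limsup$ bookkeeping that exploits $\Lambda_1<0$) is identical to the proof of Proposition~\ref{p12} and to Proposition~A.1 of \cite{GGN}.
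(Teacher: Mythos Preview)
Your proposal is correct and follows essentially the same route as the paper, which simply says ``the claim follows as in the proof of Proposition~\ref{p12}''; you have written out in the one-dimensional weighted setting exactly the normalization, weak-limit, lower-semicontinuity and $\liminf/\limsup$ manipulation (crucially using $\Lambda_1<0$) that the paper carries out in Proposition~\ref{p12}. Your only overcautious point is the compact embedding $H^1((0,1),r^{M-1}dr)\hookrightarrow L^2((0,1),r^{M-1}dr)$: the Hardy inequality you already invoke gives $\int_0^\varepsilon r^{M-1}\eta_n^2\le\varepsilon^2\int_0^\varepsilon r^{M-3}\eta_n^2\le C\varepsilon^2$, so the mass near $r=0$ is uniformly small and no extra input from \cite{GGN2} is needed.
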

\begin{proof}
The claim follows as in the proof of Proposition \ref{p12}.
\end{proof}

\begin{lemma}\label{B2}
Let us consider a solution to
\begin{equation}\label{f1a}
\begin{cases}
-\psi'' - \frac{M-1}{r}\psi' +\beta^2\frac{\psi}{r^2}=h\psi, \quad \text{in} \ \ (0,1)\\
\psi(1)=0,\ \int_0^1 r^{M-1}(\psi')^2dr < \infty
\end{cases}
\end{equation}
with $h\in L^\infty(0,1)$ and $\beta\ne0$. Then $\psi\in L^\infty(0,1)$ and $\psi(0)=0$.
\end{lemma}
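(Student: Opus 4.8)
The plan is to reduce \eqref{f1a} to a perturbation of an autonomous linear ODE on a half--line via an Emden--Fowler change of variables, and then to read off the behaviour at the origin from the finite energy condition. Setting $t=-\log r\in(0,+\infty)$ and $w(t)=\psi(e^{-t})$, a direct computation turns \eqref{f1a} into
\[
-\ddot w+(M-2)\dot w+\beta^2 w=g(t)\,w\quad\text{in }(0,+\infty),\qquad g(t):=e^{-2t}h(e^{-t}),
\]
so that $|g(t)|\le\|h\|_{L^\infty(0,1)}\,e^{-2t}$, the condition $\psi(1)=0$ becomes $w(0)=0$, and the energy is carried exactly to $\int_0^1 r^{M-1}(\psi')^2\,dr=\int_0^{+\infty}e^{-(M-2)t}\dot w(t)^2\,dt<+\infty$. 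Removing the drift by $w=e^{ct}z$ with $c=\tfrac{M-2}2$, and writing $\omega:=\sqrt{c^2+\beta^2}$ (so $\omega>|c|$ since $\beta\neq0$), one gets
\[
\ddot z=\bigl(\omega^2-g(t)\bigr)z\quad\text{in }(0,+\infty),\qquad z(0)=0,\qquad \int_0^{+\infty}\bigl(\dot z+c z\bigr)^2\,dt<+\infty .
\]
Since $\psi(r)=r^{-c}z(-\log r)$ and, because $h\in L^\infty$, $\psi$ is smooth and bounded on every $[\varepsilon,1]$, the whole statement reduces to showing $z(t)=O(e^{-\omega t})$ as $t\to+\infty$: as $c<\omega$ this forces $\psi(r)\to0$ as $r\to0^+$, so $\psi$ extends continuously to $[0,1]$ with $\psi(0)=0$, hence $\psi\in L^\infty(0,1)$.

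I would obtain the decay of $z$ in three steps. \emph{Step 1 (rough a priori bound).} A Cauchy--Schwarz estimate on $w(t)=w(t_1)+\int_{t_1}^t\dot w$ using only the transformed energy gives $z(t)=O(e^{\kappa t})$ for some $\kappa<\omega$ (indeed $\kappa=0$ when $M\ge2$ and $\kappa=\tfrac{2-M}2$ when $M<2$; in all cases $\kappa<\min\{\omega,2\}$). \emph{Step 2 (killing the growing mode).} The function $S:=\dot z+\omega z$ solves $\dot S=\omega S-g z$, hence
\[
S(t)=e^{\omega t}\Bigl(S(0)-\int_0^t e^{-\omega s}g(s)z(s)\,ds\Bigr),
\]
and the integral converges as $t\to+\infty$ because $|g(s)z(s)|\le Ce^{(\kappa-2)s}$ and $\kappa<\omega+2$. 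If its limit $S_\infty$ were $\neq0$ then $S(t)\sim S_\infty e^{\omega t}$, and integrating $\dot z+\omega z=S$ with $z(0)=0$ would give $z(t)\sim\tfrac{S_\infty}{2\omega}e^{\omega t}$, contradicting Step 1; hence $S_\infty=0$, so $S(t)=e^{\omega t}\int_t^{+\infty}e^{-\omega s}g(s)z(s)\,ds$ and $|S(t)|\le Ce^{(\kappa-2)t}$. \emph{Step 3 (bootstrap).} Feeding this into $\dot z+\omega z=S$ and integrating improves the bound on $z$; iterating Steps 2--3 finitely many times yields $z(t)=O(e^{-\omega t})$, which completes the argument.

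The only genuinely delicate point is Step 2, namely ruling out the exponentially growing solution of the asymptotically autonomous equation $\ddot z=(\omega^2-g)z$, and this is precisely where the finite energy hypothesis is used: a component of $z$ of size $e^{\omega t}$ produces $\dot z+cz\sim(\mathrm{const})\,e^{\omega t}$, with constant $c+\omega\neq0$ (the positive root of $\mu^2-(M-2)\mu-\beta^2=0$), which makes $\int_0^{+\infty}(\dot z+cz)^2\,dt$ diverge. Everything else is routine bookkeeping. As an alternative to the whole scheme one may use the substitution $\psi=r^{\alpha}\phi$ with $\alpha$ the larger root of $\alpha^2+(M-2)\alpha-\beta^2=0$, which rewrites \eqref{f1a} as $-(r^{\widetilde M-1}\phi')'=h\,r^{\widetilde M-1}\phi$ with $\widetilde M=2+\sqrt{(M-2)^2+4\beta^2}>2$, and then run the Kelvin transform plus integration argument already used in the proof of Theorem \ref{d2}; the two routes are equivalent.
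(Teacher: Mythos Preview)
Your proof is correct and, at the structural level, coincides with the paper's: both isolate the indicial roots of the operator $-\partial_r^2-\tfrac{M-1}{r}\partial_r+\tfrac{\beta^2}{r^2}$, use the finite--energy hypothesis to exclude the component along the singular indicial solution, derive an integral (variation--of--constants) representation, and bootstrap. The difference is only in the packaging. The paper stays in the variable $r$, tests against the regular indicial solution $v(r)=r^\theta$ with $\theta=\tfrac{2-M+\sqrt{(M-2)^2+4\beta^2}}2$ (note $\theta=\omega-c$ in your notation), and arrives at the representation
\[
\frac{\psi(t)}{t^\theta}=\int_t^1\frac1{R^{2\theta+M-1}}\Big(\int_0^R s^{\theta+M-1}h(s)\psi(s)\,ds\Big)dR,
\]
after showing, via $\int_0^1\psi^2r^{M-3}<\infty$, that the boundary contributions $r_n^{\theta+M-2}\psi(r_n)$ and $r_n^{\theta+M-1}\psi'(r_n)$ vanish along a sequence $r_n\to0$; this last step is precisely your Step~2 ($S_\infty=0$) rewritten in $r$--coordinates, since the paper's quantity $R^{\theta+M-2}(-R\psi'+\theta\psi)$ is exactly your $e^{-\omega t}S$. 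Your Emden--Fowler route makes the mechanism more transparent by exhibiting the equation as an \emph{asymptotically autonomous} ODE on $(0,\infty)$, and your pointwise Step~1 bound replaces the paper's Cauchy--Schwarz use of $\int_0^1\psi^2r^{M-3}<\infty$ (itself a consequence of the energy via Hardy). The alternative you mention at the end --- the substitution $\psi=r^\alpha\phi$ reducing to a radial Laplacian in dimension $\widetilde M>2$ --- is a third equivalent dress for the same argument. One minor remark: for $M=2$ your Step~1 gives only $z=O(\sqrt t)$ rather than $O(1)$, but this is harmless since any $\kappa<\omega$ suffices for Step~2.
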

\begin{proof}
Let $\theta=\frac{2-M+\sqrt{(M-2)^2+4\beta^2}}2>0$.
Since $\int_0^1 r^{M-1}(\psi')^2dr<+\infty$ we get by \eqref{f1a} that $\int_0^1\psi^2r^{M-3}dr<+\infty$. Then there exists a sequence $r_n\rightarrow0$ such that
$r_n^{\theta+M-2}\psi(r_n)=o(1)$ as $n\rightarrow+\infty$ for any $\beta>0$. Such a sequence exists because, if not, we get $\psi(r)\ge\frac C{r_n^{\theta+M-2}}$ in a suitable neighborhood of $0$ and this contradicts that $\int_0^1\psi^2r^{M-3}dr<+\infty$ (note that we have used that $\theta>\frac{2-M}2$).\\
Let us observe that the function $v(r)=r^{\theta}$ satisfies
 \begin{equation}\label{z2-bis}
\begin{cases}
-v''-\frac{M-1}r v'+\frac{\beta^2}{r^2}v=0, \quad\hbox{in }(0,+\infty)\\
v(0)=0.
\end{cases}
\end{equation}
 From \eqref{z2-bis} and \eqref{f1a} we obtain, integrating on $(r_n,R)$,
 \begin{equation}\label{f2}
 \int_{r_n}^Rs^{\theta+M-1}h(s)\psi(s)ds=-R^{\theta+M-1}\psi'(R)+r_n^{\theta+M-1}\psi'(r_n)+\theta R^{\theta+M-2}\psi(R)-\theta r_n^{\theta+M-2}\psi(r_n)
\end{equation}
We claim that
\begin{equation}\label{f3}
r_n^{\theta+M-1}\psi'(r_n)=o(1)\ , \ \hbox{ as }n\to \infty
\end{equation}
Integrating \eqref{f1a} we get
\begin{equation*}
r_n^{\theta+M-1}\psi'(r_n)=O\left(r_n^{\theta}\right)+r_n^{\theta}
\int_{r_n}^1 s^{M-1}h(s)\psi(s)ds
-\beta^2r_n^{\theta}\int_{r_n}^1s^{M-3}\psi(s)ds=o(1)
\end{equation*}
since $r_n^{\theta}\int_{r_n}^1s^{M-3}\psi(s)\le r_n^{\theta} \left(\int_{r_n}^1\frac{\psi^2(s)}{s^2}s^{M-1}\right)^\frac12
\left(\int_{r_n}^1s^{M-3}\right)^\frac12=o(1)$ 
and this proves  \eqref{f3}. Hence  \eqref{f2} becomes
 \begin{equation}\label{f4}
 \int_{0}^Rs^{\theta+M-1}h(s)\psi(s)ds=-R^{\theta+M-1}\psi'(R)+\theta R^{\theta+M-2}\psi(R)
\end{equation}
Then we deduce that
 \begin{equation}\label{f5}
\frac{\psi(t)}{t^{\theta}}=\int_t^1\frac1{R^{2\theta+M-1}}
\left(\int_0^Rs^{\theta+M-1}h(s)\psi(s)ds\right)dR.
\end{equation}
Now, since $h\in L^\infty(0,1)$ we get
\begin{align}\label{B.12}
\left|\int_0^Rs^{\theta+M-1}h(s)\psi(s)\right|\le C\int_0^Rs^{\theta+\frac{M+1}2}\psi(s)s^\frac{M-3}2 & \le C \left(\int_0^Rs^{2\theta+M+1}\right)^\frac12
\left(\int_0^R\psi^2(s)s^{M-3}\right)^\frac12\nonumber \\
&\leq C R^{\theta+\frac{M+2}2}.
\end{align}
Finally \eqref{f5} becomes
\begin{equation}\label{f6}
\psi(t)=\begin{cases}O\left(t^{\theta}\right)&\hbox{ if }\theta<-\frac{M}2+3\\
O\left(t^{-\frac{M}2+3}\right)&\hbox{ if }\theta>-\frac{M}2+3\\
O\left(t^{-\frac{M}2+3}|\log t|\right)&\hbox{ if }\theta=-\frac{M}2+3
\end{cases}
\end{equation}
Then if $M<6$ \eqref{f6} implies that $\psi(0)=0$ which gives the claim.
If $M\ge6$, instead, we have $-\frac{M}2+3\leq0<\theta$ and so $\psi(t)=O\left(t^{-\frac{M}2+3}\right)$. Plugging this estimate in \eqref{B.12} we get
\begin{equation}\label{f7}
\left|\int_0^Rs^{\theta+M-1}h(s)\psi(s)\right|\leq C R^{\theta+\frac{M+6}2}.
\end{equation}
and \eqref{f5} becomes
\begin{equation}\label{f8}
\psi(t)=\begin{cases}O\left(t^{\theta}\right)&\hbox{ if }\theta<-\frac{M}2+5\\
O\left(t^{-\frac{M}2+5}\right)&\hbox{ if }\theta\ne-\frac{M}2+5\\
O\left(t^{-\frac{M}2+5}|\log t|\right)&\hbox{ if }\theta=-\frac{M}2+5
\end{cases}
\end{equation}
which gives the claim for $M<10$. Iterating the procedure in a finite number of steps we get that $\psi(t)=O\left(t^{\theta}\right)$ as $t\to 0$ so that $\psi(0)=0$. This ends the proof.
\end{proof}

\end{document}